\numberwithin{equation}{section}
\newtheorem{proposition}{Proposition}[section]
\newtheorem{theorem}[proposition]{Theorem}
\newtheorem{lemma}[proposition]{Lemma}
\theoremstyle{definition}
\newtheorem{definition}[proposition]{Definition}
\newtheorem{remark}[proposition]{Remark}
\newcommand{\N}{\mathbb{N}}
\newcommand{\R}{\mathbb{R}}
\newcommand{\eps}{\varepsilon}
\newcommand{\mylabel}[2]{#2\def\@currentlabel{#2}\label{#1}}
\newcommand{\hi}{$(1)$}
\newcommand{\hii}{$(2)$}
\newcommand{\hiii}{$(3)$}
\newcommand{\hiv}{$(4)$}
\newcommand{\hv}{$(5)$}
\newcommand{\hvi}{$(6)$}
\newcommand{\ci}{$(i)$}
\newcommand{\cii}{$(ii)$}
\newcommand{\ciii}{$(iii)$}
\newcommand{\ind}[1]{\mathds{1}_{#1}}
\title[On a degenerate second order traffic model]{On a degenerate second order traffic model: \\ existence of discrete evolutions, deterministic many-particle limit and first order approximation}
\author[D. Mazzoleni, E. Radici and F. Riva]{Dario Mazzoleni, Emanuela Radici and Filippo Riva}
\begin{document}
	
	\begin{abstract}
		We propose and analyse a new microscopic second order Follow-the-Leader type scheme to describe traffic flows. The main novelty of this model consists in multiplying the second order term by a nonlinear function of the global density, with the intent of considering the attentiveness of the drivers in dependence on the amount of congestion. 
		Such term makes the system highly degenerate; indeed, coherently with the modellistic viewpoint, we allow for the nonlinearity to vanish as soon as consecutive vehicles are very close to each other. We first show existence of solutions to the degenerate discrete system. We then perform a rigorous discrete-to-continuum limit, as the number of vehicles grows larger and larger, by making use of suitable piece-wise constant approximations of the relevant macroscopic variables.
		The resulting continuum system turns out to be described by a degenerate pressure-less Euler-type equation, and we discuss how this could be considered an alternative to the groundbreaking Aw-Rascle-Zhang traffic model.
		Finally, we study the singular limit to first order dynamics in the spirit of a vanishing-inertia argument. This eventually validates the use of first order macroscopic models with nonlinear mobility to describe a congested traffic stream. 
	\end{abstract}
	
	\maketitle
	
	\noindent
	{\footnotesize \textbf{2020 AMS-Subject Classification: 
		}76A30, 34A12, 35Q70, 90B20, 34E15}.\\
	{\footnotesize \textbf{Keywords:} Degenerate ODEs, second order traffic model, Follow-the-Leader, many-particle limit, congested dynamics.}
	
	\tableofcontents
	
	\section{Introduction}
	The study of mathematical models for traffic flows has considerably spread in the recent decades due to the twofold need of preventing vehicular congestion on road networks and, at the same time, managing the increasing demand for mobility. 
	Moreover, these models contain several challenging and interesting mathematical issues to tackle.
	For a general introduction to the literature on this topic we refer to the books \cite{GaravelloPiccoliBook,HabermannBook,RosiniBook}
	and to the survey papers \cite{Helbing,PiccoliTosin}. We now present a non exhaustive overview of those articles related to the features more closely linked to this work.
	
	\subsection{Overview of the literature and of the existing models}
	
	Mathematical models for traffic can be classified according to several features which become relevant or not depending on the envisaged application.  Among such features, we mention the level of description of the system (microscopic, mesoscopic, macroscopic) and the representation of the process (deterministic, stochastic). 
	Deterministic microscopic models give a detailed description of the traffic flow and characterise individually each vehicle and its interaction with the other vehicles and the environment. However, they are computationally too costly when the number of vehicles is big and do not provide insights into the relevant features of the traffic flow. Deterministic macroscopic models, on the other hand, describe traffic as a flow without distinguishing the constituent vehicles. 
	They are computationally less demanding as they involve a limited number of variables (density, velocity and flow). Moreover, they are very suited for analysing macroscopic characteristics of vehicular traffic, like shock waves and queue lengths.
	
	Macroscopic models further split into the two main subcategories of \emph{equilibrium} and \emph{non-equilibrium} models, depending on the relation between the velocity and the density of vehicles. 
	In equilibrium models (or \emph{first-order} models) the velocity is expressed as a function of the density, while in non-equilibrium ones (or \emph{second-order} ones) density and velocity are coupled through a partial differential equation. 
	First-order models should be preferred when describing applications where the shock structures are irrelevant, because they do not take into account the distribution of the desired velocities across vehicles and, therefore, are not able to predict many traffic instabilities (e.g. stop \& go waves, hysteresis phenomena, phantom jams, etc). Among first-order models we recall the Lighthill-Whitham-Richards (LWR) model \cite{LighthillWhitham,Richards}, which represents the starting point for the modelling of vehicular flows, and its extension to road networks as introduced in~\cite{HilligesWeidlich,HoldenRisebro} and further studied in \cite{BrianiCristiani}.
	The second-order model which is currently the most used is called Aw-Rascle-Zhang (ARZ) and has been introduced independently by Aw and Rascle \cite{AwRascle} and Zhang \cite{Zhang}. It has been proposed to solve the inconsistencies (pointed out in \cite{Daganzo}) of a previous second-order model by Payne \cite{Payne} and Whitham \cite{Whitham}.
	We mention that macroscopic models for traffic are often treated in the fluid dynamics framework, as the traffic stream can be regarded as a fluid flow, see for instance~\cite{Zatorskaetal1,Zatorskaetal2}.
	
	The present work is more concerned with the link between microscopic and macroscopic models.
	Such connection for the LWR model has been well established in a series of works \cite{CardaliaguetForcadel,ColomboRossi,DiFrancescoRosini,GoatinRossi,HoldenRisebro2, HoldenRisebro3,Rossi} in the framework of many-particle limits of first-order Follow-the-Leader models. 
	Second-order Follow-the-Leader type models, instead, have been investigated by 
	Aw et al. \cite{AwKlarMaterneRascle}, Greenberg \cite{Greenberg} and Di Francesco et al. \cite{DiFrancescoFagioliRosiniTRAFFIC} in connection with the ARZ model. However, these latter results all deal with the simplified homogeneous version of the ARZ model (see Section~\ref{sec:ARZ} for more details), thus leaving the many-particle derivation of the complete nonlinear model still an open question. We also mention that modifications of the homogeneous ARZ model have been considered in the literature, for example the \emph{generalised} Aw-Rascle-Zhang model (GARZ) introduced in \cite{FanHertySeibold}. A second-order scheme has been proposed in \cite{ChiarelloFriedrichGoatinGoettlich} to study the micro-macro limit for GARZ.

	We conclude this overview by mentioning related works on the extensions of the aforementioned models to road networks and to multi-scale analysis. 
	Among multi-scale models, which couples the traffic description at different scales separated by an interface that can be either fixed or solution-dependent, we mention \cite{LattanzioPiccoli} for a second-order micro and macro coupling and \cite{CristianiIacomini} for second-order micro and first-order macro one. 
	On the other hand, many-particle limits for road networks traffic flows were considered in \cite{CristianiSahu} in the case of first order Follow-the-Leader schemes.
	Consistently, as observed in \cite{HilligesWeidlich} and further investigated in \cite{BrettiBrianiCristiani,BrianiCristiani}, the corresponding macroscopic models appear to be suitable extensions of the LWR model to networks.

	\subsection{A new second order model}
	In the present work we propose a novel second-order deterministic individual-based model to describe traffic dynamics with congestion. 
	First, we prove an existence result for a new microscopic scheme, then we study the convergence, as the number of individuals grows larger and larger, towards a second-order macroscopic model resulting in a hydrodynamic pressure-less Euler-type system. Finally, we investigate the passage to the first-order dynamics in the spirit of a vanishing-inertia asymptotics. This aims at validating the use of first-order schemes, reducing the level of complexity of the problem still being close to the original system.

	Our modellistic starting point lies in the observation that the drivers' reaction-time to inputs should be directly proportional to the local amount of congestion. 
	We postulate that the drivers' attention is higher in overcrowded regions while, in areas of lower density (and thus also of lower danger) the level of alertness may decrease. 
	
	Precisely, we consider a population of $N+1$ (thinking) indistinguishable individuals (vehicles), located at positions $x_0(t), \ldots, x_N(t)$ along the evolution, moving in one fixed direction according to the following system of second-order ordinary differential equations
	\begin{equation}\label{eq:intro schema}
		\eps \zeta(\rho_i(t))\ddot{x}_i(t) + \gamma\dot{x}_i(t) = \vartheta(\rho_i(t))F(t,x_i(t)) ,\qquad t\in(0,T),
	\end{equation}
	which we will show to fall in the class of Follow-the-Leader type models (that is, when the traffic dynamics is governed by the interaction between a vehicle and the vehicle immediately in front).
	
	The right-hand side of~\eqref{eq:intro schema} represents the forcing term, and is the product of two quantities: a congestion function $\vartheta$ and a drift $F$. The simplest, though most popular, choice for drifts in traffic flow models is $F\equiv 1$, modelling a free road with speed limit normalised to 1. The (possibly nonlinear) function $\vartheta$ describes the amount of congestion and is evaluated at $\rho_i(t)$, representing a local reconstruction of the global particle density at $x_i(t)$. 
	We consider approximations $\rho_i$ of the form 
	\begin{equation}\label{eq:rhoi}
		\rho_i(t) = \frac{1}{N(x_{i+1}(t) - x_i(t))},
	\end{equation}
	namely proportional to the inverse of the distance between consecutive vehicles. Indeed, according to the one-directional motion of vehicular flows on a line, the traffic density detected by the $i$-th particle depends on its proximity to the particle ahead. 
	Hard congested traffic models are characterised by a threshold density above which the particles are stuck and cannot move anymore. This feature is encoded in~\eqref{eq:intro schema} by requiring $\vartheta$ 
	to be nonincreasing and compactly supported on $[0,\bar{\rho}_\vartheta]$; due to \eqref{eq:rhoi}, one may interpret the constant $\bar{\rho}_\vartheta$, here called hard congestion threshold, as the renormalised inverse of the vehicles' lenght (see also \eqref{eq:lenght}). We emphasize that the monotonicity of $\vartheta$ attenuates the effect of the drift $F$ as the density increases. 
	Typical examples of $\vartheta$ appearing in traffic flows \cite{LighthillWhitham}, pedestrian flows \cite{DiFrancescoMarkowichPietschmannWolfram}, animal swarming \cite{AggarwalGoatin,ColomboMercierGaravello}, bacterial chemotaxis \cite{DiFrancescoRosado}, are given by $\vartheta(\rho) = (1 - \rho)^{\alpha}_+$ for $\alpha \geq 1$.
	
	For what concerns the left-hand side of~\eqref{eq:intro schema}, $\gamma,\eps > 0$ are fixed parameters, $\dot{x}_i, \ddot{x}_i$ denote the velocity and the acceleration of the $i$-th particle respectively, while $\zeta$ is another nonnegative function of the density $\rho_i$ describing the effect of the congestion on the drivers' response-time, namely on their alertness. 
	Indeed, in the literature, the presence of second order terms in individual-based systems usually models the attentiveness of the particle to the surrounding environment. 
	Smaller values of the coefficient in front of $\ddot{x}_i$ correspond to a faster response of the particle; in the limit case where the coefficient is null, i.e. there is no second order term in the equation, the velocity of the particle instantaneously changes accordingly to the forcing term. 
	For this reason, when modelling traffic flows, it feels natural to require that the coefficient of the second order term depends on how crowded the space is in front of the driver in the direction of the motion. The term $\eps \zeta(\rho_i) \ddot{x}_i$  in~\eqref{eq:intro schema} has precisely this role: in crowded regions (where $\rho_i$ is close to or above a certain threshold $\bar{\rho}_\zeta$) we expect $\zeta$ to be very small or even null, thus the particle will almost instantaneously adapt its velocity, while in low-density areas ($\rho_i$ close to $0$) we expect $\zeta \approx 1$ as the level of attention drops and the particle can slowly react to inputs. 
	Accordingly to these observations, we require $\zeta$ to be supported on an interval $[0,\bar{\rho}_\zeta]$ for some constant $\bar{\rho}_\zeta \leq \bar{\rho}_\vartheta$ somehow describing an instantaneous response threshold: whenever $\rho_i(t)\ge \bar{\rho}_\zeta$ the equation \eqref{eq:intro schema} becomes of first order, and so the velocity $\dot x_i(t)$ changes instantaneously according to the right hand-side.
	We emphasize that the alertness function $\zeta$ plays a similar role to that of $\vartheta$ in accounting for the local level of congestion, however the two functions may differ from each other since the effect of the congestion on the forcing term or on the second order term may, in principle, not be related at all. 
	
	\subsection{Main results}
	In this work we first establish the well-posedness of  the system~\eqref{eq:intro schema} under general assumptions on $\zeta, \vartheta, F$ and the initial data. We may also consider different functions $\zeta_i, \vartheta_i, F_i$  for each particle, modelling distinct individual responses (see Remark~\ref{rmk:individual}). Notice that the presence of $\zeta$ in front of the leading order term $\ddot{x}_i$ makes the equation highly degenerate since $\zeta$ can vanish when $x_i$ is too close to the particle ahead.
	Such kind of degeneracy leads to several mathematical difficulties and prevents one from applying standard arguments to prove existence. Indeed, although theories on singular ordinary differential equations are known \cite{BookDegODE}, we are not aware of existing settings which include the system under consideration. 
	Moreover, equation~\eqref{eq:intro schema} makes sense only if $\rho_i$ can be defined, namely when the distance $x_{i+1}- x_i$ remains positive along the evolution, see~\eqref{eq:rhoi}.
	This is an expected behaviour of the system, indeed when $x_i$ and $ x_{i+1}$ are very close, $\zeta$ and $\vartheta$ both vanish and formally $\dot{x}_i = 0$. Surprisingly, the rigorous proof of this observation is far from being trivial and requires a careful study of the equations.  
	Another crucial feature of microscopic traffic flow models is that all particles move in the same direction. We show that the proposed scheme fulfills the expected behaviour, despite this being not evident from the expression~\eqref{eq:intro schema}. 
	
	We then study uniform bounds for suitable approximations of some relevant macroscopic quantities, e.g. density, first and second order moment, whose definitions depend on the particles trajectories solving~\eqref{eq:intro schema}. 
	The presence of the nonlinearities $\zeta,\vartheta$ prevents from approximating the macroscopic density $\rho$ in the continuum setting via the standard approach via empirical measures $\frac{1}{N+1} \sum_{i=0}^{N} \delta_{x_i}$ as done in \cite{CarrilloChoiHauray} for first-order dynamics and in \cite{CarrilloChoi} for second-order ones. 
	
	Exploiting the one-dimensional structure of the dynamics, we instead rely on piece-wise constant approximation techniques as those introduced in~\cite{DiFrancescoRosini} and further extended to more involved first-order transport mechanisms, including nonlocal interactions~\cite{DiFrancescoFagioliRadici,FagioliTse,RadiciStra}, external potentials~\cite{DiFrancescoStivaletta} and diffusive operators~\cite{DaneriRadiciRuna2,FagioliRadici}, and to deal with the second-order homogeneous ARZ model~\cite{DiFrancescoFagioliRosiniTRAFFIC}. We show that such approximations of the density and the first and second moment enjoy suitable uniform bounds, and thus also weak compactness in suitable topologies.
	
	We then perform the many-particle limit of the system~\eqref{eq:intro schema} as the number $N$ of vehicles goes to infinity in the regime $\zeta = k \vartheta$, for some $k>0$. In this case, we derive the following continuum degenerate pressureless Euler-type model 
	\begin{equation}\label{eq:intro continuum}
		\begin{cases}
			\partial_t\rho+\partial_x(\rho u)=0,&\text{in }(0,T)\times \R, \\
			\eps k \vartheta(\rho)\Big(\partial_t(\rho u)+\partial_x(\rho u^2)\Big) + \gamma\rho u=\rho\vartheta(\rho)F,&\text{in }(0,T)\times \R,
		\end{cases}
	\end{equation}
	where $\rho$ represents the traffic density and $u$ the average velocity. In this setting, we are able to prove that the weak limits obtained by the discrete approximations of the density $\rho$ and first and second moment $\rho u$ and $\rho u^2$ respectively, solve system~\eqref{eq:intro continuum}, at least in a very weak sense (see Definition~\ref{def:solcont}). 
	
	To the best of our knowledge, this is the first convergence result for second-order models involving the nonlinearity $\vartheta$.
	We do not consider here the many-particle limit for $\zeta \neq \vartheta$, since this more general case requires stronger compactness properties of the piece-wise interpolants which are highly nontrivial and will be treated in future works. 
	However, the uniform bounds satisfied by the discrete approximations do not depend explicitly on the parameter $\eps$ and this allows for the degeneracy of the small parameter $\eps = \eps_N$ in the many-particle limit.
	We can thus perform a combined many-particle limit and asymptotic analysis as $\eps\to0$ to prove that the piece-wise interpolations converge in the limit as $N \to \infty$, and hence $\eps_N \to 0$, to a suitable weak solution of the first order macroscopic traffic model
	\[ 
	\partial_t \rho +\frac 1\gamma\partial_x(\rho\vartheta(\rho) F)=0,\qquad\text{in }(0,T)\times \R,
	\]
	in the sense of Theorem~\ref{thm:vaninert}.
	
	\subsection{Comparison with the ARZ model}\label{sec:ARZ}
	We conclude this introduction with a comparison between the proposed model and the ARZ model where, for simplicity, we set $F \equiv 1$ and the parameters $\eps =\gamma = 1$. In this case, the ARZ model in its macroscopic nonlinear formulation reads as 
	\begin{equation}\label{eq: ARZ model macro}
		\begin{cases}
			\partial_t\rho+\partial_x(\rho u)=0,&\text{in }(0,T)\times \R, \\
			\partial_t(\rho w)+\partial_x(\rho u w) = A \rho \big( \vartheta(\rho) - u\big),&\text{in }(0,T)\times \R,
		\end{cases}
	\end{equation}
	where $A\ge 0$ is a nonnegative constant, while $w$ is usually referred to in the literature as a \emph{Lagrangian marker} expressing a sort of average desired velocity of the vehicles flow and is related to the actual average velocity through the relation $w = u + P(\rho)$, where $P$ is a pressure function depending on the density.
	
	In the ARZ model, the congestion effects play a role exactly in the latter relation. Indeed, by the monotonicity of $P$, higher values of $\rho$ (i.e. crowded regions) correspond to  higher values of the pressure, and so to higher mismatch between the actual and the desired velocities $u$ and $w$.
	Our model, on the other hand, directly deals with the first and second moment related to the actual average velocity $u$ and encompasses congestion effects via the alertness function $\zeta$. Indeed, in the same setting $F \equiv 1$ and $\eps=\gamma=1$, the system \eqref{eq:intro continuum} (here we are not requiring $\zeta = k \vartheta$) can be written in the following form 
	\begin{equation*}
		\begin{cases}
			\partial_t\rho+\partial_x(\rho u)=0,&\text{in }(0,T)\times \R, \\
			\zeta(\rho)\Big(\partial_t(\rho u)+\partial_x(\rho u^2)\Big) =\rho \big( \vartheta(\rho) - u \big),&\text{in }(0,T)\times \R.
		\end{cases}
	\end{equation*}
	
	We recall that many-particle limit results for the ARZ model \eqref{eq: ARZ model macro} are nowadays available only in the homogeneous case $A = 0$ and, therefore, deal with microscopic models of the form 
	\begin{equation}\label{eq: ARZ model micro hom}
		\begin{cases}
			\dot{x}_i(t) = w_i(t) - P(\rho_i(t)), \\
			\dot{w}_i(t) = A \big( \vartheta(\rho_i(t)) - \dot{x}_i(t) \big) = 0,
		\end{cases}\qquad t\in(0,T),
	\end{equation}
	which is actually a first order system since from the second equation $w_i$ turns out to be constant along the evolution. Thus, it is possible to apply many first-order tools \cite{DiFrancescoFagioliRosiniTRAFFIC} to deal with the homogeneous problem. 
	To the best of our knowledge, in the present work we consider for the first time existence and many-particle limit results for a second-order traffic model involving the nonlinearity $\vartheta$. Indeed, system \eqref{eq:intro schema} is truly of second-order nature as it can also be seen by the following reformulation (still when $F \equiv 1$ and $\eps=\gamma=1$)
	\begin{equation}\label{eq:simplifiedmicro}
		\begin{cases}
			\dot{x}_i(t) = w_i(t) , \\
			\zeta (\rho_i(t)) \dot{w}_i(t) = \vartheta(\rho_i(t)) - \dot{x}_i(t),
		\end{cases}\qquad t\in(0,T).
	\end{equation}
	As a common feature between the microscopic models \eqref{eq: ARZ model micro hom} (also in the non-homogeneous case $A>0$) and~\eqref{eq:simplifiedmicro}, we mention that in both cases the term $\vartheta(\rho_i)$ may represent an \emph{equilibrium velocity} for the $i$-th particle.   
	
	\smallskip
	
	{\bf Plan of the paper.} In Section~\ref{sec:setting} we detail the degenerate microscopic traffic model we propose and analyse in the paper. We list all the required assumptions and we state our main results regarding existence of solutions, the limit passage to a continuum second order traffic model as the number of particles increases, and the asymptotic analysis as $\eps\to0$ leading to a first order model with nonlinear mobility. We refer respectively to Theorems~\ref{thm:maindiscrete},~\ref{thm:maincont} and~\ref{thm:vaninert}. In Section~\ref{sec:trafficlight} we present an explicit example describing the situation of a traffic light that becomes red. Section~\ref{sec:lemmas} collects useful results concerning the behaviour of solutions to degenerate second order differential equations which will be employed in the work. The last three sections are devoted to the proofs of the main results: we prove the existence theorem for the discrete system in Section~\ref{sec:existdiscrete}, the many-particle limit in Section~\ref{sec:manyparticle}, and finally the first-order approximation in Section~\ref{sec:vaninertia}.
	
	\smallskip 
	
	{\bf Notations.}	The maximum (resp. minimum) of two extended real numbers $\alpha,\beta\in \R\cup\{\pm\infty\}$ is denoted by $\alpha\vee\beta$ (resp. $\alpha\wedge\beta$). The interior of a set $A$ is denoted by $\mathring A$ or ${\mathrm{int}}(A)$.
	
	For functions $f(t)$ depending on one scalar variable representing time, we denote by $\dot f(t)$ its derivative. If $f=f(t,x)$ instead, we write $\partial_t f$, $\partial_x f$ for the partial derivatives with respect to time $t$ and space $x$, respectively.
	
	We adopt standard notations for Lebesgue and Sobolev spaces or spaces of continuous, Lipschitz continuous, or continuously differentiable functions. A superscript $^+$ is added when referring to subspaces of nonnegative functions. By $\mathcal M([0,T]\times \R)$ we denote the set of Radon measures on $[0,T]\times \R$, and we write $\mathcal M([0,T]\times \R)^+$ for its subset of positive measures.

	\section{Setting and main results}\label{sec:setting}
	\subsection{Degenerate microscopic traffic model}\label{subsec:discrete}
	The microscopic traffic flow scheme we consider in this paper models the evolution of $N+1$ ordered vehicles (or thinking particles), whose position at time $t\in [0,T]$, for some time horizon $T>0$, is represented by the function $x_i(t)$, for $i=0,\ldots, N$. In order to describe the system, we introduce several quantities. The distance between two consecutive vehicles is denoted by $d_i(t):=x_{i+1}(t)-x_i(t)$, for $i=0,\ldots,N-1$, and we set
	\begin{equation}\label{eq:rho}
		\rho_i(t):=\frac{1}{N d_i(t)}=\frac{1}{N (x_{i+1}(t)-x_i(t))},\qquad \text{for }i=0,\ldots N-1,
	\end{equation}
	modelling a reconstruction of the macroscopic particle density at $x_i(t)$.
	
	We also consider a nonnegative time-dependent external drift $F\in C^0([0,T]\times \R)$, while the alertness function $\zeta\in C^{0}([0,\infty))$ and the congestion function $\vartheta\in C^{0}([0,\infty))$ are assumed to be nonnegative and to satisfy 
	\[
	\zeta(r)=0\quad\text{if and only if}\quad r\geq \bar \rho_\zeta,\qquad \text{and}\qquad\vartheta(r)=0\quad\text{if and only if}\quad r\geq \bar \rho_\vartheta,
	\] 
	for certain values $\bar \rho_\vartheta\ge \bar\rho_\zeta>0$ modelling the hard congestion threshold and the instantaneous response threshold, respectively. If $\rho_i(t)\in[\bar\rho_\zeta,\bar\rho_\vartheta]$, we say that the particle $x_i(t)$ is saturated at time $t$, otherwise we say it is unsaturated. In the particular case $\rho_i(t)=\bar\rho_\vartheta$ we instead say that the particle is congested.
	
	Given an initial configuration $x_0^0<x_1^0<\ldots<x_N^0$ such that $\tfrac{1}{N(x_{i+1}^0-x_i^0)}=:\rho_i^0\leq \bar \rho_\vartheta$ for $i=0,\ldots,N-1$, we define the set of saturated indexes at the initial time as \[
	\Sigma^0=\Big\{i\in\{0,\ldots,N-1\} : \rho_i^0\in[\bar \rho_\zeta, \bar \rho_\vartheta]\Big\}.
	\]
	
	We allow the particle in front to move freely, namely we assume that $x_N\in C^0([0,T])$ is a given nondecreasing function which fulfils $x_N(0)=x_N^0$. 
	
	The traffic scheme we want to study consists in the following degenerate second order system of nonlinear ordinary differential equations, set in the time interval $[0,T]$:
	\begin{equation}\label{eq:system}
		\begin{cases}
			\eps\zeta(\rho_i(t))\ddot{x}_i(t) + \gamma\dot{x}_i(t) = \vartheta(\rho_i(t))F(t,x_i(t)),&\qquad \text{for }i=0,\ldots,N-1,\\
			x_i(0)=x_i^0,&\qquad \text{for }i=0,\ldots,N-1,\\
			\dot{x}_i(0)=v_i^0,&\qquad \text{for }i\notin \Sigma^0,
		\end{cases}
	\end{equation}
	where $\rho_i(t)$ has been introduced in \eqref{eq:rho}, $\eps,\gamma>0$ are two positive parameters and $v_i^0\geq 0$, for $i\notin \Sigma^0$, are nonnegative initial velocities. Note that prescribing initial velocities is meaningful only for unsaturated initial indexes, since for them the system is really of second order. If instead $\rho_i^0\in[\bar \rho_\zeta, \bar \rho_\vartheta]$, i.e. $i\in \Sigma^0$, then $\zeta$ vanishes at $t=0$ and the system collapses to a first order one, so there is no need to enforce an initial velocity (which actually, as we will see, may make the system ill-posed).
	
		Up to our knowledge, system \eqref{eq:system} does not fit within known classes of degenerate ordinary differential equations. Unlike the simplest and most studied kind of singularities, just depending on the independent variable $t$ \cite{Dias, Kozhanov}, our system degenerates with respect to the unknown itself (via the density $\rho_i$ as in \eqref{eq:rho}). Although even this latter type of singularities have been partially analysed, in known frameworks \cite{BookDegODE} the degeneracy occurs just on a discrete set of positions, while here the function $\zeta$ vanishes on a whole interval.
	
	The rigorous definition of solution to~\eqref{eq:system} that we adopt in this paper is the following one.
	
	\begin{definition}\label{def:sol}
		Under the previous assumptions, we say that $x=(x_0,\ldots,x_{N-1})\in C^{0,1}([0,T];\R^{N})$ is a solution to the microscopic traffic model if each component $x_i$, for $i=0,\ldots,N-1$, solves \eqref{eq:system} in the sense that:
		\begin{enumerate}
			\item[\mylabel{hi}{\hi}] $x_i(0)=x_i^0$;
			\item[\mylabel{hii}{\hii}] the map $t\mapsto x_i(t)$ is nondecreasing in $[0,T]$;
			\item[\mylabel{hiii}{\hiii}]  for all $t\in[0,T]$ there hold $x_i(t)< x_{i+1}(t)$ and $\rho_i(t)\leq \bar \rho_\vartheta$;
			\item[\mylabel{hiv}{\hiv}] the function $x_i$ is of class $C^2$ in the (relatively) open set
			\begin{equation}\label{eq:open1}
				\{t\in [0,T] : \rho_i(t)<\bar\rho_\zeta\},
			\end{equation}
			and therein it solves the second order equation
			\begin{equation}\label{eq:systempos}
				\eps\zeta(\rho_i(t))\ddot{x}_i(t)+\gamma\dot{x}_i(t)=\vartheta(\rho_i(t))F(t,x_i(t));
			\end{equation}
			\item[\mylabel{hv}{\hv}] the function $x_i$ is of class $C^1$ in the (relatively) open set
			\begin{equation}\label{eq:open2}
				\{t\in [0,T] : \rho_i(t)\in(\bar \rho_\zeta,\bar \rho_\vartheta]\}\cup{\mathrm{int}}{\{t\in [0,T] : \rho_i(t)={{\overline{\rho}}_\zeta}\}},
			\end{equation}
			and therein it solves the first order equation
			\begin{equation}\label{eq:systempos1ord}
				\gamma\dot{x}_i(t)=\vartheta(\rho_i(t))F(t,x_i(t));
			\end{equation}
			\item[\mylabel{hvi}{\hvi}] $\dot{x}_i(0)=v_i^0$ for $i\notin \Sigma^0$, namely for the unsaturated indexes, which satisfy $\rho_i^0<\bar \rho_\zeta$.
		\end{enumerate}
	\end{definition}
	\begin{remark}
		Conditions \ref{hiv} and \ref{hv}, together with the attainment of the initial data \ref{hi} and \ref{hvi}, completely specify the behaviour of the solution in the whole interval $[0,T]$. Indeed, in the remaining set $\partial \{t\in [0,T] : \rho_i(t)={{\overline{\rho}}_\zeta}\}$, where a differential equation can not be imposed since the set does not contain any interval, by definition \eqref{eq:rho} of $\rho_i$ one has
		\begin{equation*}
			x_i(t)=x_{i+1}(t)-\frac{1}{N\bar\rho_\zeta}.
		\end{equation*}
		Hence, the position of $x_i$ is prescribed by the position of $x_{i+1}$.
	\end{remark}
	
		\begin{remark}
			Condition~\ref{hiii} entails that the distance between two consecutive vehicles is always greater than or equal to a certain positive constant (depending on $N$), which can thus be interpreted as the normalised vehicles' length. Namely,
			\begin{equation}\label{eq:lenght}
				d_i(t)=\frac{1}{N \rho_i(t)}\geq \frac{1}{N\bar \rho_\vartheta}.
			\end{equation}
		\end{remark}	
	
	We state now the first result of the paper, ensuring existence of solutions to the microscopic traffic scheme. We will prove it in Section~\ref{sec:existdiscrete}. Not surprisingly, since the idea is natural when dealing with degenerate problems \cite{BookDegODE}, our approach relies on a regularization of \eqref{eq:system} and on a careful convergence analysis of the solutions to this approximated problem.
	\begin{theorem}\label{thm:maindiscrete}
		Under the previous assumptions, there exists a solution $x$ to the microscopic traffic model~\eqref{eq:system} in the sense of Definition~\ref{def:sol} satisfying the following bound on the velocity for $i=0,\ldots,N-1$:
		\begin{equation}\label{eq:uniflipbound}
			\dot x_i(t)\le \widetilde v_i^0\vee \left(\frac{\bar\vartheta}{\gamma}\,\max\limits_{[0,T]\times[x_0^0,x_N(T)]}F\right),\qquad\text{for almost every }t\in (0,T),
		\end{equation}
		where we set
		\begin{equation}\label{eq:vtilde}
			\widetilde v_i^0:=\begin{cases}
				v_i^0,\qquad &\text{if }i\not\in \Sigma^0,\\
				0,\qquad &\text{if }i\in \Sigma^0,
			\end{cases}
		\end{equation}
		and $\bar\vartheta:=\max\limits_{[0,\bar\rho_\vartheta]}\vartheta$.
		
		If in addition $x_N$ is strictly increasing and $F$ is positive, then $x_i$ is strictly increasing for all $i=0,\ldots,N-1$.
	\end{theorem}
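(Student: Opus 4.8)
The coupling in \eqref{eq:system} is one-sided — the equation for $x_i$ sees only $x_{i+1}$, through $\rho_i=\frac1{N(x_{i+1}-x_i)}$ — so I would build the solution by a \emph{backward cascade}: the leader $x_N\in C^0([0,T])$ is prescribed, and $x_{N-1},\dots,x_0$ are constructed one at a time, at each stage regarding the already-built, nondecreasing, continuous trajectory $y:=x_{i+1}$ (with $y(0)=x_{i+1}^0$) as a datum. The whole theorem then reduces to the scalar problem: given such a $y$, a point $x^0<y(0)$ with $\frac1{N(y(0)-x^0)}\le\bar\rho_\vartheta$ and, for $i\notin\Sigma^0$, an initial velocity $v^0\ge0$, find $x\in C^{0,1}([0,T])$, nondecreasing, with $\rho:=\frac1{N(y-x)}\le\bar\rho_\vartheta$, solving $\eps\zeta(\rho)\ddot x+\gamma\dot x=\vartheta(\rho)F(\cdot,x)$ in the regime-by-regime sense of Definition~\ref{def:sol}. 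Both the bound \eqref{eq:uniflipbound} and the preservation of the ordering fall out of the construction (note that by the ordering and the monotonicity of $x_N$ every particle stays in $[x_0^0,x_N(T)]$, so only the values of $F$ on that rectangle matter).

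\textbf{Vanishing-degeneracy construction for the scalar problem.} I would regularize, replacing $\zeta$ by $\zeta_\delta:=\zeta+\delta$ with $\delta>0$: then $\ddot x=\frac1{\eps\zeta_\delta(\rho)}\bigl(\vartheta(\rho)F(\cdot,x)-\gamma\dot x\bigr)$ is a non-degenerate ODE with continuous right-hand side as long as $x<y$, and Peano gives a local solution $x^\delta$ with $\dot x^\delta(0):=\widetilde v_i^0$. Two a priori bounds must then be proved \emph{uniformly in $\delta$}. \emph{(i) Velocity:} wherever $\dot x^\delta>\bar\vartheta\max F/\gamma$ one has $\vartheta(\rho^\delta)F-\gamma\dot x^\delta<0$, hence $\ddot x^\delta<0$; a first-exit argument then yields $\dot x^\delta\le\widetilde v_i^0\vee(\bar\vartheta\max F/\gamma)$, which is \eqref{eq:uniflipbound}, and symmetrically, wherever $\dot x^\delta<0$ one has $\ddot x^\delta>0$, forcing $\dot x^\delta\ge0$. \emph{(ii) No collision:} on the open set $\{\rho^\delta>\bar\rho_\vartheta\}$ both $\vartheta(\rho^\delta)=0$ and, since $\bar\rho_\zeta\le\bar\rho_\vartheta$, $\zeta(\rho^\delta)=0$, so there $\eps\delta\,\ddot x^\delta=-\gamma\dot x^\delta$ and $\dot x^\delta$ decays exponentially from a value controlled by (i); hence $x^\delta$ moves by at most $O(\delta)$ on that set, and since $y$ is nondecreasing and $\rho^\delta=\bar\rho_\vartheta$ on its boundary, the gap $y-x^\delta$ never drops below $\frac1{N\bar\rho_\vartheta}-O(\delta)>0$ for $\delta$ small — in particular $x^\delta$ exists on all of $[0,T]$. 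With these bounds $\{x^\delta\}$ is equi-bounded and equi-Lipschitz with $y-x^\delta$ bounded below, so along a subsequence $x^\delta\to x$ in $C^0$, $\dot x^\delta\overset{*}{\rightharpoonup}\dot x$ in $L^\infty$, and $\rho^\delta\to\rho$ in $C^0$ with $\rho\le\bar\rho_\vartheta$; this already yields \ref{hi}, \ref{hii}, \ref{hiii} and \eqref{eq:uniflipbound}. On every compact subset of the open set $\{\rho<\bar\rho_\zeta\}$, $\zeta_\delta(\rho^\delta)$ is bounded below, so $\ddot x^\delta$ is uniformly bounded in $L^\infty$ and $\dot x^\delta\to\dot x$ in $C^0$ there; passing to the limit (using only $F,y\in C^0$) gives $x\in C^2$ and \eqref{eq:systempos}, i.e.\ \ref{hiv}, and the same near $t=0$ for unsaturated indices gives \ref{hvi}.

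\textbf{The main obstacle.} The delicate point — for which Section~\ref{sec:lemmas} is designed — is property \ref{hv}: on $\{\rho\in(\bar\rho_\zeta,\bar\rho_\vartheta)\}$ and on the interiors of the level sets $\{\rho=\bar\rho_\zeta\}$, $\{\rho=\bar\rho_\vartheta\}$ one must recover the first-order law \eqref{eq:systempos1ord} and the $C^1$-regularity of $x$. There $\zeta(\rho)=0$, so $\eps\zeta_\delta(\rho^\delta)\ddot x^\delta$ is a product of a vanishing factor with an a priori uncontrolled one, and since $\zeta\circ\rho^\delta$ is only continuous one cannot integrate by parts to move the second derivative off it; identifying the limit, and ruling out interior (and, for saturated initial data, initial) boundary layers, is precisely the content of the ODE estimates of Section~\ref{sec:lemmas}, which also deliver the non-obvious ``congested implies instantaneously frozen'' fact ($\dot x\equiv0$ wherever $\rho=\bar\rho_\vartheta$) and the consistency of transitions between the three regimes. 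I would isolate all of this into one scalar lemma and spend the bulk of the proof on it; the cascade then assembles the theorem.

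\textbf{Strict monotonicity.} For the final assertion I would again induct downward on $i$, the case $i=N$ being the hypothesis. Assume $x_{i+1}$ strictly increasing and $F>0$, and suppose $x_i$ were constant on a nondegenerate interval $[a,b]$. Then $d_i=x_{i+1}-x_i$ is strictly increasing on $[a,b]$, hence $\rho_i$ is strictly decreasing there, so $\rho_i(t)<\bar\rho_\vartheta$ for $t$ just above $a$; by the Remark following Definition~\ref{def:sol} we are then, just after $a$, in one of the regimes \ref{hiv} or \ref{hv}. In the regime $\rho_i<\bar\rho_\zeta$, \ref{hiv} with $\dot x_i\equiv\ddot x_i\equiv0$ forces $\vartheta(\rho_i)F(\cdot,x_i)=0$, impossible since $\rho_i<\bar\rho_\zeta\le\bar\rho_\vartheta$ and $F>0$; in the regime $\rho_i\in(\bar\rho_\zeta,\bar\rho_\vartheta)$, \ref{hv} gives $\gamma\dot x_i=\vartheta(\rho_i)F(\cdot,x_i)>0$, again contradicting $\dot x_i\equiv0$; finally $\rho_i\equiv\bar\rho_\zeta$ or $\rho_i\equiv\bar\rho_\vartheta$ on a subinterval would contradict the strict decrease of $\rho_i$ just obtained. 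Hence $x_i$ is constant on no interval, and being nondecreasing by \ref{hii} it is strictly increasing, closing the induction.
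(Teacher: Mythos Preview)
Your proposal is correct and follows the paper's strategy: backward cascade on $i$, regularization $\zeta\mapsto\zeta+\delta$, uniform velocity bounds, Ascoli--Arzel\`a, and regime-by-regime identification of the limit, with the strict-monotonicity argument a minor rephrasing of the paper's. Two tactical differences are worth flagging. First, you obtain \emph{global} existence of $x^\delta$ on $[0,T]$ directly, by observing that on $\{\rho^\delta>\bar\rho_\vartheta\}$ the exponential decay of $\dot x^\delta$ (rate $\gamma/(\eps\delta)$) limits the total advance of $x^\delta$ there to $O(\delta)$, so the gap stays above $\tfrac1{N\bar\rho_\vartheta}-O(\delta)$; the paper instead argues only locally (until $x^\delta$ reaches the midpoint of the initial gap), passes to the limit, proves $\rho_i\le\bar\rho_\vartheta$ for the \emph{limit} via essentially the same exponential-decay computation, and then runs a separate maximal-time extension step --- your route is shorter. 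Second, and more substantively, your description of \ref{hv} is slightly off target: the paper's tool there is not Lemma~\ref{le:lemmadotcont} (the ``boundary layer'' lemma, which is used only in Proposition~\ref{prop:nosaturated}) but the variation-of-constants formula of Lemma~\ref{le:lemmadoty} combined with Lemma~\ref{lemma:apprdelta}, which shows that when $\zeta(\rho^\delta)+\delta\to0$ uniformly the kernel $\tau\mapsto\frac{1}{\beta(\tau)}e^{-\int_\tau^t 1/\beta}$ concentrates at $t$, yielding $\gamma\dot x^\delta(t)\to\vartheta(\rho_i(t))F(t,x_i(t))$ pointwise on the first-order region and hence \eqref{eq:systempos1ord} after testing against $\varphi\in C^\infty_{\rm c}$.
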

	
	\begin{remark}\label{rmk:individual}
		Since the proof of the above theorem is based on an induction argument on the particles, the result still remains true even if the alertness and the congestion functions may be different vehicle by vehicle. Namely we could consider $\zeta_i$ and $\vartheta_i$ for $i=0,\ldots,N-1$, thus modelling distinct individual responses. For the same reason, also the drift may depend on the particle, i.e. we could choose functions $F_i$ for $i=0,\ldots,N-1$, even though from the modellistic viewpoint a unique function is more realistic since it should describe external effects.
	\end{remark}
	
	By slightly strenghtening the assumptions, we also show that every solution of the microscopic traffic model never reaches the hard congestion threshold $\bar\rho_\vartheta$. This property will be crucial for the many-particle limit and the first-order approximation we will perform later.
	
	\begin{proposition}\label{prop:nosaturated}
		In addition to the previous assumptions, suppose that the external drift $F$ is positive, that the alertness function $\zeta$ is Lipschitz continuous near $\bar \rho_\zeta$, and that $x_N\in C^1([0,T])$ fulfils $\dot x_N(t)>0$ for all $t\in (0,T]$.
		
		Then, any solution $x$ in the sense of Definition~\ref{def:sol} which furtherly fulfils 
		\begin{equation}\label{eq:isolatedpoints}
			\text{if $\bar \rho_\zeta<\bar \rho_\vartheta$, then the set $\partial \{t\in [0,T] : \rho_i(t)={{\overline{\rho}}_\zeta}\}$ is finite for all $i=0,\ldots,N-1$},
		\end{equation}
		satisfies:
		\begin{itemize}
			\item $x$ belongs to $C^1([0,T];\R^N)$;
			\item $\rho_i(t)<\bar \rho_\vartheta$ and $\dot{x}_i(t)>0$ for all $t\in (0,T]$ and $i=0,\ldots,N-1$;
			\item $\dot{x}_i(0)=\frac 1\gamma\vartheta(\rho_i^0)F(0,x_i^0)$ for $i\in \Sigma^0$.
		\end{itemize}
		Moreover, if $\bar \rho_\zeta=\bar \rho_\vartheta$ one also has $x\in C^2((0,T];\R^N)$.
	\end{proposition}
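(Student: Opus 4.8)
The plan is to establish three facts in sequence, working one particle at a time and using the existence result of Theorem~\ref{thm:maindiscrete} together with the structural lemmas collected in Section~\ref{sec:lemmas}. The key idea is a \emph{continuity argument}: define the bad time $t_i^\ast := \inf\{t\in(0,T] : \rho_i(t) = \bar\rho_\vartheta\}$ (with $t_i^\ast = +\infty$ if the set is empty) and show that $t_i^\ast$ cannot be finite. Suppose for contradiction that $t_i^\ast \le T$. Just before $t_i^\ast$ one has $\rho_i(t)\in(\bar\rho_\zeta,\bar\rho_\vartheta)$ on some interval (or $\rho_i \equiv \bar\rho_\zeta$ on an interval, handled via condition~\eqref{eq:isolatedpoints}), so by \ref{hv} the particle obeys the first order law $\gamma\dot x_i = \vartheta(\rho_i)F(t,x_i) \ge 0$, hence $\dot x_i \ge 0$ there; combined with the strict positivity $\dot x_N>0$ this forces $d_i = x_{i+1}-x_i$ to be \emph{strictly increasing} as $t\uparrow t_i^\ast$, so $\rho_i$ is strictly decreasing there, contradicting $\rho_i(t_i^\ast)=\bar\rho_\vartheta \ge \rho_i(t)$. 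This is where the hypothesis $\dot x_N(t)>0$ for all $t$ is essential — it prevents the leader from stalling and letting a follower catch up to the jam density. Thus $\rho_i(t)<\bar\rho_\vartheta$ on all of $(0,T]$; feeding this back, the first-order equation \eqref{eq:systempos1ord} gives $\dot x_i(t)>0$ for every $t\in(0,T]$ at which that equation holds, and a short argument on the exceptional boundary set (finite by \eqref{eq:isolatedpoints}, or empty when $\bar\rho_\zeta=\bar\rho_\vartheta$) extends the strict inequality everywhere.

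Next I would upgrade regularity to $C^1([0,T];\R^N)$. On the open set $\{\rho_i<\bar\rho_\zeta\}$, $x_i$ is $C^2$ by \ref{hiv}; on the open set in \eqref{eq:open2}, $x_i$ is $C^1$ with $\dot x_i = \vartheta(\rho_i)F(t,x_i)/\gamma$ by \ref{hv}. The only places to check are the finitely many boundary points of $\{\rho_i = \bar\rho_\zeta\}$ and of $\{\rho_i = \bar\rho_\vartheta\}$; the latter is now empty. At a point $\bar t$ where $\rho_i(\bar t)=\bar\rho_\zeta$, both one-sided descriptions yield the same candidate derivative $\vartheta(\bar\rho_\zeta)F(\bar t,x_i(\bar t))/\gamma$ — indeed on the $C^2$ side, as $t\to\bar t$ with $\rho_i(t)<\bar\rho_\zeta$, the quantity $\eps\zeta(\rho_i)\ddot x_i \to 0$ because $\zeta$ is continuous and vanishes at $\bar\rho_\zeta$ \emph{and} $\ddot x_i$ stays bounded, the latter being exactly the content of the degenerate-ODE lemmas in Section~\ref{sec:lemmas} (this is where the extra hypothesis that $\zeta$ is $C^1$ near $\bar\rho_\zeta$ with bounded derivative enters, to control $\ddot x_i$ via differentiating the equation). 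Hence $\dot x_i$ extends continuously across $\bar t$, giving $x_i\in C^1([0,T])$; in particular at $t=0$ for saturated indexes $i\in\Sigma^0$ (where $\rho_i^0=\bar\rho_\zeta$ after the previous step rules out $\rho_i^0=\bar\rho_\vartheta$... more precisely $\rho_i^0\in[\bar\rho_\zeta,\bar\rho_\vartheta]$ and the no-saturation argument pushes it off $\bar\rho_\vartheta$ for $t>0$), we read off $\dot x_i(0)=\vartheta(\rho_i^0)F(0,x_i^0)/\gamma$, which is the third bullet.

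Finally, when $\bar\rho_\zeta=\bar\rho_\vartheta$ the set in \eqref{eq:open2} reduces and on $(0,T]$ one has $\rho_i<\bar\rho_\vartheta=\bar\rho_\zeta$, so by \ref{hiv} every $x_i$ is $C^2$ on $(0,T]$, giving the last claim directly. The main obstacle, as flagged above, is the boundedness of $\ddot x_i$ as the degeneracy is approached: one cannot simply divide \eqref{eq:systempos} by $\zeta(\rho_i)$. I expect to handle it by the a priori Lipschitz bound \eqref{eq:uniflipbound} on $\dot x_i$ (hence a two-sided bound, since $\dot x_i\ge0$), which controls $\rho_i$ and its derivative, combined with differentiating \eqref{eq:systempos} to get an ODE for $\dot x_i$ whose coefficients stay bounded thanks to $\zeta\in C^1$ — precisely the kind of estimate I would extract from the preparatory results of Section~\ref{sec:lemmas} rather than redo here. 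A secondary subtlety is verifying, under \eqref{eq:isolatedpoints}, that the first-order regime genuinely governs a neighbourhood of $t_i^\ast$ even when $\rho_i$ touches $\bar\rho_\zeta$ only on a finite set — there one uses that between consecutive such points $\rho_i$ lies strictly in $(\bar\rho_\zeta,\bar\rho_\vartheta)$ or strictly below $\bar\rho_\zeta$, and in either case the sign $\dot x_i\ge0$ persists by \ref{hiv}–\ref{hv}.
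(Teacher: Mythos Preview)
Your core contradiction argument contains a genuine gap. From $\dot x_i\ge 0$ and $\dot x_{i+1}>0$ you \emph{cannot} conclude that $d_i=x_{i+1}-x_i$ is increasing near $t_i^\ast$: for that you would need $\dot x_i<\dot x_{i+1}$, and nothing you have written forces this. What the paper actually proves is the sharper statement
\[
\lim_{t\to (t_i^\ast)^-}\dot x_i(t)=0,
\]
which then gives $\dot d_i^-(t_i^\ast)=\dot x_{i+1}(t_i^\ast)>0$, contradicting $d_i(t)>d_i(t_i^\ast)$ for $t<t_i^\ast$. In the regime $\bar\rho_\zeta<\bar\rho_\vartheta$ this limit is immediate from the first order law, since $\vartheta(\rho_i)\to\vartheta(\bar\rho_\vartheta)=0$; but in the crucial case $\bar\rho_\zeta=\bar\rho_\vartheta$ (the one actually used for the many-particle limit), just before $t_i^\ast$ one has $\rho_i<\bar\rho_\zeta$, so the \emph{second order} equation \eqref{eq:systempos} governs, not the first order one. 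Your sketch misses this entirely: you assume the first order regime is active near $t_i^\ast$.

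Obtaining $\dot x_i\to 0$ from the second order equation as $\zeta(\rho_i)\to 0$ is exactly the content of Lemma~\ref{le:lemmadotcont}, and here your description of that lemma is also off. The lemma does \emph{not} bound $\ddot x_i$; on the contrary, its whole point (as the paper stresses) is to compute $\lim\dot y$ via the integral representation \eqref{eq:doty} and a Cauchy mean-value argument \emph{without} any control on $\ddot y$. The hypothesis that $\zeta$ has bounded derivative on $[0,\bar\rho_\zeta)$ is used to ensure that $t\mapsto\zeta(\rho_i(t))$ has bounded derivative (via \eqref{eq:rhodot} and the Lipschitz bound on $\dot x_i$), which is what Lemma~\ref{le:lemmadotcont} requires---not to differentiate the equation and bound $\ddot x_i$. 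The same lemma is what yields the matching one-sided derivatives at the isolated points of $\partial\{\rho_i=\bar\rho_\zeta\}$ in Step~2, and the value $\dot x_i(0)=\tfrac1\gamma\vartheta(\rho_i^0)F(0,x_i^0)$ for $i\in\Sigma^0$. Finally, note that the inductive hypothesis you need is $\dot x_{i+1}>0$ on $(0,T]$, not $\dot x_N>0$; the latter is only the base case.
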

	\begin{remark}
		The request \eqref{eq:isolatedpoints} is a conditional assumption we directly put on the solution $x$. It forbids wild oscillations of $\rho_i$ around the value $\bar\rho_\zeta$ along the evolution. Although we strongly believe that every solution of the microscopic traffic model should satisfy~\eqref{eq:isolatedpoints} (at least far from $t=0$), we are not able to show its validity. However, in the case $\bar \rho_\zeta=\bar \rho_\vartheta$, which will be the case on which we focus in the sequel when dealing with the many-particle limit and the first-order approximation, that conditional assumption is not needed and the result is completely applicable.
	\end{remark}
	
	\begin{remark}\label{rmk:nosaturated}
		In case $\bar \rho_\zeta=\bar \rho_\vartheta$, the above proposition states that a solution $x$ of the microscopic traffic model is composed by particles moving with positive velocity ($\dot x_i(t)>0$) and which are never congested ($\rho_i(t)<\bar\rho_\vartheta$). In particular, they solve equation \eqref{eq:systempos} for all times $t\in (0,T)$.
	\end{remark}
	
	\begin{remark}[\textbf{Partial uniqueness}]
		In case $\bar \rho_\zeta=\bar \rho_\vartheta=:\bar \rho$, assuming in addition that $\zeta$ and $\vartheta$ are Lipschitz continuous in $[0,\bar\rho]$ and that $x\mapsto F(t,x)$ is Lipschitz continuous in $\R$ uniformly with respect to $t\in [0,T]$, if the initial particles are not congested, namely $\rho_i^0<\bar \rho$ for all $i=0,\ldots, N-1$, then the solution $x$ to the microscopic traffic model~\eqref{eq:system} is unique and of class $C^2([0,T];\R^N)$. This follows by an application of the classical Cauchy-Lipschitz Theorem, since in this situation all the particles satisfy $\rho_i(t)\le c<\bar\rho$, and thus system \eqref{eq:systempos} is not degenerate anymore.
	\end{remark}
	
	\subsection{Discrete-to-continuum limit}
	The first problem we aim to analyse concerns the asymptotic behaviour of the microscopic system \eqref{eq:system} as the number of individuals $N$ grows bigger and bigger. It is well-known \cite{AwKlarMaterneRascle,CarrilloChoi}, and natural, that such limiting behaviour can be captured by means of a system of partial differential equations of Euler-type. 
	This latter continuum point of view is usually called macroscopic approach to traffic models \cite{GaravelloPiccoliBook,RosiniBook}.
	
	In our setting, the many-particle limit of \eqref{eq:system} formally reads as the following system composed by a transport equation coupled with a degenerate partial differential equation:
	\begin{equation}\label{eq:contlimit}
		\begin{cases}
			\partial_t{\rho}+\partial_x(\rho u)=0,&\text{in }(0,T)\times \R,\\
			\eps\zeta(\rho)\Big(\partial_t{(\rho u)}+\partial_x(\rho u^2)\Big)+\gamma\rho u=\rho\vartheta(\rho)F,&\text{in }(0,T)\times \R,\\
			\rho(0)=\rho^0,\qquad (\rho u)(0)=e_1^0.
		\end{cases}
	\end{equation}
	Above, $\rho=\rho(t,x)$ represents the density of vehicles, while $u=u(t,x)$ is their average velocity. The product terms $\rho u$ and $\rho u^2$ instead are called first and second moments, respectively. Note that the initial conditions $(\rho^0,e_1^0)$ are given in terms of the density and of the first moment.
	
	We restrict our attention to the situation 
	\begin{equation}\label{eq:zeta=theta}
		\zeta\equiv\vartheta,
	\end{equation}
	so that in particular there holds
	\begin{equation}\label{eq:barrho}
		\bar\rho_\zeta=\bar\rho_\vartheta=:\bar\rho.
	\end{equation}
	
	Since the congestion function $\vartheta$ may vanish, we are again led to introduce a weak notion of solution for the macroscopic traffic model. The definition below is motivated by the fact that, whenever $\vartheta(\rho)>0$, the second equation in \eqref{eq:contlimit} can be written as \[
	\eps  \Big(\partial_t{(\rho u)}+\partial_x(\rho u^2)\Big)+\gamma\frac{\rho u}{\vartheta(\rho)}=\rho F.
	\]
	
	\begin{definition}\label{def:solcont}
		In case \eqref{eq:zeta=theta}, given an initial pair $(\rho^0,e_1^0)\in (L^\infty(\R)^+)^2$ with compact support, we say that a quadruple $(\rho, e_1,e_2,\lambda)$ is a \emph{measure solution} to the macroscopic traffic model~\eqref{eq:contlimit} if:
		\begin{enumerate}
			\item[\mylabel{ci}{\ci}] $\rho, e_1,e_2\in L^\infty((0,T)\times\R)^+$ and $\lambda\in \mathcal M([0,T]\times \R)^+$;
			\item[\mylabel{cii}{\cii}] $\|\rho\|_{L^\infty((0,T)\times \R)}\leq \bar \rho$ and the (essential) supports of $\rho,e_1,e_2,\lambda$ are compact;
			\item[\mylabel{ciii}{\ciii}]for all $\varphi\in C^1([0,T]\times \R)$ with $\varphi(T)\equiv 0$ there holds 
			\begin{equation}\label{eq:weakcontlimit}
				\begin{cases}
					\displaystyle\int_0^T\int_{\R}(\rho\partial_t{\varphi}+e_1\partial_x\varphi)\,dxdt=-\displaystyle\int_{\R}\rho^0\varphi(0)\,dx,\\
					\eps\displaystyle\int_0^T\int_{\R}(e_1\partial_t{\varphi}+e_2\partial_x\varphi)\,dxdt-\gamma\int_{[0,T]\times\R}\varphi\,d\lambda=-\eps\displaystyle\int_{\R}e_1^0\varphi(0)\,dx-\int_0^T\int_{\R}\rho F \varphi\,dxdt.
				\end{cases}
			\end{equation}
		\end{enumerate}
	\end{definition}
	
	The presence of the nonlinearity $\vartheta$ in the model prevents us from performing the many-particle limit via standard approximations by empirical measures~\cite{CarrilloChoi, CarrilloChoiHauray}. We instead adopt a technique introduced in \cite{DiFrancescoRosini} (see also \cite{DaneriRadiciRuna2,RadiciStra}), which exploits piece-wise constant approximations. In view of Definition~\ref{def:solcont}, the quantities that we aim to approximate and pass to the limit are the density and the first and second moments. For technical reasons, it is convenient to choose a mixed piece-wise constant and piece-wise linear approximation for the second moment, instead of a piece-wise constant one. Let us also stress (only in this and in the next section) the dependence on $N$ of the involved discrete quantities, by adding a supplementary exponent $(N)$.
	
	Let $x^{(N)}$ be a solution of the microscopic traffic model provided by Theorem~\ref{thm:maindiscrete}, under the additional assumptions of Proposition~\ref{prop:nosaturated}. In particular, by \eqref{eq:barrho}, the solution $x^{(N)}\in C^1([0,T];\R^N)\cap C^2((0,T];\R^N)$ satisfies $\rho_i^{(N)}(t)<\bar \rho$ for all $i=0,\ldots,N-1$. For $(t,x)\in [0,T]\times \R$, we then set
	\begin{equation}\label{eq:definterpolants}
		\begin{split}
			\rho^N(t,x)&:=\sum_{i=0}^{N-1}\rho_i^{(N)}(t)\ind{[x_i^{(N)}(t),x^{(N)}_{i+1}(t))}(x),\\
			e^N_1(t,x)&:=\sum_{i=0}^{N-1}\rho_i^{(N)}(t)\dot{x}^{(N)}_i(t)\ind{[x_i^{(N)}(t),x^{(N)}_{i+1}(t))}(x),\\
			e^N_2(t,x)&:=\sum_{i=0}^{N-1}\rho_i^{(N)}(t)\dot{x}^{(N)}_i(t)\left(\dot{x}^{(N)}_i(t)+\frac{\dot{x}^{(N)}_{i+1}(t)-\dot{x}^{(N)}_{i}(t)}{x_{i+1}^{(N)}(t)-x_i^{(N)}(t)}(x-x_i^{(N)}(t))\right)\ind{[x_i^{(N)}(t),x_{i+1}^{(N)}(t))}(x).
		\end{split}
	\end{equation}
	
	In addition to the assumptions of section~\ref{subsec:discrete} and Proposition~\ref{prop:nosaturated}, we also require that there are universal constants $s,S,C>0$ such that 
	\begin{equation}\label{eq:initialassumption}
		s\leq x_0^{0(N)}\leq x_N^{(N)}(T)\leq S,\qquad \sup_{i\notin \Sigma^{0(N)}}v_i^{0(N)}\leq C,\qquad \sup_{t\in [0,T]}\dot{x}_N^{(N)}(t)\leq C.
	\end{equation}
	
	We are now in position to state the second result of the paper, regarding the convergence, as $N$ diverges to infinity, of the microscopic system \eqref{eq:system} to the macroscopic one \eqref{eq:contlimit} when $\zeta=\vartheta$. The proof of the theorem below will be given in Section~\ref{sec:manyparticle}.
	
	\begin{theorem}\label{thm:maincont}
		In addition to the hypotheses of Theorem~\ref{thm:maindiscrete} and Proposition~\ref{prop:nosaturated}, let us assume \eqref{eq:zeta=theta}, \eqref{eq:initialassumption}, and that the map $x\mapsto F(t,x)$ is Lipschitz continuous in $\R$ uniformly with respect to $t\in [0,T]$. Then, the following convergences as $N\to\infty$ hold, up to a nonrelabelled subsequence, for the quantities introduced in \eqref{eq:definterpolants}:
		\begin{subequations}
			\begin{align}
				&x_0^{(N)}\rightarrow \underline x,&& x_N^{(N)}\rightarrow \overline x,\qquad\text{uniformly in }[0,T];\label{eq:limitsa}\\
				&\rho^{N}(0)\overset{*}{\rightharpoonup} \rho^0,&& e_1^N(0)\overset{*}{\rightharpoonup} e_1^0,\quad\,\text{weakly$^*$ in $L^\infty(\R)$};\label{eq:limitsb}\\
				& \rho^N\overset{*}{\rightharpoonup} \rho,&& e_k^N\overset{*}{\rightharpoonup}e_k,\qquad\,\,\text{weakly$^*$ in $L^\infty((0,T)\times\R)$, for $k=1,2$};\label{eq:limitsc}\\
				&\frac{e_1^N}{\vartheta(\rho^N)}\,dtdx\overset{*}{\rightharpoonup} \lambda,&& \qquad\qquad\qquad\,\,\,\text{weakly$^*$ in $\mathcal M([0,T]\times\R)$}.\label{eq:limitsd}
			\end{align} 
		\end{subequations}
		
		Moreover, $\rho^0$ and $e_1^0$ are nonnegative and have compact support, $\|\rho^0\|_{L^\infty(\R)} \leq \bar{\rho}$ and the limit quadruple $(\rho,e_1,e_2,\lambda)$ is a measure solution to the macroscopic traffic model~\eqref{eq:contlimit} with initial data $(\rho^0,e_1^0)$ in the sense of Definition~\ref{def:solcont}.
		
		Finally, the (essential) supports of the four limit objects are contained in the set\[
		\bigcup_{t\in[0,T]}\{t\}\times [\underline x(t),\overline x(t)].
		\]
	\end{theorem}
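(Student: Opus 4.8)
The plan is to first extract, from Theorem~\ref{thm:maindiscrete} and Proposition~\ref{prop:nosaturated}, $N$-uniform bounds on the discrete quantities, then pass to weakly-$*$ convergent subsequences, and finally pass to the limit in \emph{discrete weak formulations} of \eqref{eq:system} obtained by testing the microscopic equation against the cell averages of a test function; the mixed piecewise-constant/linear definition of $e_2^N$ in \eqref{eq:definterpolants} and the division by $\vartheta$ are precisely what make (most terms of) these formulations exact.

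\textbf{Step 1: uniform bounds and compactness.} Since $\bar\rho_\zeta=\bar\rho_\vartheta$, Proposition~\ref{prop:nosaturated} applies: each $x^{(N)}\in C^1([0,T];\R^N)\cap C^2((0,T];\R^N)$ has $0<\dot x^{(N)}_i(t)$, $\rho^{(N)}_i(t)<\bar\rho$ (hence $\vartheta(\rho^{(N)}_i(t))>0$), and solves \eqref{eq:systempos} on $(0,T)$. By \eqref{eq:uniflipbound}, \eqref{eq:vtilde} and \eqref{eq:initialassumption} all the velocities $\dot x^{(N)}_0,\dots,\dot x^{(N)}_N$ are bounded by a constant $\bar V$ independent of $N$ and every particle stays in $[s,S]$; hence $\|\rho^N\|_{L^\infty}\le\bar\rho$, $\|e_1^N\|_{L^\infty}\le\bar\rho\bar V$, $\|e_2^N\|_{L^\infty}\le\bar\rho\bar V^2$ (the affine factor in $e_2^N$ is a convex combination of $\dot x^{(N)}_i$ and $\dot x^{(N)}_{i+1}$), and all these functions together with $\rho^N(0),e_1^N(0)$ are supported in $[s,S]$. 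For the family in \eqref{eq:limitsd} we use the equation itself: dividing \eqref{eq:systempos} by $\vartheta(\rho^{(N)}_i)>0$ gives $\eps\ddot x^{(N)}_i=F(t,x^{(N)}_i)-\gamma\dot x^{(N)}_i/\vartheta(\rho^{(N)}_i)$ on $(0,T)$, and integrating on $(\sigma,T)$ and letting $\sigma\to0^+$ (using $x^{(N)}\in C^1([0,T])$ and monotone convergence, as $\dot x^{(N)}_i/\vartheta(\rho^{(N)}_i)\ge0$) yields
\[
\int_0^T\frac{\dot x^{(N)}_i(t)}{\vartheta(\rho^{(N)}_i(t))}\,dt=\frac1\gamma\int_0^T F(t,x^{(N)}_i(t))\,dt-\frac\eps\gamma\big(\dot x^{(N)}_i(T)-\dot x^{(N)}_i(0)\big),
\]
which is bounded uniformly in $i,N$; since $\rho^{(N)}_id^{(N)}_i=1/N$, summing shows the total mass of $\tfrac{e_1^N}{\vartheta(\rho^N)}\,dtdx$ is bounded by $\tfrac T\gamma\max_{[0,T]\times[s,S]}F+\tfrac\eps\gamma\bar V$ uniformly in $N$ (here it matters that $\eps>0$ is fixed). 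By Arzel\`a--Ascoli (the curves $x_0^{(N)},x_N^{(N)}$ are uniformly Lipschitz, monotone and bounded) and Banach--Alaoglu we obtain a single subsequence realising \eqref{eq:limitsa}--\eqref{eq:limitsd}. Nonnegativity of $\rho^N,e_1^N,e_2^N,\tfrac{e_1^N}{\vartheta(\rho^N)}$ and of $\rho^N(0),e_1^N(0)$, the bounds $\|\rho\|_{L^\infty}\le\bar\rho$, $\|\rho^0\|_{L^\infty}\le\bar\rho$, the fact that $e_1^0\in L^\infty(\R)^+$, and — since all supports lie in $[0,T]\times[s,S]$ — the compactness of the (essential) supports of $\rho,e_1,e_2,\lambda,\rho^0,e_1^0$, all pass to the limit; this gives \ref{ci}, \ref{cii} and the stated properties of $\rho^0,e_1^0$.

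\textbf{Step 2: the continuity equation.} Fix $\varphi\in C^1([0,T]\times\R)$ with $\varphi(T)\equiv0$ and set $\overline\varphi_i(t):=\tfrac1{d^{(N)}_i(t)}\int_{x^{(N)}_i(t)}^{x^{(N)}_{i+1}(t)}\varphi(t,\cdot)$. Then $\int_\R\rho^N(t,\cdot)\varphi(t,\cdot)=\tfrac1N\sum_i\overline\varphi_i(t)$ is $C^1$ in $t$, and differentiating (Leibniz rule, then an integration by parts on the right-hand side) produces the \emph{exact} identity
\[
\frac{d}{dt}\int_\R\rho^N\varphi\,dx=\int_\R\rho^N\partial_t\varphi\,dx+\int_\R\widetilde e_1^N\partial_x\varphi\,dx,
\]
where $\widetilde e_1^N$ is obtained from $e_1^N$ by replacing, cell by cell, the constant velocity $\dot x^{(N)}_i$ with the affine interpolant between $\dot x^{(N)}_i$ and $\dot x^{(N)}_{i+1}$. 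A bookkeeping estimate (Abel summation in $i$, together with $\sum_id^{(N)}_i\le S-s$ and $\rho^{(N)}_id^{(N)}_i=1/N$) shows $\widetilde e_1^N-e_1^N\to0$ weakly-$*$ in $L^\infty$, so $\widetilde e_1^N\overset{*}{\rightharpoonup}e_1$ as well. Integrating the identity over $[0,T]$, using $\varphi(T)\equiv0$, and letting $N\to\infty$ via \eqref{eq:limitsb}--\eqref{eq:limitsc} (the compactness of supports allows testing the weak-$*$ limits against the, a priori non-compactly-supported, $\varphi$) yields the first equation in \eqref{eq:weakcontlimit}.

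\textbf{Step 3: the momentum equation.} With the same notation, $\int_\R e_1^N(t,\cdot)\varphi(t,\cdot)=\tfrac1N\sum_i\dot x^{(N)}_i(t)\overline\varphi_i(t)$ is continuous on $[0,T]$ and $C^1$ on $(0,T]$. Differentiating on $(0,T)$, inserting $\eps\ddot x^{(N)}_i=F(t,x^{(N)}_i)-\gamma\dot x^{(N)}_i/\vartheta(\rho^{(N)}_i)$, and rearranging as in Step~2, one gets
\[
\eps\frac{d}{dt}\int_\R e_1^N\varphi\,dx=\int_\R\rho^N F\varphi\,dx-\gamma\int_\R\frac{e_1^N}{\vartheta(\rho^N)}\varphi\,dx+\eps\int_\R e_1^N\partial_t\varphi\,dx+\eps\int_\R e_2^N\partial_x\varphi\,dx+R^N(t),
\]
where the second, third and fourth terms on the right arise \emph{exactly} — using the cell averages $\overline\varphi_i$ instead of point values $\varphi(x^{(N)}_i)$ makes the $\vartheta$-term exact, and the precise piecewise-linear definition \eqref{eq:definterpolants} of $e_2^N$ makes the flux term exact (this is the reason for that choice of interpolant) — while the drift term leaves an error with $\sup_t|R^N(t)|\le C_\varphi\,\mathrm{Lip}_x(F)/N\to0$, the Lipschitz-in-$x$ assumption on $F$ being used here. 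The right-hand side lies in $L^1(0,T)$ (the $\vartheta$-term by Step~1, the rest bounded), so $t\mapsto\int_\R e_1^N\varphi$ is absolutely continuous on $[0,T]$; integrating over $[0,T]$, using $\varphi(T)\equiv0$, and letting $N\to\infty$ via \eqref{eq:limitsb}--\eqref{eq:limitsd} yields the second equation in \eqref{eq:weakcontlimit}. \emph{The genuine obstacle of the whole proof is this last passage to the limit in the degenerate term $\gamma\int\tfrac{e_1^N}{\vartheta(\rho^N)}\varphi$}: the vanishing of $\vartheta(\rho)$ threatens compactness, and it is salvaged only because Proposition~\ref{prop:nosaturated} keeps $\rho^{(N)}_i$ strictly below $\bar\rho$ (so the division is legitimate) and because the ODE rewrites $\dot x^{(N)}_i/\vartheta(\rho^{(N)}_i)$ as $\tfrac1\gamma(F-\eps\ddot x^{(N)}_i)$, whose time integral telescopes to the uniform mass bound of Step~1.

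\textbf{Step 4: localisation of the supports.} If $x_0>\overline x(t_0)$, then by the uniform convergence $x^{(N)}_N\to\overline x$ we have $x_0>x^{(N)}_N(t)$, hence $\rho^N=e_1^N=e_2^N=\tfrac{e_1^N}{\vartheta(\rho^N)}=0$, on a full neighbourhood of $(t_0,x_0)$ for all large $N$; symmetrically for $x_0<\underline x(t_0)$ via $x^{(N)}_0\to\underline x$. Passing to the limit and using the continuity of $\underline x,\overline x$, all four limit objects are supported in $\bigcup_{t\in[0,T]}\{t\}\times[\underline x(t),\overline x(t)]$, which completes the proof.
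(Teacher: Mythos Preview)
Your proof is correct and follows essentially the same route as the paper: the uniform bounds in Step~1 coincide with the paper's Proposition~5.1 (including the key trick of rewriting $\dot x_i/\vartheta(\rho_i)$ via the ODE to get the $L^1$ mass bound), Step~3 reproduces the paper's Propositions~5.4--5.5 (the paper likewise exploits that the piecewise-affine choice of $e_2^N$ makes all terms exact except the $F$-error), and Step~4 matches the paper's support argument. The only cosmetic difference is in Step~2: where the paper keeps $e_1^N$ and computes an explicit Taylor-type remainder $\mathcal R^N(\varphi)/N$ (Propositions~5.2--5.3), you instead introduce the auxiliary affine interpolant $\widetilde e_1^N$ for which the continuity equation is \emph{exact}, and then show $\widetilde e_1^N-e_1^N\to 0$. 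The two computations are equivalent---your Abel-summation estimate of $\int(\widetilde e_1^N-e_1^N)\psi$ for Lipschitz $\psi$ is precisely the paper's estimate of $\mathcal R^N$---and both require first taking $\varphi\in C^2$ and then passing to $C^1$ by density; your phrasing ``$\widetilde e_1^N-e_1^N\to 0$ weakly-$*$ in $L^\infty$'' is justified a posteriori by the uniform $L^\infty$ bound plus convergence against the dense class of smooth test functions.
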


	\begin{remark}
		We point out that in \eqref{eq:limitsd} dividing by $\vartheta(\rho^N)$ is allowed since Proposition~\ref{prop:nosaturated} and Remark~\ref{rmk:nosaturated} ensure that $\vartheta(\rho_i(t))>0$ for all $t\in (0,T]$ and $i=0,\ldots, N-1$.
	\end{remark}
	
		\begin{remark}
			We stress that Theorem~\ref{thm:maincont} mainly states the weak$^*$ convergence of the quantities $\rho^N,e_k^N,$ etc., to certain limits. The fact that such limits solve \eqref{eq:contlimit} in the very weak sense of Definition~\ref{def:solcont} is somehow a byproduct of the convergence result. Indeed, requiring that four unrelated objects satisfy two partial differential equations \eqref{eq:weakcontlimit} is a rather poor requirement.
			
			We finally observe that, as the Reader may easily verify, our strategy also yields weak$^*$ convergence in $L^\infty((0,T)\times\R)$ of the discrete velocities
			\begin{equation*}
				u^N(t,x):=\sum_{i=0}^{N-1}\dot{x}^{(N)}_i(t)\ind{[x_i^{(N)}(t),x^{(N)}_{i+1}(t))}(x),
			\end{equation*}
			to a limit velocity $u$. Since $e_1^N=\rho^N u^N$ and somehow $e_2^N=\rho^N (u^N)^2$, it is natural to conjecture that the structure $e_k=\rho u^k$ shall be preserved also in the limit. This is certainly true if the convergence of $u^N$ to $u$ is strong, but this property is still under study (we actually expect also $\rho^N$ to converge strongly).
			
			A second natural conjecture, which we are not able to verify for the moment, regards the measure $\lambda$: we expect that its density with respect to the Lebesgue measure is exactly $\frac{e_1}{\vartheta(\rho)}\chi_{\{\rho<\bar\rho\}}$.
	\end{remark}
	
	\subsection{Asymptotics to first-order dynamics}
	A further problem we want to discuss is the joint many-particle limit and vanishing-inertia type analysis for the discrete model~\eqref{eq:system}. More precisely, we now allow the small parameter $\varepsilon$ in front of the second-order derivative to depend on $N$, i.e. $\eps=\eps_N$, and we let $\eps_N \to 0$ as $N \to \infty$. In this way we aim at recovering the first order traffic model with mobility analysed in \cite{DiFrancescoRosini,DiFrancescoStivaletta,FagioliTse,RadiciStra}, thus rigorously justifying it as an approximation of a more precise second order model. Indeed, at least formally, by letting the parameter $\eps \to 0$ in~\eqref{eq:contlimit}, one obtains the following system
	\begin{equation}\label{eq:vaniert}
		\begin{cases}
			\partial_t{\rho}+\partial_x(\rho u)=0,&\text{in }(0,T)\times \R,\\
			\gamma \rho u=\rho\vartheta(\rho)F,&\text{in }(0,T)\times \R,\\
			\rho(0)=\rho^0,
		\end{cases}
	\end{equation}
	which can be equivalently written as 
	\begin{equation*}
		\begin{cases}
			\partial_t{\rho}+\frac{1}{\gamma}\partial_x(\rho\vartheta(\rho) F)=0,&\text{in }(0,T)\times \R,\\
			\rho(0)=\rho^0,
		\end{cases}
	\end{equation*}
	which is exactly the problem considered in \cite{DiFrancescoRosini} when $F=1$ and $\gamma=1$.
	
	We will show that, still in the particular case \eqref{eq:zeta=theta}, the joint limit $\eps_N \to 0$, $N \to \infty$ of the microscopic system~\eqref{eq:system} provides a weak solution of~\eqref{eq:vaniert}, in a sense specified below. The notion of solution we recover in the limit is motivated by the formal observation that, whenever $\vartheta(\rho) > 0$, the second equation in~\eqref{eq:vaniert} reads as \[
	\frac 1\gamma\rho F = \frac{e_1}{\vartheta(\rho)},
	\]
	where $e_1=\rho u$ represents the first moment.
	
	More precisely, in Section~\ref{sec:vaninertia} we will prove the following result.
	\begin{theorem}\label{thm:vaninert}
		Consider the same assumptions as in Theorem~\ref{thm:maincont} and let $\eps= \eps_N$ in the particle system~\eqref{eq:system} be such that $\eps_N \to 0$ as $N \to \infty$. Then, up to a nonrelabelled subsequence, the following convergences as $N \to \infty$ hold for the quantities introduced in \eqref{eq:definterpolants}: 
		\begin{subequations}
			\begin{align}\label{eq:limits4}
				&x_0^{(N)}\rightarrow \underline x,\qquad\quad\qquad x_N^{(N)}\rightarrow \overline x,&&\text{uniformly in }[0,T];\\
				&\rho^{N}(0)\overset{*}{\rightharpoonup} \rho^0,&&\text{weakly$^*$ in $L^\infty(\R)$};\label{eq:limits4b}\\
				& \rho^N\overset{*}{\rightharpoonup} \rho, \qquad\qquad\qquad e_1^N\overset{*}{\rightharpoonup}e_1, &&\text{weakly$^*$ in $L^\infty((0,T)\times\R)$}; \label{eq:limits4c}   \\
				&\frac{e_1^N}{\vartheta(\rho^N)}\,dtdx\overset{*}{\rightharpoonup} \frac 1\gamma \rho F\,dtdx, &&\text{weakly$^*$ in $\mathcal M([0,T]\times\R)$}.\label{eq:limits4d}
			\end{align}
		\end{subequations}
		
		Moreover, $\|\rho^0\|_{L^\infty(\R)} \leq \bar{\rho}$ and $\rho^0$ is nonnegative and has compact support; also, $\rho$ and $e_1$ are nonnegative, $\|\rho\|_{L^\infty((0,T)\times \R)}\leq \bar \rho$, and their supports are contained in the set 
		\[
		\bigcup_{t\in[0,T]}\{t\}\times [\underline x(t),\overline x(t)]. 
		\]
		
		Finally, the pair $(\rho,e_1)$ solves the transport equation with initial datum $\rho^0$ in the sense that for all $\varphi \in C^1([0,T] \times \R)$ with $\varphi(T) \equiv 0$ one has 
		\begin{equation}\label{eq:CE}
			\int_0^T \int_\R (\rho \partial_t{\varphi} +e_1\partial_x\varphi)\,dxdt = -\int_{\R}\rho^0\varphi(0)\,dx.
		\end{equation}
	\end{theorem}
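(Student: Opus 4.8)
The plan is to retrace the proof of Theorem~\ref{thm:maincont}, relying on the fact that all the a priori estimates obtained there are independent of $\eps$, and then to let the $\eps$-weighted terms disappear in the joint limit. First I would record the $\eps$-uniform bounds: the velocity estimate \eqref{eq:uniflipbound} together with \eqref{eq:initialassumption} produces a constant $C'>0$, independent of both $N$ and $\eps$, with $0\le\dot x_i^{(N)}(t)\le C'$ for a.e.\ $t$ (velocities being nonnegative by~\ref{hii}); since moreover $\rho_i^{(N)}(t)<\bar\rho$ for all $i$ by Proposition~\ref{prop:nosaturated} and Remark~\ref{rmk:nosaturated}, the interpolants \eqref{eq:definterpolants} obey $0\le\rho^N\le\bar\rho$, $0\le e_1^N\le\bar\rho\,C'$ and $0\le e_2^N\le\bar\rho\,(C')^2$ uniformly in $N$ and $\eps$ (the linear factor in $e_2^N$ lies between $\dot x_i^{(N)}$ and $\dot x_{i+1}^{(N)}$), while $x_0^{(N)}$ and $x_N^{(N)}$ are uniformly Lipschitz. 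Banach--Alaoglu and Arzel\`a--Ascoli then give, along a subsequence, the convergences \eqref{eq:limits4}, \eqref{eq:limits4b} and \eqref{eq:limits4c}, and, exactly as in Theorem~\ref{thm:maincont}, that $\rho^0$ is nonnegative with compact support and $\|\rho^0\|_{L^\infty}\le\bar\rho$, that $\rho,e_1\ge 0$ with $\|\rho\|_{L^\infty}\le\bar\rho$, and that all limit supports sit inside $\bigcup_t\{t\}\times[\underline x(t),\overline x(t)]$.

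Next I would deal with the measure in \eqref{eq:limits4d}. Since $\zeta\equiv\vartheta$ by \eqref{eq:zeta=theta} and $\vartheta(\rho_i^{(N)}(t))>0$ for $t\in(0,T]$, dividing \eqref{eq:system} by $\vartheta(\rho_i^{(N)})$ gives $\gamma\,\dot x_i^{(N)}/\vartheta(\rho_i^{(N)})=F(t,x_i^{(N)})-\eps_N\ddot x_i^{(N)}$ on $(0,T)$; this also shows $\eps_N\ddot x_i^{(N)}\le F$ pointwise, so that $\ddot x_i^{(N)}$ and $\dot x_i^{(N)}/\vartheta(\rho_i^{(N)})$ are integrable in time. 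Using $\rho_i^{(N)}(x_{i+1}^{(N)}-x_i^{(N)})=1/N$ and $\int_0^T\ddot x_i^{(N)}\,dt=\dot x_i^{(N)}(T)-\dot x_i^{(N)}(0)$, I obtain
\begin{equation*}
\int_0^T\!\!\int_\R\frac{e_1^N}{\vartheta(\rho^N)}\,dxdt=\frac{1}{\gamma N}\sum_{i=0}^{N-1}\int_0^T F\big(t,x_i^{(N)}(t)\big)\,dt-\frac{\eps_N}{\gamma N}\sum_{i=0}^{N-1}\big(\dot x_i^{(N)}(T)-\dot x_i^{(N)}(0)\big),
\end{equation*}
whose right-hand side is bounded by $\tfrac{T}{\gamma}\max F+\tfrac{2C'}{\gamma}\eps_N$, hence uniformly in $N$. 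Thus the nonnegative measures $\tfrac{e_1^N}{\vartheta(\rho^N)}\,dtdx$ have uniformly bounded mass and, up to a further subsequence, converge weakly-$*$ in $\mathcal M([0,T]\times\R)$ to some $\lambda\ge 0$ supported in the same region. Moreover, since $x\mapsto F(t,x)$ is Lipschitz uniformly in $t$ and $\sum_i(x_{i+1}^{(N)}-x_i^{(N)})=x_N^{(N)}-x_0^{(N)}\le S$, the first term above differs from $\tfrac1\gamma\int_0^T\!\int_\R\rho^N F\,dxdt$ by $O(1/N)$ and therefore tends to $\tfrac1\gamma\int_0^T\!\int_\R\rho F\,dxdt$, while the second term is $O(\eps_N)$; hence $\lambda([0,T]\times\R)=\tfrac1\gamma\int_0^T\!\int_\R\rho F\,dxdt$.

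It then remains to pass to the limit in the equations and to identify $\lambda$. As in the proof of Theorem~\ref{thm:maincont}, I would show that the interpolants satisfy, for every $\varphi\in C^1([0,T]\times\R)$ with $\varphi(T)\equiv 0$, both identities in \eqref{eq:weakcontlimit} with $\eps=\eps_N$, up to remainders that vanish as $N\to\infty$. The first one contains no $\eps$, and using $\rho^N\overset{*}{\rightharpoonup}\rho$, $e_1^N\overset{*}{\rightharpoonup}e_1$ and \eqref{eq:limits4b} it yields exactly \eqref{eq:CE}. In the second one the terms $\eps_N\int_0^T\!\int_\R(e_1^N\partial_t\varphi+e_2^N\partial_x\varphi)\,dxdt$ and $\eps_N\int_\R e_1^{0N}\varphi(0)\,dx$ tend to $0$, since their integrands are bounded uniformly in $N$ by the estimates above and $\eps_N\to 0$; because $\rho^N F\overset{*}{\rightharpoonup}\rho F$ and $\tfrac{e_1^N}{\vartheta(\rho^N)}\,dtdx\overset{*}{\rightharpoonup}\lambda$, one is left with $\gamma\int_{[0,T]\times\R}\varphi\,d\lambda=\int_0^T\!\int_\R\rho F\varphi\,dxdt$ for every such $\varphi$. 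Testing with $\varphi\in C^1_c((0,T)\times\R)$ shows that $\lambda$ and $\tfrac1\gamma\rho F\,dtdx$ agree on Borel subsets of $(0,T)\times\R$; together with the equality of total masses proved above (and the analogous estimate localized near $t=0$ and near $t=T$, which forces $\lambda$ to charge neither end slice) this gives $\lambda=\tfrac1\gamma\rho F\,dtdx$, that is \eqref{eq:limits4d}.

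Conceptually, the main obstacle is the $\eps$-uniformity itself: one has to check that the whole chain of estimates behind Theorem~\ref{thm:maincont} — the velocity bound, the $L^\infty$ bounds on the moments, the equicontinuity of $x_0^{(N)},x_N^{(N)}$, and the consistency error of the discrete weak formulation — never secretly depends on $\eps$, so that $\eps_N\to 0$ may be taken freely. Once this is granted the limit passage is soft; the only genuinely delicate technical point is pinning down $\lambda$ up to the terminal time $t=T$, which is why the total-mass computation is needed in addition to the weak formulation against test functions vanishing at $t=T$.
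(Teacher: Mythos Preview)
Your argument is correct and follows the same overall strategy as the paper: reuse the $\eps$-independent a priori bounds behind Theorem~\ref{thm:maincont}, extract weak limits, pass to the limit in the discrete continuity equation to get~\eqref{eq:CE}, and let the $\eps_N$-weighted terms in the momentum equation vanish to identify $\lambda$.

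The one place where you take a genuinely different (and longer) route is the identification of $\lambda$. You keep the constraint $\varphi(T)\equiv 0$ inherited from Proposition~\ref{prop:4}, which only yields $\gamma\int\varphi\,d\lambda=\int\rho F\varphi$ for such $\varphi$; you then compensate by a separate total-mass computation to rule out concentration of $\lambda$ on the time slices $\{0\}\times\R$ and $\{T\}\times\R$. The paper instead simply redoes the computation of Proposition~\ref{prop:4} for \emph{arbitrary} $\varphi\in C^1([0,T]\times\R)$, picking up the additional boundary term $-\eps_N\int_\R e_1^N(T)\varphi(T)\,dx$; since $e_1^N(T)$ is uniformly bounded, this term also vanishes with $\eps_N$, and one obtains $\gamma\int\varphi\,d\lambda=\int\rho F\varphi$ for all $\varphi\in C^1$, hence by density for all $\varphi\in C^0$, which gives $\lambda=\tfrac1\gamma\rho F\,dtdx$ at once. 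Your detour works, but the paper's trick of dropping the terminal-time constraint is shorter and avoids the endpoint bookkeeping you flag as ``the only genuinely delicate technical point''.
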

	
		\section{A toy example of a traffic light}\label{sec:trafficlight}
		In this section we show how our second order traffic model may describe in a realistic way the situation of a traffic light which switches to red and after a certain time switches back to green. For the sake of simplicity, we consider the case of three vehicles, so $N=2$, but we point out that this example can be easily extended to an arbitrary number of vehicles.
		
		We fix $\eps=\gamma=1$, and we define the alertness and congestion functions $\zeta,\vartheta$ as
		\begin{equation}\label{eq:xitheta}
			\zeta(\rho)=\begin{cases}
				1,\qquad &\text{if }\rho\in[0,\underline\rho_\zeta],\\
				\frac{\overline \rho_\zeta-\rho}{\overline \rho_\zeta-\underline\rho_\zeta}, &\text{if }\rho\in[\underline\rho_\zeta,\overline \rho_\zeta],\\
				0,\qquad &\text{if }\rho\in[\overline\rho_\zeta,+\infty),
			\end{cases}
			\qquad 
			\vartheta(\rho)=\begin{cases}
				1,\qquad &\text{if }\rho\in[0,\underline\rho_\vartheta],\\
				\frac{\overline \rho_\vartheta-\rho}{\overline \rho_\vartheta-\underline\rho_\vartheta}, &\text{if }\rho\in[\underline\rho_\vartheta,\overline \rho_\vartheta],\\
				0,\qquad &\text{if }\rho\in[\overline\rho_\vartheta,+\infty),
			\end{cases}
		\end{equation}
		where $0<\underline\rho_\zeta<\overline \rho_\zeta<+\infty$, $0<\underline\rho_\vartheta<\overline \rho_\vartheta<+\infty$ are fixed constants satisfying $\bar\rho_\vartheta\ge \bar \rho_\zeta$.
		
		Let us also set $V>0$ as the speed limit of the road. We now propose a function $F_{TL}\colon \R\to [0,V]$ which intends to model a red traffic light placed at the origin $x=0$. It is defined as
		\begin{equation}\label{eq:FTL}
			F_{TL}(x)=
			\begin{cases}
				V,\qquad&\text{if }x\in(-\infty,-S_2),\\
				-\frac{V}{S_2-S_1}(x+S_1),&\text{if }x\in[-S_2,-S_1),\\
				0,\qquad &\text{if }x\in[-S_1,0),\\
				\frac{V}{\delta}x,&\text{if }x\in[0,\delta),\\
				V,\qquad &\text{if }x\in[\delta,+\infty),
			\end{cases}
		\end{equation}
		where $\delta<V$ is a very small parameter, while $S_2>S_1>2(V+2\delta)$.
		
		The idea is that if a vehicle is very far from the red traffic light (on its left), namely $x<-S_2$, then it is not affected by it and so the forcing effect is the same as in a free road $F_{TL}\equiv V$. Instead, if the vehicle comes into a certain range from the red traffic light, namely $x\in [-S_1,0)$, the forcing term equal to $0$ makes it brake. We note that $S_1$ depends on $V$, which is reasonable since the space needed to brake before a traffic light depends clearly on the speed of the vehicle and thus in particular on the speed limit of the road. Finally, if the vehicle is to the right of the traffic light, then again it experiences a free road $F_{TL}\equiv V$. Observe that in \eqref{eq:FTL} there are continuous transitions between the three regimes described above, but we stress that these are not essential and their only role is to make $F_{TL}$ continuous (and clearly they can be chosen smooth, if needed).
		
		The switching between green and red mechanism is introduced through the following time-dependent function:
		\begin{equation}\label{Ftrafficlight}
			F(t,x)=\begin{cases}
				V,\qquad &\text{if }t\in [0,t_{gr}),\\
				V+\frac{F_{TL}(x)-V}{\widetilde t_{gr}-t_{gr}}(t-t_{gr}), &\text{if }t\in [t_{gr},\widetilde t_{gr}),\\
				F_{TL}(x),&\text{if }t\in [\widetilde t_{gr},t_{rg}),\\
				F_{TL}(x)+\frac{V-F_{TL}(x)}{\widetilde t_{rg}-t_{rg}}(t-t_{rg}), &\text{if }t\in [t_{rg},\widetilde t_{rg}),\\
				V,\qquad &\text{if }t\in [\widetilde t_{rg},T].
			\end{cases}
		\end{equation}
		The time $t_{gr}$ denotes the moment in which the traffic light from green becomes red. Thus, before $t_{gr}$, the vehicles move freely on the road ($F\equiv V$), while after the traffic light switches to red (and up to a linear transition) they enter in the setting governed by $F_{TL}$ described above. As we will see below, if a vehicle is in the range $[-S_1,0)$, it will remain there until the traffic light turns back to green at time $t_{rg}$. Up to another short transition, the vehicles start again to experience a free road, and thus they accelerate toward the maximal speed $V$.
		
		Let us now show how the three vehicles behave following system \eqref{eq:system} with drift $F$ as in \eqref{Ftrafficlight}. For the sake of clarity, we set
		\[
		t_{gr}=1+\frac{2\delta}{V},\qquad \widetilde t_{gr}=1+\frac{4\delta}{V},\qquad t_{rg},T>\!\!>1,
		\]
		and we choose the initial position of the vehicles as 
		\[
		x_2^0=-\delta, \qquad x_1^0=-2(V+2\delta),\qquad x_0^0<x_1^0-\frac{1}{3\overline \rho_\zeta}.
		\]
		We also assume that $V\geq \frac{1}{6\overline \rho_\vartheta}$, so that $\rho_i^0\le\overline \rho_\vartheta$ for $i=0,1$. The initial velocities can be chosen arbitrarily within the speed limit, namely
		\[
		v_i^0\in [0,V],\qquad \text{for all }i=0,1,2.
		\]
		Thanks to Theorem~\ref{thm:maindiscrete}, there exists a solution to the traffic model~\eqref{eq:system} with this drift and these initial data. Moreover, the choice of $F$ allows us to deduce, again from Theorem~\ref{thm:maindiscrete}, that $0\leq \dot{x}_i(t)\leq V$ for all $i=0,1,2$, thus all vehicles respect the speed limit.
		
		\subsection{The vehicle in front}
		In the first time interval $[0,t_{gr}]$ the vehicle $x_2$ solves (note that $\zeta(\rho_2)\equiv 1$, since $x_2$ is the first vehicle)
		\begin{equation}\label{eq:free}
			\ddot{x}_2(t)+\dot{x}_2(t)=V,
		\end{equation}
		which has the explicit solution 
		\begin{equation}\label{eq:freesol}
			x_2(t)=x_2^0-(V-v_2^0)(1-e^{-t})+Vt.
		\end{equation}
		Moreover, the time $t_{gr}$ has been chosen so that \[
		x_2(t_{gr})\geq x_2^0+V(t_{gr}-1)=\delta>0,
		\]
		namely $x_2$ has overcome the traffic light before it switched to red, and thus equation \eqref{eq:free} is actually solved in the whole $(0,T)$, whence \eqref{eq:freesol} describes the evolution of $x_2$ for all times $t\in [0,T]$.
		
		\subsection{The vehicle in between}
		This vehicle will be influenced by the traffic light becoming red. First of all, due to \eqref{eq:uniflipbound}, we note that for $t\in[0,t_{gr})$ there holds
		\[
		x_1^0\leq x_1(t)\leq x_1^0+Vt_{gr}=-2(V+2\delta)+V+2\delta=-(V+2\delta)=\frac{x_1^0}{2}.
		\]
		Analogously, in the transition region $[t_{gr},\widetilde t_{gr}]$, we have 
		\[
		x_1^0\leq x_1(t)\leq \frac{x_1^0}{2}+V(\widetilde t_{gr}-t_{gr})=\frac{x_1^0}{2}+2\delta= -V,
		\] 
		and in particular $x_1$ is in the region where $F_{TL}=0$ when the traffic light becomes red (and after the transition regime). Until the traffic light is red, the vehicle $x_1$ thus solves the equation\[
		\zeta(\rho_1(t))\ddot{x}_1(t)+\dot{x}_1(t)=0,
		\]
		whence, following formula \eqref{eq:doty}, its velocity satisfies 
		\[
		0\leq \dot{x}_1(t)=\dot{x}_1(\widetilde t_{gr})e^{-\int_{\widetilde t_{gr}}^t\frac{1}{\zeta(\rho_1(\tau))}\,d\tau}\leq Ve^{-(t-\widetilde t_{gr})}.
		\]
		As a consequence, we infer 
		\[
		x_1(t)=x_1(\widetilde t_{gr})+\int_{\widetilde t_{gr}}^t\dot{x}_1(\tau)\,d\tau\leq x_1(\widetilde t_{gr})+V(1-e^{-(t-\widetilde t_{gr})})\le 0,\qquad \text{for } t\in [\widetilde t_{gr},t_{rg}].
		\]
		
		Summing up, when the traffic light switches to red $x_1$ starts braking, its speed tends exponentially to zero, and the vehicle asymptotically reaches a certain position, always remaining to the left of the traffic light. After $t_{rg}$, namely when the traffic light turns green, since $t_{rg}$ and $T$ are very large and $\widetilde t_{rg}$ is very close to $t_{rg}$ we surely have
		\[
		\zeta(\rho_1(t))=\vartheta(\rho_1(t))=1,\qquad\text{in }[\widetilde t_{rg},T],
		\]
		thus $x_1$ solves in $[\widetilde t_{rg},T]$ the following equation\[
		\ddot{x}_1(t)+\dot{x}_1(t)=V,
		\]
		hence therein it has the explicit form\[
		x_1(t)=x_1(\widetilde t_{rg})-(V-\dot{x}(\widetilde t_{rg}))(1-e^{-(t-\widetilde t_{rg})})+V(t-\widetilde t_{rg}).
		\]
		This means that $x_1$ starts accelerating towards velocity $V$ as soon as the traffic light switches back to green.
		
		\subsection{The vehicle behind}
		The last vehicle will be affected both by the traffic light and by its proximity with the second vehicle $x_1$. The evolution of $x_0$ in $[0,\widetilde t_{gr})$ follows its equation and it is not explicit, but let us assume, in order to make the situation interesting, that $x_0(\widetilde t_{gr})\geq -S_1$, so that it has reached a position where the effect of the red traffic light can be felt. Hence, in $(\widetilde t_{gr},t_{rg})$ it solves the equation
		\[
		\zeta(\rho_0(t))\ddot{x}_0(t)+\dot{x}_0(t)=0.
		\]
		In the very same way as for $x_1$, also the vehicle $x_0$ is thus braking and its speed exponentially decreases to zero under the effect of the red traffic light. In addition, since the bound $\rho_0(t)\le \bar\rho_\vartheta$ must be satisfied by the solution, it follows that $x_0(t)\le x_1(t)-\frac{1}{3\bar\rho_\vartheta}$, namely $x_0$ remains spaced to the left of $x_1$.
		Moreover, if for some time $\overline t<t_{rg}$ it happens that $\rho_0(\overline t)=\overline \rho_\zeta$, namely if $x_0$ becomes too close to $x_1$, then from the equation one has $\dot{x}_0(\overline t)=0$ and the vehicle stops immediately. In this case, we then deduce that $\dot{x}_0(t)=0$ for all $t\in [\bar t,t_{rg}]$. This means that, if the vehicle $x_0$ stops because it was too close to the vehicle in front of it, then it restarts only when the traffic light becomes green again (and the right-hand side of the equation becomes positive again).

	\section{Preliminary tools}\label{sec:lemmas}
	
	In this section we collect some elementary but useful results regarding the behaviour of solutions to the second order differential equation
	\begin{equation}\label{eq:e1}
		\beta(t)\ddot{y}(t)+\dot y(t)=\alpha(t)F(t,y(t)),\qquad \text{for }t\in (a,b),
	\end{equation}
	where $F\in C^0([a,b]\times\R)$, $\alpha,\beta\in C^0([a,b])$ and $\beta$ is positive in $(a,b)$. The link between \eqref{eq:e1} and \eqref{eq:systempos} is evident.
	
	The first lemma provides a representation formula for the derivative $\dot y$ which will be used often throughout the paper.
	
	\begin{lemma}\label{le:lemmadoty}
		Let $y\in C^2(a,b)$ be a solution to \eqref{eq:e1}. Then, for all $a<s\leq t<b$, the following representation formula holds true:
		\begin{equation}\label{eq:doty}
			\dot y(t)=\dot y(s) e^{-\int_s^t\frac{1}{\beta(r)}\, dr}+\int_s^t \frac{\alpha(\tau)F(\tau,y(\tau))}{\beta(\tau)}e^{-\int_\tau^t\frac{1}{\beta(r)}\, dr}\, d\tau.
		\end{equation}
		In particular, if in addition $\alpha$ and $F$ are nonnegative, and if $y\in C^1([a,b))$ fulfils $\dot y(a)\ge 0$, then for all $t\in [a,b)$ one has
		\begin{equation}\label{eq:bounddoty}
			0\le \dot y(t)\le \dot y(a)\vee \left(\max\limits_{\tau\in[a,t]} \alpha(\tau) F(\tau,y(\tau))\right).
		\end{equation}
	\end{lemma}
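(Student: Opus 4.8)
The plan is to prove the representation formula \eqref{eq:doty} first, and then deduce the bound \eqref{eq:bounddoty} from it by elementary estimates. The key observation is that equation \eqref{eq:e1}, on the interval where $\beta>0$, is a linear first-order ODE for the unknown $\dot y$. Dividing \eqref{eq:e1} by $\beta(t)>0$ (which is legitimate for $t$ in the open interval $(a,b)$), I rewrite it as
\[
\ddot y(t)+\frac{1}{\beta(t)}\dot y(t)=\frac{\alpha(t)F(t,y(t))}{\beta(t)}.
\]
This is of the form $\dot v + p(t)v = q(t)$ with $v=\dot y$, $p(t)=1/\beta(t)$ and $q(t)=\alpha(t)F(t,y(t))/\beta(t)$; note that $p$ and $q$ are continuous on any $[s,t]\subset(a,b)$ since $\beta$ is continuous and positive there, $\alpha$ is continuous, and $F(\cdot,y(\cdot))$ is continuous because $y\in C^2(a,b)\subset C^0$ and $F\in C^0$. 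Multiplying by the integrating factor $e^{\int_s^r p}$ and integrating from $s$ to $t$ gives exactly \eqref{eq:doty}; this is the standard variation-of-constants computation, so I would just state it.

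For the second part, I assume in addition that $\alpha,F\ge 0$, that $y\in C^1([a,b))$, and that $\dot y(a)\ge 0$. The subtle point is that \eqref{eq:doty} was derived only for $a<s\le t<b$, i.e. on the open interval, whereas the claimed bound \eqref{eq:bounddoty} is on $[a,b)$ including the endpoint $a$; here I use the continuity hypothesis $y\in C^1([a,b))$ to pass to the limit $s\to a^+$ in \eqref{eq:doty}. Since $1/\beta$ might blow up near $a$, the exponential factors $e^{-\int_s^t (1/\beta)}$ and $e^{-\int_\tau^t(1/\beta)}$ could degenerate; however they are bounded by $1$ (the integrand $1/\beta$ being nonnegative on $(a,b)$), so in the limit $s\to a^+$ the first term $\dot y(s)e^{-\int_s^t(1/\beta)}$ stays nonnegative and bounded above by $\dot y(a)$ (using $\dot y(s)\to\dot y(a)\ge 0$ and the exponential factor in $[0,1]$), while the integral term, whose integrand is nonnegative, is bounded above by $\int_a^t \frac{\alpha(\tau)F(\tau,y(\tau))}{\beta(\tau)}e^{-\int_\tau^t(1/\beta)}\,d\tau$, which may be a priori improper at $a$ but is controlled as follows.

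To close the estimate, I bound $\alpha(\tau)F(\tau,y(\tau))\le M:=\max_{\tau\in[a,t]}\alpha(\tau)F(\tau,y(\tau))$ and factor it out, leaving $M\int_a^t \frac{1}{\beta(\tau)}e^{-\int_\tau^t(1/\beta)}\,d\tau = M\big[e^{-\int_\tau^t(1/\beta)}\big]_{\tau=a}^{\tau=t} = M\big(1-e^{-\int_a^t(1/\beta)}\big)\le M$, where the antiderivative identity $\frac{d}{d\tau}e^{-\int_\tau^t(1/\beta)}=\frac{1}{\beta(\tau)}e^{-\int_\tau^t(1/\beta)}$ is used and the improper integral at $a$ is harmless because the integrand of the exponential is nonnegative so the primitive $e^{-\int_\tau^t(1/\beta)}$ extends continuously (with value in $[0,1]$) to $\tau=a$. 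Combining the two pieces gives $\dot y(t)\le \dot y(a)\cdot 1 + M\cdot 1$; to get the sharper form with $\vee$ rather than $+$, I instead keep the convex-combination structure: the first exponential factor equals some $\theta\in[0,1]$ and then $\dot y(t)\le \theta\dot y(a)+(1-\theta)M\le \dot y(a)\vee M$. The lower bound $\dot y(t)\ge 0$ is immediate since both terms in \eqref{eq:doty} (after the limit $s\to a$) are nonnegative. The main obstacle, and the only place requiring care, is precisely this passage to the endpoint $a$ where $1/\beta$ is not integrable in general; the resolution is that only the bounded primitive $e^{-\int_\tau^t(1/\beta)}$ ever appears, so no integrability of $1/\beta$ near $a$ is actually needed. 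For $t=a$ the bound \eqref{eq:bounddoty} is trivial.
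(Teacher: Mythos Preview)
Your proof is correct and follows essentially the same approach as the paper: both derive \eqref{eq:doty} via the standard integrating factor for the linear first-order ODE satisfied by $\dot y$, and both obtain \eqref{eq:bounddoty} from the convex-combination structure $\theta\,\dot y(\cdot)+(1-\theta)M$ with $\theta=e^{-\int 1/\beta}\in[0,1]$. The only cosmetic difference is the order of operations in the second part: the paper keeps $s>a$, bounds to get $\dot y(t)\le \dot y(s)\vee M$, and then sends $s\searrow a$ using $y\in C^1([a,b))$; you instead pass to the limit $s\to a^+$ in the representation formula first and then bound, which requires (and you correctly provide) the remark that only the bounded primitive $e^{-\int_\tau^t 1/\beta}$ appears so no integrability of $1/\beta$ near $a$ is needed.
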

	
	\begin{proof}
		In order to obtain \eqref{eq:doty} it is enough to divide equation~\eqref{eq:e1} by $\beta(\tau)$ and to multiply it by $e^{\int_s^\tau\frac{1}{\beta(r)}\, d r}$, deducing 
		\begin{equation*}
			\frac{{\rm d}}{{\rm d}\tau}\left(\dot y(\tau)e^{\int_s^\tau\frac{1}{\beta(r)}\, d r}\right)=\frac{\alpha(\tau)F(\tau,y(\tau))}{\beta(\tau)}e^{\int_s^\tau\frac{1}{\beta(r)}\, d r}.
		\end{equation*}
		Then one concludes by integrating between $s$ and $t$.
		
		Under the additional assumptions, the lower bound $\dot y(t)\ge 0$ is then immediate. The upper bound in \eqref{eq:bounddoty} instead can be proved as follows: we first estimate by using \eqref{eq:doty}
		\begin{align*}
			\dot y(t)&\le \dot y(s) e^{-\int_s^t\frac{1}{\beta(r)}\, dr}+\max\limits_{\tau\in[s,t]} \alpha(\tau) F(\tau,y(\tau))\int_s^t\frac{{\rm d}}{{\rm d}\tau} e^{-\int_\tau^t\frac{1}{\beta(r)}\, dr}\, d\tau\\
			&\le \dot y(s) e^{-\int_s^t\frac{1}{\beta(r)}\, dr}+\max\limits_{\tau\in[a,t]} \alpha(\tau) F(\tau,y(\tau))\left(1- e^{-\int_s^t\frac{1}{\beta(r)}\, dr}\right)\\
			&\le \dot y(s)\vee\left(\max\limits_{\tau\in[a,t]} \alpha(\tau) F(\tau,y(\tau))\right),
		\end{align*}
		and then we send $s\searrow a$.
	\end{proof}
	
	Next lemma concerns the behaviour of the derivative $\dot y$ near the boundary points $a$ and $b$ whenever $\beta$ vanishes at some of them. As the Reader may notice comparing equation \eqref{eq:e1} with \eqref{eq:vb} and \eqref{eq:va}, the result is not surprising. In fact, it would be trivial if one had some a priori control on the second derivative $\ddot y$. The power of the lemma, which exploits formula \eqref{eq:doty}, consists indeed in the validity of the result without assuming any bound on the second derivative.
	
	\begin{lemma}\label{le:lemmadotcont}
		Let $y\in C^2(a,b)\cap C^0([a,b])$ be a solution to \eqref{eq:e1}, and suppose in addition that $\alpha$ and $F$ are nonnegative.
		\begin{subequations}
			If $\beta(b)=0$ and $\beta$ is Lipschitz continuous in a left neighborhood of $b$, then there holds 
			\begin{equation}\label{eq:vb}
				\lim_{t\to b^-}\dot{y}(t)=\alpha(b)F(b,y(b)).
			\end{equation}
			
			If $\beta(a)=0$ and if in a right neighborhood of $a$ both $\beta$ is Lipschitz continuous and $\dot{y}$ is bounded, then there also holds
			\begin{equation}\label{eq:va}
				\lim_{t\to a^+}\dot{y}(t)=\alpha(a)F(a,y(a)).
			\end{equation}  
		\end{subequations}
	\end{lemma}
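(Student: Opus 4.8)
The plan is to use the representation formula \eqref{eq:doty} from Lemma~\ref{le:lemmadoty}, tracking carefully what happens to the exponential weights $e^{-\int_\tau^t\frac{1}{\beta(r)}\,dr}$ as the endpoint where $\beta$ vanishes is approached. The key structural fact to exploit is that near a point where $\beta$ has a simple zero (which is forced, up to the order, by $\beta$ having bounded derivative together with $\beta>0$ in the interior), the integral $\int \frac{1}{\beta(r)}\,dr$ diverges logarithmically, so the weight $e^{-\int_\tau^t\frac{1}{\beta(r)}\,dr}$ behaves like a power of $\beta$ and in particular tends to $0$ suitably fast. This will kill the ``memory'' term involving $\dot y(s)$ and turn the integral term into a weighted average of $\alpha F$ that concentrates at the bad endpoint, yielding precisely the claimed limit.

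For \eqref{eq:vb}, I would fix $s\in(a,b)$ and write, via \eqref{eq:doty},
\[
\dot y(t)=\dot y(s)\,e^{-\int_s^t\frac{1}{\beta(r)}\,dr}+\int_s^t\frac{\alpha(\tau)F(\tau,y(\tau))}{\beta(\tau)}\,e^{-\int_\tau^t\frac{1}{\beta(r)}\,dr}\,d\tau .
\]
Since $\beta$ is differentiable with $|\dot\beta|\le M$ on $(a,b)$ and $\beta(b)=0$, one has $\beta(r)\le M(b-r)$ on a left neighborhood of $b$, hence $\int_\tau^t\frac{1}{\beta(r)}\,dr\ge \frac1M\log\frac{b-\tau}{b-t}\to+\infty$ as $t\to b^-$ for each fixed $\tau<b$; this shows the first term vanishes. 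For the integral term I would observe that $\frac{1}{\beta(\tau)}e^{-\int_\tau^t\frac{1}{\beta(r)}\,dr}=\frac{d}{d\tau}e^{-\int_\tau^t\frac{1}{\beta(r)}\,dr}$, so the term equals $\int_s^t g(\tau)\,d\mu_t(\tau)$ where $g=\alpha F(\cdot,y(\cdot))$ is continuous on $[s,b]$ and $d\mu_t$ is the probability-like measure $\frac{d}{d\tau}e^{-\int_\tau^t\frac{1}{\beta(r)}\,dr}\,d\tau$ of total mass $1-e^{-\int_s^t\frac{1}{\beta(r)}\,dr}\to 1$. Because the weights concentrate near $\tau=b$ (the mass on any $[s,b-\delta]$ tends to $0$, again by the logarithmic divergence), a standard $\eps/3$ splitting using continuity of $g$ at $b$ gives $\lim_{t\to b^-}\dot y(t)=g(b)=\alpha(b)F(b,y(b))$.

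For \eqref{eq:va}, the structure is the same but the concentration is at the left endpoint: picking $t$ fixed and letting $s\to a^+$ in \eqref{eq:doty} is not the right move; instead I would rewrite \eqref{eq:doty} with a running base point. Fix a reference $t_0\in(a,b)$ and use the semigroup identity to express, for $a<t<t_0$,
\[
\dot y(t_0)=\dot y(t)\,e^{-\int_t^{t_0}\frac{1}{\beta(r)}\,dr}+\int_t^{t_0}\frac{\alpha(\tau)F(\tau,y(\tau))}{\beta(\tau)}\,e^{-\int_\tau^{t_0}\frac{1}{\beta(r)}\,dr}\,d\tau ;
\]
equivalently, reading \eqref{eq:doty} directly with $s=t$ and endpoint $t_0$, one gets $\dot y(t)=\dot y(t_0)e^{\int_t^{t_0}\frac{1}{\beta(r)}\,dr}-\int_t^{t_0}\frac{\alpha F}{\beta}e^{\int_\tau^{t_0}\frac{1}{\beta(r)}\,dr}\,d\tau$. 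Here the boundedness of $\dot y$ near $a$ is exactly what is needed: since $\int_t^{t_0}\frac{1}{\beta(r)}\,dr\to+\infty$ as $t\to a^+$, for the right-hand side to stay bounded the two blowing-up terms must cancel to leading order, and isolating the cancellation gives the limit. Concretely I would set $h(t):=\dot y(t)-\alpha(t)F(t,y(t))$, note $h$ is bounded near $a$, and derive from the equation a first-order relation for $h$ with the integrating factor $e^{\int\frac1\beta}$; an argument analogous to the one in Lemma~\ref{le:lemmadotcont}'s first part (or a direct Gronwall/l'Hôpital-type estimate exploiting $|\dot\beta|\le M$ so that $\alpha F$ does not oscillate too wildly relative to the weight) forces $h(t)\to 0$.

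The main obstacle is the left-endpoint case \eqref{eq:va}: unlike at $b$, formula \eqref{eq:doty} naturally propagates information \emph{forward} from $a$, so one cannot simply average $\alpha F$ against a concentrating measure — one must instead invert the relation and use the a priori boundedness of $\dot y$ to extract the limit, which is why that hypothesis appears. The delicate point there is to quantify the cancellation between $\dot y(t_0)e^{\int_t^{t_0}\frac1\beta}$ and the integral term uniformly enough to conclude; controlling the error requires using the bounded derivative of $\beta$ to compare $e^{\int_\tau^{t_0}\frac1\beta}$ with $e^{\int_t^{t_0}\frac1\beta}$ and the continuity of $\alpha F(\cdot,y(\cdot))$ at $a$. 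Everything else is routine estimation with the exponential weights.
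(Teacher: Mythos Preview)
Your treatment of \eqref{eq:vb} is correct and in fact cleaner than the paper's. The paper splits into two cases according to whether $1/\beta$ is summable near $b$; your observation that $|\dot\beta|\le M$ and $\beta(b)=0$ force $\beta(r)\le M(b-r)$, hence $\int^b 1/\beta=+\infty$, shows that the summable case is vacuous. After that, your concentration argument via $\frac{1}{\beta(\tau)}e^{-\int_\tau^t 1/\beta}=\frac{d}{d\tau}e^{-\int_\tau^t 1/\beta}$ is exactly the paper's Case~1 computation.

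For \eqref{eq:va}, however, there is a genuine gap: you explicitly reject the move ``fix $t$ and let $s\to a^+$ in \eqref{eq:doty}'', but this is precisely the right move, and it is what the paper does. By your own observation one also has $\beta(r)\le M(r-a)$, so $\int_a^t 1/\beta=+\infty$; together with the assumed boundedness of $\dot y$ near $a$ this kills the term $\dot y(s)e^{-\int_s^t 1/\beta}$ as $s\to a^+$, yielding
\[
\dot y(t)=\int_a^t g(\tau)\,\frac{d}{d\tau}e^{-\int_\tau^t 1/\beta}\,d\tau,\qquad g:=\alpha F(\cdot,y(\cdot)),
\]
a unit-mass average of $g$ over $(a,t)$. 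Now simply let $t\to a^+$: the interval $(a,t)$ shrinks to $\{a\}$, so for $t$ in a neighbourhood where $|g-g(a)|<\eps$ one gets $|\dot y(t)-g(a)|\le \eps$ immediately. No concentration analysis, no inversion, no l'H\^opital is needed. Your alternative route via $h=\dot y-\alpha F$ and an unspecified Gronwall/l'H\^opital argument is not carried out and is considerably more delicate than necessary; as written it does not constitute a proof.
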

	
	\begin{proof}
		
		We start showing \eqref{eq:vb}. We fix $\eps>0$, and by continuity let $(b_\eps,b)$ be such that
		\[
		|\alpha(t)F(t,y(t))-\alpha(b)F(b,y(b))|\le \eps,\qquad \text{ for }t\in(b_\eps,b).
		\]
		Then for all $t\in (b_\eps,b)$, by exploiting \eqref{eq:doty}, we can estimate
		\begin{align*}
			&|\dot y(t)-\alpha(b)F(b,y(b))|\\
			=&\left|\Big(\dot y(b_\eps)-\alpha(b)F(b,y(b))\Big)e^{-\int_{b_\eps}^t\frac{1}{\beta(r)}\, dr}+\int_{b_\eps}^t\Big(\alpha(\tau)F(\tau,y(\tau))-\alpha(b)F(b,y(b))\Big)\frac{{\rm d}}{{\rm d}\tau}e^{-\int_{\tau}^t\frac{1}{\beta(r)}\, dr}\, d\tau\right|\\
			\le &\left|\Big(\dot y(b_\eps)-\alpha(b)F(b,y(b)\Big)\right|e^{-\int_{b_\eps}^t\frac{1}{\beta(r)}\, dr}+\eps\left(1- e^{-\int_{b_\eps}^t\frac{1}{\beta(r)}\, dr}\right).
		\end{align*}
		
		We now observe that $\beta^{-1}$ is not summable near $b$ since $\beta$ is Lipschitz continuous in a neighborhood of $b$. Sending $t\to b^-$, we thus deduce
		\[
		\limsup\limits_{t\to b^-}|\dot y(t)-\alpha(b)F(b,y(b))|\le \eps,
		\]
		whence \eqref{eq:vb} follows by the arbitrariness of $\eps$.
		
		In order to show \eqref{eq:va}, we begin by sending $s\to a^+$ in \eqref{eq:doty}, obtaining (recall that now both $\beta^{-1}$ is not summable and $\dot y$ is bounded near $a$)
		\begin{equation}\label{eq:newdoty}
			\dot y(t)=\int_a^t \alpha(\tau)F(\tau,y(\tau))\frac{{\rm d}}{{\rm d}\tau}e^{-\int_{\tau}^t\frac{1}{\beta(r)}\, dr} \, d\tau, \qquad\text{for all }t\in(a,b).
		\end{equation}
		
		We now fix $\eps>0$, and by continuity let $(a,a_\eps)$ be such that
		\[
		|\alpha(t)F(t,y(t))-\alpha(a)F(a,y(a))|\le \eps,\qquad \text{ for }t\in(a,a_\eps).
		\]
		By using \eqref{eq:newdoty} and the fact that $e^{- \int_a^t \frac{1}{\beta(r)}\,dr} = 0$, for all $t\in(a,a_\eps)$ we now infer
		\begin{align*}
			&|\dot y(t)-\alpha(a)F(a,y(a))|=\left|\int_{a}^t\Big(\alpha(\tau)F(\tau,y(\tau))-\alpha(a)F(a,y(a))\Big)\frac{{\rm d}}{{\rm d}\tau}e^{-\int_{\tau}^t\frac{1}{\beta(r)}\, dr}\, d\tau\right|  \le\eps,
		\end{align*}
		and so also \eqref{eq:va} is proved.
	\end{proof}
	
	We conclude this section by showing how a function of the form $\tau\mapsto\frac{1}{\beta(\tau)}e^{-\int_\tau^t\frac{1}{\beta(r)}\,d r}$, appearing in \eqref{eq:doty}, is an approximation of a Dirac delta centered at $t$, as soon as $\beta$ is small enough. 
	\begin{lemma}\label{lemma:apprdelta}
		Let $\{f_n\}$ and $\{g_n\}$ be sequences of continuous functions in $[a,b]$. Assume that $f_n$ uniformly converges to $f$ in $[a,b]$ as $n\to \infty$, and that $g_n$ is positive and vanishes uniformly in $[a,b]$ as $n\to \infty$. Then for all $t\in (a,b]$ there holds
		\begin{equation*}
			\lim\limits_{n\to \infty}\int_a^t \frac{f_n(\tau)}{g_n(\tau)}e^{-\int_\tau^t\frac{1}{g_n(r)}\,d r}\,d\tau=f(t).
		\end{equation*}
	\end{lemma}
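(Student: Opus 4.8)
The plan is to read the kernel
\[
\Phi_n(\tau):=\frac{1}{g_n(\tau)}\,e^{-\int_\tau^t\frac{1}{g_n(r)}\,dr}
\]
as an approximate identity concentrating at $\tau=t$. The crucial elementary observation is that $\Phi_n(\tau)=\frac{\mathrm d}{\mathrm d\tau}e^{-\int_\tau^t\frac{1}{g_n(r)}\,dr}$, so that
\[
\int_a^t\Phi_n(\tau)\,d\tau=1-e^{-\int_a^t\frac{1}{g_n(r)}\,dr}.
\]
Since $g_n>0$ vanishes uniformly, writing $\delta_n:=\max_{[a,b]}g_n\to0$ we get $\int_a^t\frac{1}{g_n(r)}\,dr\ge\frac{t-a}{\delta_n}\to+\infty$ (here $t>a$ because $t\in(a,b]$), hence $\int_a^t\Phi_n\to1$ and, more generally, $\int_c^t\Phi_n(\tau)\,d\tau\le 1$ for every $c\in[a,t]$, with $\int_a^{c}\Phi_n(\tau)\,d\tau=e^{-\int_c^t\frac{1}{g_n(r)}\,dr}-e^{-\int_a^t\frac{1}{g_n(r)}\,dr}\le e^{-(t-c)/\delta_n}$.

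First I would decompose, denoting by $I_n$ the integral in the statement,
\[
I_n-f(t)=\int_a^t\bigl(f_n(\tau)-f(t)\bigr)\Phi_n(\tau)\,d\tau-f(t)\,e^{-\int_a^t\frac{1}{g_n(r)}\,dr},
\]
and note that the second term tends to $0$ by the above. For the first term, fix $\eps>0$; by continuity of $f$ at $t$ together with the uniform convergence $f_n\to f$, choose $\delta\in(0,t-a)$ and $n_0\in\N$ such that $|f_n(\tau)-f(t)|\le\eps$ for all $\tau\in[t-\delta,t]$ and $n\ge n_0$ (note also $\sup_{n\ge n_0}\|f_n\|_{C^0([a,b])}\le\|f\|_{C^0([a,b])}+\eps=:M<\infty$). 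Splitting $\int_a^t=\int_a^{t-\delta}+\int_{t-\delta}^t$ and using $\Phi_n\ge0$, the near part is bounded by $\eps\int_{t-\delta}^t\Phi_n\le\eps$, while the far part is bounded by
\[
\bigl(M+|f(t)|\bigr)\int_a^{t-\delta}\Phi_n(\tau)\,d\tau\le\bigl(M+|f(t)|\bigr)\,e^{-\delta/\delta_n}\xrightarrow[n\to\infty]{}0.
\]
Hence $\limsup_{n\to\infty}|I_n-f(t)|\le\eps$, and letting $\eps\to0$ concludes the proof.

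The proof has no real obstacle; the only point requiring a little care is that $\Phi_n$ carries the factor $1/g_n(\tau)$, which blows up as $n\to\infty$, so one cannot control $\int_a^{t-\delta}\Phi_n$ by a naive pointwise bound. This is circumvented by integrating the exact derivative $\Phi_n=\frac{\mathrm d}{\mathrm d\tau}e^{-\int_\tau^t g_n^{-1}}$, which turns that tail into the genuinely small quantity $e^{-\delta/\delta_n}$.
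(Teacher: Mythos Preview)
Your proof is correct and follows the classical approximate-identity strategy: you split the integral against the kernel $\Phi_n$ into a ``near'' part $[t-\delta,t]$ (controlled by continuity of $f$ at $t$ and uniform convergence $f_n\to f$) and a ``far'' part $[a,t-\delta]$ (controlled by the tail estimate $\int_a^{t-\delta}\Phi_n\le e^{-\delta/\delta_n}\to 0$).

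The paper takes a somewhat different route after the common observation that $\Phi_n=\frac{\mathrm d}{\mathrm d\tau}e^{-\int_\tau^t g_n^{-1}}$. Instead of a near/far split, it first separates $f_n-f(t)$ into $(f_n-f)+(f-f(t))$; the first piece is handled by $\|f_n-f\|_{C^0([a,b])}\cdot\int_a^t\Phi_n\le\|f_n-f\|_{C^0}$, and for the second piece $h(\tau):=f(\tau)-f(t)$ the paper introduces a smooth approximation $h_m\to h$, integrates by parts, and uses the auxiliary fact $\int_a^t e^{-\int_\tau^t g_n^{-1}}\,d\tau\to 0$ to kill the term $\|\dot h_m\|_{C^0}\int_a^t e^{-\int_\tau^t g_n^{-1}}\,d\tau$, before sending $m\to\infty$ and exploiting $h(t)=0$. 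Your argument is more elementary: it avoids the smooth approximation and the integration by parts entirely, and does not need that auxiliary integral fact. The paper's approach, on the other hand, cleanly isolates the role of uniform convergence from the role of continuity at $t$, at the cost of a slightly heavier toolkit.
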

	\begin{proof}
		We begin by observing that for all $t\in (a,b]$ the following facts are true:
		\begin{itemize}
			\item[(a)] $\displaystyle \int_a^t\frac{1}{g_n(\tau)}e^{-\int_\tau^t\frac{1}{g_n(r)}\,d r}\, d\tau=1- e^{-\int_a^t\frac{1}{g_n(r)}\,d r}$;
			\item[(b)]$\displaystyle \lim\limits_{n\to \infty}e^{-\int_a^t\frac{1}{g_n(r)}\,d r}=0 $;
			\item[(c)] $\displaystyle \lim\limits_{n\to \infty}\int_a^te^{-\int_\tau^t\frac{1}{g_n(r)}\,d r}\, d\tau=0 $.
		\end{itemize}
		Identity (a) is simply the Fundamental Theorem of Calculus, while the limits in (b) and (c) can be proved arguing as follows. Since $g_n$ vanishes uniformly, for any $\eps>0$ we definitively have $\|g_n\|_{C^0([a,b])}\le \eps$, whence
		\begin{equation*}
			e^{-\int_a^t\frac{1}{g_n(r)}\,d r}\le e^{-\frac{t-a}{\eps}},\qquad\text{and}\qquad \int_a^te^{-\int_\tau^t\frac{1}{g_n(r)}\,d r}\, d\tau\le \int_a^te^{-\frac{t-\tau}{\eps}}\, d\tau=\eps\left(1-e^{-\frac{t-a}{\eps}}\right).
		\end{equation*}
		Sending first $n\to \infty$ and then $\eps\searrow 0$ we obtain (b) and (c).
		
		We now fix $t\in (a,b]$, and by means of (a) we estimate
		\begin{align}
			&\quad\, \left|\int_a^t \frac{f_n(\tau)}{g_n(\tau)}e^{-\int_\tau^t\frac{1}{g_n(r)}\,d r}\,d\tau-f(t)\right|\nonumber\\
			&\le \int_a^t|f_n(\tau)-f(\tau)|\frac{1}{g_n(\tau)}e^{-\int_\tau^t\frac{1}{g_n(r)}\,d r}\,d\tau+\left|\int_a^t (f(\tau)-f(t))\frac{1}{g_n(\tau)}e^{-\int_\tau^t\frac{1}{g_n(r)}\,d r}\,d\tau\right|+|f(t)|e^{-\int_a^t\frac{1}{g_n(r)}\,d r}\nonumber\\
			&\le \|f_n-f\|_{C^0([a,b])}+|f(t)|e^{-\int_a^t\frac{1}{g_n(r)}\,d r}+\left|\int_a^t (f(\tau)-f(t))\frac{1}{g_n(\tau)}e^{-\int_\tau^t\frac{1}{g_n(r)}\,d r}\,d\tau\right|.\label{eq:3term}
		\end{align}
		The first two terms in the last line above vanish as $n\to \infty$ by assumption and by (b), respectively; so we conclude if we prove that also the third term vanishes.
		
		For the sake of clarity we set $h(\tau):= f(\tau)-f(t)$, and we consider a sequence $\{h_m\}$ of smooth functions uniformly converging to $h$ in $[a,t]$. Denoting by $J_n(t)$ the last term in \eqref{eq:3term}, integrating by parts we infer
		\begin{align*}
			J_n(t)&\le\int_a^t |h(\tau)-h_m(\tau)|\frac{1}{g_n(\tau)}e^{-\int_\tau^t\frac{1}{g_n(r)}\,d r}\,d\tau+\left|\int_a^t h_m(\tau)\frac{1}{g_n(\tau)}e^{-\int_\tau^t\frac{1}{g_n(r)}\,d r}\, d\tau\right|\\
			&\le \|h-h_m\|_{C^0([a,t])}+\left|h_m(t)-h_m(a)e^{-\int_a^t\frac{1}{g_n(r)}\,d r}-\int_a^t \dot h_m(\tau)e^{-\int_\tau^t\frac{1}{g_n(r)}\,d r}\, d\tau\right|\\
			&\le \|h-h_m\|_{C^0([a,t])}+|h_m(t)|+|h_m(a)|e^{-\int_a^t\frac{1}{g_n(r)}\,d r}+\|\dot h_m\|_{C^0([a,t])}\int_a^te^{-\int_\tau^t\frac{1}{g_n(r)}\,d r}\, d\tau.
		\end{align*}
		Letting $n\to \infty$, by using (b) and (c) we deduce
		\begin{equation*}
			\limsup\limits_{n\to \infty} J_n(t)\le \|h-h_m\|_{C^0([a,t])}+|h_m(t)|,
		\end{equation*}
		and sending then $m\to \infty$ we conclude since $h(t)=f(t)-f(t)=0$.
	\end{proof}
	
	\section{Existence of solutions to the discrete system}\label{sec:existdiscrete}
	
	This section is devoted to the proofs of Theorem~\ref{thm:maindiscrete} and Proposition~\ref{prop:nosaturated}.
	
	\begin{proof}[Proof of Theorem~\ref{thm:maindiscrete}]
		The proof is entirely based on the following inductive step.
		
		\emph{If $x_{i+1}\in C^0([0,T])$ is nondecreasing and fulfils $x_{i+1}(0)=x_{i+1}^0$, then there exists $x_i\in C^{0,1}([0,T])$ solution to \eqref{eq:system} in the sense of Definition~\ref{def:sol} satisfying \eqref{eq:uniflipbound}.
			Assuming additionally that $F$ is positive, if $x_{i+1}$ is strictly increasing, then $x_i$ is strictly increasing as well.
		}
		
		It is clear that the inductive step above entails the claim of the theorem. For this reason, what follows is devoted to its proof, which we split in several steps for simplicity.
		
		\textit{\textbf{Step 1. Approximated problem and compactness.}}
		
		\noindent For $\delta>0$ we consider the approximated problem 
		\begin{equation}\label{eq:equazionedelta}
			\begin{cases}
				\eps(\zeta(\rho^\delta_i(t))+\delta)\ddot{x}^\delta_i(t)+\gamma\dot{x}^\delta_i(t)=\vartheta(\rho^\delta_i(t))F(t,x_i^\delta(t)),\qquad \text{for }t\in (0,T),\\
				x_i^\delta(0)=x_i^0,\qquad \dot{x}_i^\delta(0)=\widetilde v_i^0,
			\end{cases}
		\end{equation}
		where $\widetilde v_i^0$ has been introduced in \eqref{eq:vtilde}, while we define $\rho_i^\delta(t):=(Nd_i^\delta(t))^{-1}$ and $d_i^\delta(t)=x_{i+1}(t)-x_i^\delta(t)$. 
		Noting that in~\eqref{eq:equazionedelta} the coefficient of the second derivative is no more degenerate thanks to the addition of the parameter $\delta$, by the Peano Theorem there exists a local solution $x_i^\delta\in C^2([0,\widehat T_\delta])$ for some $\widehat T_\delta>0$, which we will show to be independent of $\delta$.
		
		Without loss of generality, we can assume that $\widehat T_\delta$ is the first time at which $x_i^\delta(\widehat T_\delta)=\frac{x_{i+1}^0+x_i^0}{2}$; indeed, if $x_i^\delta(t)<\frac{x_{i+1}^0+x_i^0}{2}$ for all the evolution, then $x_i^\delta$ exists in the whole $[0,T]$.
		
		By Lemma~\ref{le:lemmadoty} (with $\alpha(t)=\frac 1\gamma \vartheta(\rho_i^\delta(t))$ and $\beta(t)=\frac{\eps}{\gamma}(\zeta(\rho^\delta_i(t))+\delta)$) we deduce that
		\begin{equation*}
			0\le\dot x_i^\delta(t)\le \widetilde v_i^0\vee \left(\frac{\bar\vartheta}{\gamma}\max\limits_{[0,T]\times[x_0^0,x_N(T)]}F\right),\qquad\text{for all }t\in [0,\widehat T_\delta].
		\end{equation*}
		Since from the above estimate the velocity $\dot{x}_i^\delta$ is bounded by a constant independent of $\delta$, and since clearly the distance $x_{i+1}^0-x_i^0$ does not depend on $\delta$, we deduce that $\widehat T_\delta$ does not depend on $\delta$ as well.
		
		Summarising, we have proved that there exists $\widehat T>0$ such that $x_i^\delta\in C^2([0,\widehat T])$ satisfies 
		\begin{equation}\label{eq:ub}
			x_i^0\leq x_i^\delta(t)\leq \frac{x_{i+1}^0+x_i^0}{2},\qquad 0\leq \dot{x}_i^\delta(t)\leq\widetilde v_i^0\vee \left(\frac{\bar\vartheta}{\gamma}\max\limits_{[0,T]\times[x_0^0,x_N(T)]}F\right),\qquad \text{for all }t\in[0,\widehat T].
		\end{equation}
		
		Hence, by Ascoli-Arzel\`a Theorem, we have that, up to passing to a subsequence, $x_i^\delta\rightarrow x_i$ uniformly in $[0,\widehat T]$ as $\delta\to 0$, for a certain nondecreasing function $x_i\in C^{0,1}([0,\widehat T])$, which satisfies
		\begin{equation*}
			0\leq \dot{x}_i(t)\leq\widetilde v_i^0\vee \left(\frac{\bar\vartheta}{\gamma}\max\limits_{[0,T]\times[x_0^0,x_N(T)]}F\right),\qquad \text{for almost every }t\in[0,\widehat T],
		\end{equation*}
		attains the initial condition $x_i(0)=x_i^0$, and fulfils $x_i(t)<x_{i+1}(t)$ in $[0,\widehat T]$.
		Moreover, the fact that the distance between $x_i^\delta$ and $x_{i+1}^\delta$ is uniformly bounded from below and far from $0$ entails a uniform bounds on the densities and, hence, the uniform convergence $\rho_i^\delta \to \rho_i$ in $[0,\hat{T}]$.
		
		\textit{\textbf{Step 2. Existence of a local solution $x_i$.}}
		
		\noindent We now aim to show that the limit function $x_i$ is a local solution of the microscopic traffic model. For the moment, we have seen that points~\ref{hi},~\ref{hii} of Definition~\ref{def:sol} are fulfilled by $x_i$ (in $[0,\widehat T]$). We then need to show that 
		\begin{equation}\label{eq:claim}
			\rho_i(t)\leq \bar \rho_\vartheta,\qquad \text{for all }t\in[0,\widehat T].
		\end{equation}
		For the sake of contradiction, if the claim above is false, then there exists $\overline t \in (0,\widehat T]$ such that $\rho_i(\bar t)>\bar \rho_\vartheta$ (i.e. $d_i(\overline t)<(N\bar \rho_\vartheta)^{-1}$).
		By continuity, there exists $a\in [0,\overline t)$ such that $\rho_i(a)=\bar \rho_\vartheta$ and $\rho_i(t)>\bar \rho_\vartheta$ in $(a,\overline t]$.
		We consider now a sequence $\{a_k\}\subseteq(a,\bar t)$ converging to $a$ as $k\to \infty$. Observe that $\rho_i^\delta(t)\geq \bar \rho_\vartheta$ definitively in $[a_k,\overline t]$, thanks to the uniform convergence of $\rho_i^\delta$ to $\rho_i$ in $[0,\widehat T]$. 
		
		In particular, $\zeta(\rho_i^\delta(t))=\vartheta(\rho_i^\delta(t))=0$ for $t\in [a_k,\overline t]$, and so by \eqref{eq:doty} we obtain
		\begin{equation*}
			\dot x_i^\delta(t)=\dot x_i^\delta(a_k)e^{-\frac{\gamma}{\eps}\frac{t-a_k}{\delta}},\qquad\text{for all }t\in[a_k,\bar t],
		\end{equation*}
		whence $\lim\limits_{\delta\to 0}\dot x_i^\delta(t)=0$ for all $t\in(a_k,\bar t]$.
		
		Since $x_{i+1}$ is nondecreasing, we have \[
		d_i^\delta(\overline t)=x_{i+1}(\bar t)-x_i^\delta(\bar t)\ge x_{i+1}(a_k)-x_i^\delta(a_k)-\int_{a_k}^{\overline t}\dot{x}_i^\delta(\tau)\,d\tau =d_i^\delta(a_k)-\int_{a_k}^{\overline t}\dot{x}_i^\delta(\tau)\,d\tau.
		\]
		Passing to the limit as $\delta\to 0$, by Dominated Convergence Theorem we thus deduce that $d_i(\overline t)\geq d_i(a_k)$ and eventually passing to the limit as $k\to \infty$ (so that $a_k\to a$), we obtain\[
		d_i(\overline t)\geq d_i(a)=\frac{1}{N\rho_i(a)}=\frac{1}{N\bar \rho_\vartheta},
		\]
		which contradicts the fact that $d_i(\overline t)<(N\bar \rho_\vartheta)^{-1}$. Therefore the claim~\eqref{eq:claim} is proved, and so $x_i$ satisfies point~\ref{hiii} of Definition~\ref{def:sol}, too.
		
		Let us now focus on proving points~\ref{hiv} and~\ref{hvi}.
		For all $k\in \N$, we set \[
		A_i^k:=\Big\{t\in [0,\widehat T] : \zeta(\rho_i(t))>\frac1k\Big\},\qquad \text{so that }\qquad \bigcup_{k\in\N}A_i^k=\Big\{t\in [0,\widehat T] : \zeta(\rho_i(t))>0\Big\},
		\]
		namely the set in \eqref{eq:open1}. Clearly, it is enough to prove that $x_i$ solves equation~\eqref{eq:systempos} in each connected component $I_j$ of $A_i^k$, where for the sake of clarity we omit the dependence on $i,k$.
		By uniform convergence, one has $\zeta(\rho_i^\delta)\geq\tfrac{1}{2k}$ in $I_j$ for $\delta$ sufficiently small, thus from the equation we deduce, recalling~\eqref{eq:ub}, that \[
		\eps|\ddot{x}_i^\delta(t)|\le \frac{\bar\vartheta \max\limits_{[0,T]\times[x_0^0,x_N(T)]}F}{\frac{1}{2k}}+\gamma\frac{ \dot{x}_i^\delta(t)}{\frac{1}{2k}}\leq 4k \left[ (\gamma v_i^0)\vee \left({\bar\vartheta}\max\limits_{[0,T]\times[x_0^0,x_N(T)]}F\right)\right],\qquad \text{in }I_j.
		\]
		
		By Ascoli-Arzel\`a Theorem, we now obtain that \[
		\dot{x}_i^\delta\rightarrow \dot{x}_i,\qquad \text{uniformly in $\overline I_j$, as $\delta\to 0$},
		\]
		so that $\dot x_i\in C^{0,1}(\overline I_j)$ and condition~\ref{hvi} is automatically fulfilled by $x_i$, since for unsaturated initial positions one has $0\in A_i^k$ for some $k$.
		
		Now, for all $\varphi\in C^\infty_{\rm c}(\mathring{I}_j)$, recalling that $\zeta (\rho_i^\delta)\geq \tfrac{1}{2k}$ in $I_j$ (for $\delta$ sufficiently small), we can test equation~\eqref{eq:equazionedelta} with $\varphi$, divide by $\zeta(\rho^\delta_i(\tau))+\delta$ and integrate over $I_j$, obtaining\[
		\gamma \int_{I_j}\frac{\dot{x}^\delta_i(\tau)}{\zeta(\rho^\delta_i(\tau))+\delta}\varphi(\tau)\,d\tau=\int_{I_j}\left(\eps\dot{x}^\delta_i(\tau)\dot\varphi(\tau)+\frac{\vartheta(\rho^\delta_i(\tau))}{\zeta(\rho^\delta_i(\tau))+\delta}F(\tau,x_i^\delta(\tau))\varphi(\tau)\right)\,d\tau.
		\]
		Letting $\delta\to 0$, we deduce (recalling that $\zeta(\rho_i)>\tfrac1k$ in $I_j$) 
		\begin{equation*}
			\gamma\int_{I_j}\frac{ \dot{x}_i(\tau)}{\zeta(\rho_i(\tau))}\varphi(\tau)\,d\tau=\int_{I_j}\left(\eps\dot{x}_i(\tau)\dot \varphi(\tau)+\frac{\vartheta(\rho_i(\tau))}{\zeta(\rho_i(\tau))}F(\tau,x_i(\tau))\varphi(\tau)\right)\,d\tau,
		\end{equation*}
		whence, by the arbitrariness of $\varphi$, we infer that $x_i\in C^2(\mathring I_j)$ and that~\ref{hiv} is proved.
		
		It remains only to show that $x_i$ also satisfies condition~\ref{hv}. Observe that it is enough to prove the statement for any open interval $(a,b)$ contained in \eqref{eq:open2}. Since $\rho_i^\delta$ uniformly converges to $\rho_i$ in the whole $[0,\widehat T]$ as $\delta\to 0$, in particular we have
		\begin{equation*}
			\zeta(\rho_i^\delta)\to \zeta(\rho_i)\equiv0,\qquad \text{uniformly in $[a,b]$ as $\delta\to 0$}.
		\end{equation*}
		
		We now recall that formula \eqref{eq:doty} yields
		\begin{equation*}
			\gamma\dot x_i^\delta(t)=\gamma \dot x_i^\delta(a)e^{-\frac\gamma\eps\int_a^t\frac{1}{\zeta(\rho_i^\delta(r))+\delta}\, dr}+\frac \gamma\eps\int_a^t \frac{\vartheta(\rho_i^\delta(\tau))F(\tau,x_i^\delta(\tau))}{\zeta(\rho_i^\delta(\tau))+\delta}e^{-\frac\gamma\eps\int_\tau^t\frac{1}{\zeta(\rho_i^\delta(r))+\delta}\, dr}\, d\tau,\quad\text{for }t\in(a,b).
		\end{equation*}
		So, by exploiting Lemma~\ref{lemma:apprdelta} (see also (b) within its proof) with $f_\delta(t)=\vartheta(\rho_i^\delta(t))F(t,x_i^\delta(t))$ and $g_\delta(t)=\frac \eps\gamma (\zeta(\rho_i^\delta(t))+\delta)$, we deduce that
		\begin{equation*}
			\lim\limits_{\delta\to 0}\gamma \dot x_i^\delta(t)=\vartheta(\rho_i(t))F(t,x_i(t)), \qquad\text{for all }t\in (a,b).
		\end{equation*}
		Fixing $\varphi\in C^\infty_{\rm c}(a,b)$, the above limit implies
		\begin{equation*}
			-\gamma\int_a^b\! x_i(\tau)\dot \varphi(\tau) \,d\tau=\lim\limits_{\delta\to 0}-\gamma\int_a^b\! x_i^\delta(\tau)\dot \varphi(\tau) \,d\tau=\lim\limits_{\delta\to 0}\gamma\int_a^b\! \dot x_i^\delta(\tau)\varphi(\tau) \,d\tau=\int_a^b \!\vartheta(\rho_i(\tau))F(\tau,x_i(\tau)) \varphi(\tau) \,d\tau,
		\end{equation*}
		whence we obtain that $x_i\in C^1(a,b)$ and that equation \eqref{eq:systempos1ord} is fulfilled in $(a,b)$. This yields \ref{hv} and so $x_i$ is a local solution in the sense of Definition~\ref{def:sol}.
		
		\textit{\textbf{Step 3. Extension of the local solution to the whole interval $[0,T]$.}}
		
		\noindent Let $\overline T$ be the maximal time of existence of $x_i$, namely
		\begin{equation}\label{eq:T}
			\overline T=\sup\left\{t\in(0,T] : \begin{gathered}
				x_i\in C^{0,1}([0,t]) \text{ satisfies conditions~\ref{hi}-\ref{hvi} in $[0,t]$,}\\
				\text{and }\dot{x}_i(\tau)\leq\widetilde v_i^0\vee \left(\frac{\bar\vartheta}{\gamma}\max\limits_{[0,T]\times[x_0^0,x_N(T)]}F\right) \text{for almost every $\tau\in[0,t]$}
			\end{gathered}\right\}.
		\end{equation}
		
		We argue for the sake of contradiction, and assume that $\overline T<T$.
		Since $x_{i+1}\in C^0([0,T])$ by inductive assumption, by conditions~\ref{hii} and~\ref{hiii} the following limit exists by monotonicity and is finite:\[
		x_i(\overline T)=\lim_{t\to \overline T^-}x_i(t).
		\]
		Moreover, by~\ref{hiii}, we also deduce that $\rho_i(\overline T)\leq \bar \rho_\vartheta$, so that $x_i(\overline T)<x_{i+1}(\overline T)$. By the uniform bound on the derivative, we finally infer that $x_i\in C^{0,1}([0,\overline T])$ with $\dot{x}_i(\tau)\leq\widetilde v_i^0\vee \left(\frac{\bar\vartheta}{\gamma}\max\limits_{[0,T]\times[x_0^0,x_N(T)]}F\right)$ almost everywhere in $[0,\overline T]$.
		
		If $\rho_i(\overline T)\in[\bar\rho_\zeta,\bar\rho_\vartheta]$, we do not need to give a velocity for $x_i$ at $\overline T$. On the other hand, if $\rho_i(\overline T)<\bar \rho_\zeta$, by continuity it must exist $\widetilde \rho\in (0,\bar \rho_\zeta)$ and $\widetilde T\in(0,\overline T)$ such that $\rho_i(t)\leq \widetilde \rho<\bar \rho_\zeta$ for all $t\in [\widetilde T,\overline T]$.
		Thus, there exists $\widetilde\zeta>0$ such that $\zeta(\rho_i(t))\geq \widetilde \zeta>0$ for all $t\in [\widetilde T,\overline T]$
		and so $x_i$ solves the equation\[
		\eps\zeta(\rho_i(t))\ddot{x}_i(t)+\gamma\dot{x}_i(t)=\vartheta(\rho_i(t))F(t,x_i(t)),\qquad \text{in $(\widetilde T,\overline T)$.} 
		\]
		Since $\zeta(\rho_i(t))\geq \widetilde \zeta>0$ for all $t\in [\widetilde T,\overline T]$, we also deduce that the solution $x_i$ is actually defined and of class $C^2$ in $[\widetilde T,\overline T]$, whence $\dot{x}_i(\overline T)$ exists and is (necessarily) nonnegative and satisfies the bound in~\eqref{eq:T}.
		
		In both the saturated and unsaturated cases it is then possible to check that $x_i\in C^{0,1}([0,\overline T])$ satisfies all conditions~\ref{hi}-\ref{hvi} in $[0,\overline T]$ and thus $\overline T$ actually realizes the maximum in \eqref{eq:T}.
		
		We can now repeat the local existence argument of Steps 1 and 2 with $\overline T$ as starting time, thus finding a solution $\widecheck x_i\in C^{0,1}([\overline T, \widecheck T])$ with initial data $x_i(\overline T)$ and $\dot x_i(\overline T)$, fulfilling conditions~\ref{hi}-\ref{hvi} in $[\overline T,\widecheck T]$ for some $\widecheck T\in(\overline T,T]$ and satisfying
		\begin{equation*}
			\dot{\widecheck x}_i(\tau)\leq{\dot{x}}_i(\overline T)\vee \left(\frac{\bar\vartheta}{\gamma}\max\limits_{[0,T]\times[x_0^0,x_N(T)]}F\right)\le \widetilde v_i^0\vee \left(\frac{\bar\vartheta}{\gamma}\max\limits_{[0,T]\times[x_0^0,x_N(T)]}F\right),\qquad\text{for almost every }\tau\in(\overline T, \widecheck T).
		\end{equation*} 
		By gluing $x_i$ and $\widecheck x_i$ we finally obtain a new Lipschitz continuous function satisfying conditions~\ref{hi}-\ref{hvi} in the whole $[0,\widecheck T]$ and with Lipschitz constant bounded by $\widetilde v_i^0\vee \left(\frac{\bar\vartheta}{\gamma}\max\limits_{[0,T]\times[x_0^0,x_N(T)]}F\right)$. We thus contradict the maximality of $\overline T$, and we conclude Step 3.
		
		\textit{\textbf{Step 4. If $F$ is positive and $x_{i+1}$ is strictly increasing, then $x_i$ is strictly increasing.}}
		
		\noindent Assume by contradiction that there exist $0\le a<b\le T$ such that $x_i(a)=x_i(b)$. Since $x_i$ is nondecreasing, this means that $x_i$ is constant in $[a,b]$. Observe that the interval $(a,b)$ can not intersect the open set $\{\rho_i<\bar\rho_\zeta\}\cup \{\bar\rho_\zeta<\rho_i<\bar\rho_\vartheta\}$, indeed constant functions do not solve equation \eqref{eq:systempos} nor equation \eqref{eq:systempos1ord} due to the positivity of $F$. We thus infer that $(a,b)$ is contained either in the set $\{\rho_i=\bar\rho_\zeta\}$ or in the set $\{\rho_i=\bar\rho_\vartheta\}$. In both cases, by the definition \eqref{eq:rho} of $\rho_i$, one deduces that $x_{i+1}$ is constant in $(a,b)$. We thus contradict the inductive assumption, and we conclude the proof of Step 4 and of the whole theorem.
		
	\end{proof}
	
	We now move to the proof of Proposition~\ref{prop:nosaturated}.
	
	\begin{proof}[Proof of Proposition~\ref{prop:nosaturated}]
		Let $x$ be a solution to the microscopic traffic model fulfilling in addition \eqref{eq:isolatedpoints}. As in the proof of Theorem~\ref{thm:maindiscrete} we argue by induction, showing the following:
		
		\emph{If $x_{i+1}\in C^1([0,T])$ satisfies $\dot x_{i+1}(t)>0$ for all $t\in(0,T]$, then $x_i$ satisfies
			\begin{itemize}
				\item $x_i$ belongs to $C^1([0,T])$;
				\item $\rho_i(t)<\bar \rho_\vartheta$ and $\dot{x}_i(t)>0$ for all $t\in (0,T]$;
				\item $\dot{x}_i(0)=\frac 1\gamma\vartheta(\rho_i^0)F(0,x_i^0)$ if $i\in \Sigma^0$;
				\item if $\bar\rho_\zeta=\bar\rho_\vartheta$, then $x_i$ belongs to $C^2((0,T])$.
			\end{itemize}
		}
		\textit{\textbf{Step 1. $\rho_i(t)<\bar\rho_\vartheta$ for all $t\in(0,T]$.}}
		
		\noindent First of all, let us prove that the set $\{t\in(0,T] : \rho_i(t)<\bar \rho_\vartheta\}$ is nonempty and $0$ is an accumulation point for it. By contradiction, assume there exists an open interval $(0,s)\subseteq \{t\in(0,T] : \rho_i(t)=\bar \rho_\vartheta\}$. Thus, by condition \ref{hv} in Definition~\ref{def:sol}, we infer that $x_i\in C^1(0,s)$ satisfies
		\begin{equation*}
			\gamma\dot x_i(t)=\vartheta (\rho_i(t))F(t,x_i(t))=\vartheta (\bar\rho_\vartheta)F(t,x_i(t))=0,\qquad \text{for }t\in (0,s),
		\end{equation*}
		namely $x_i(t)\equiv x_i^0$ for all $t\in [0,s]$. By definition \eqref{eq:rho} of $\rho_i$, we hence deduce that
		\begin{equation*}
			x_{i+1}(t)=x_i^0+\frac{1}{N\bar\rho_\vartheta},\qquad\text{for }t\in (0,s),
		\end{equation*}
		which contradicts the inductive assumption $\dot x_{i+1}(t)>0$ for all $t\in (0,T]$.
		
		We now prove the claim of Step 1. Assume by contradiction that there exists $b\in (0,T]$ such that $\rho_i(b)=\bar\rho_\vartheta$. Since we proved that $0$ is an accumulation point for the set $\{t\in(0,T] : \rho_i(t)<\bar \rho_\vartheta\}$, there also exists $a\in (0,b)$ satisfying $\rho_i(a)<\bar\rho_\vartheta$. Without loss of generality, we may assume that $b$ is the first point greater than $a$ at which $\rho_i$ reaches the threshold $\bar\rho_\vartheta$, namely $\rho_i(t)<\bar\rho_\vartheta$ for all $t\in (a,b)$ and $\rho_i(b)=\bar\rho_\vartheta$. Equivalently, we may write
		\begin{equation}\label{eq:distb}
			d_i(t)>\frac{1}{N\bar\rho_\vartheta},\quad\text{for all }t\in (a,b),\qquad\text{ and }\qquad  d_i(b)=\frac{1}{N\bar\rho_\vartheta}.
		\end{equation}
		
		If $\bar\rho_\zeta<\bar\rho_\vartheta$, up to possibly choosing a larger $a$, we can assume without loss of generality that $\bar{\rho}_\zeta <  \rho_i(t) < \bar{\rho}_\vartheta$ in $(a,b)$, and so by condition \ref{hv} we know that $x_i\in C^1(a,b)$ satisfies
		\begin{equation*}
			\gamma\dot x_i(t)=\vartheta (\rho_i(t))F(t,x_i(t)),\quad\text{for }t\in (a,b).
		\end{equation*}
		In particular, from the continuity of $\vartheta$ one deduces 
		\begin{equation}\label{eq:derzero}
			\lim_{t\to b^-}\dot{x}_i(t)=0.
		\end{equation}
		
		We now show that we reach the same conclusion also in the case $\bar\rho_\zeta=\bar\rho_\vartheta$. Indeed, by condition \ref{hiv} of Definition~\ref{def:sol}, we know that $x_i\in C^2(a,b)$ satisfies
		\begin{equation*}
			\eps\zeta(\rho_i(t))\ddot x_i(t)+\gamma\dot x_i(t)=\vartheta (\rho_i(t))F(t,x_i(t)),\quad\text{for }t\in (a,b).
		\end{equation*}
		Since $\zeta$ is Lipschitz continuous near $\bar\rho_\zeta$, $x_{i+1}$ is of class $C^1([0,T])$, $x_i$ is Lipschitz continuous in the whole $[0,T]$ and $\rho_i(t)\le \bar\rho_\zeta$, we deduce that the map $t\mapsto \zeta(\rho_i(t))$ is Lipschitz continuous near $b$. We are thus in position to apply Lemma~\ref{le:lemmadotcont} and infer that \eqref{eq:derzero} holds true also in this case.
		
		Summing up, the limit in \eqref{eq:derzero} yields that $\dot{x}_i$ is differentiable from the left in $b$ and $\dot{x}_i^-(b)=0$. In particular, also $d_i$ is differentiable from the left in $b$ and it holds
		\begin{equation}\label{eq:distance}
			\dot{d}_i^-(b)=\dot{x}_{i+1}(b)-\dot{x}_i^-(b)=\dot{x}_{i+1}(b).
		\end{equation}
		
		This leads to a contradiction. Indeed, by \eqref{eq:distb} it must hold $\dot{d}_i^-(b)\leq 0$, which contradicts the inductive assumption on $x_{i+1}$, namely $\dot{x}_{i+1}(b)>0$, by using \eqref{eq:distance}.
		
		Step 1 is thus concluded.
		
		\textit{\textbf{Step 2. $x_i\in C^1([0,T])$ satisfies $\dot{x}_i(t)>0$ for all $t\in (0,T]$.}}
		
		\noindent We distinguish between the cases $\bar\rho_\zeta<\bar\rho_\vartheta$ and $\bar\rho_\zeta=\bar\rho_\vartheta$.
		
		We begin with the former case. Fix $\bar t\in (0,T]$. If $\bar t$ belongs to the set \eqref{eq:open1}, then $x_i$ is differentiable at $\bar t$ and $\dot{x}_i(\bar t)>0$ by the expression \eqref{eq:doty}. If instead $\bar t$ belongs to the set \eqref{eq:open2}, again $x_i$ is differentiable at $\bar t$ and the property $\dot{x}_i(\bar t)>0$ directly follows from the equation \eqref{eq:systempos1ord} (we recall that by Step 1 we know that $\rho_i(\bar t)<\bar\rho_\vartheta$). We just need to check the case $\bar t\in \partial \{t\in [0,T] : \rho_i(t)={{\overline{\rho}}_\zeta}\}$. Since by \eqref{eq:isolatedpoints} that set consists of isolated points, we can find $a<\bar t<b$ such that the intervals $(a,\bar t)$ and $(\bar t, b)$ are contained in one of the open sets in \eqref{eq:open1} or \eqref{eq:open2}. By using directly the first order equation \eqref{eq:systempos1ord}, or by means of Lemma~\ref{le:lemmadotcont}, we deduce that in any case the derivative $\dot x_i(\bar t)$ exists and it holds $\gamma\dot x_i(\bar t)=\vartheta(\bar\rho_\zeta)F(\bar t,x_i(\bar t))>0$. A similar argument shows that $x_i$ is differentiable (from the right) also in $0$ and that $x_i$ is of class $C^1([0,T])$.
		
		The case $\bar\rho_\zeta=\bar\rho_\vartheta$ is simpler. Indeed, by Step 1 we know that $(0,T]\subseteq  \{t\in [0,T] : \rho_i(t)<{{\overline{\rho}}_\zeta}\}$, and so by Definition~\ref{def:sol} we automatically have $x_i\in C^2((0,T])$, and by the expression \eqref{eq:doty} we may infer $\dot{x}_i(t)>0$ for $t\in(0,T]$. If $\rho_i^0<\bar\rho_\zeta$, the same argument yields $x_i\in C^2([0,T])$; otherwise we may use Lemma~\ref{le:lemmadotcont}, deducing that $\dot x_i(0)$ exists, it holds $\gamma\dot x_i(0)=\vartheta(\bar\rho_\vartheta)F(0 ,x_i(0))=0$, and also $x_i$ is of class $C^1([0,T])$.
		
		\textit{\textbf{Step 3. Conclusion of the proof.}}
		
		\noindent We are left to show the last two points of the inductive step. Actually, the last one have been already proved at the end of Step 2. 
		
		So, assume that $i\in \Sigma^0$. If $\rho_i^0>\bar\rho_\zeta$, then necessarily $x_i$ solves the first order equation \eqref{eq:systempos1ord} in a right neighborhood of $0$. This implies $\dot{x}_i(0)=\frac 1\gamma\vartheta(\rho_i^0)F(0,x_i^0)$. If instead $\rho_i^0=\bar\rho_\zeta$, by assumption \eqref{eq:isolatedpoints} there exists a right neighborhood of $0$ in which $x_i$ solves either the second order equation \eqref{eq:systempos} or the first order one \eqref{eq:systempos1ord}. By using Lemma~\ref{le:lemmadotcont} in the former case, or directly by the equation in the second case, one obtains $\dot{x}_i(0)=\frac 1\gamma\vartheta(\rho_i^0)F(0,x_i^0)$, and so we conclude the proof of the proposition.
	\end{proof}
	
	\section{Many-particle limit}\label{sec:manyparticle}
	
	The content of this section is the proof of Theorem~\ref{thm:maincont}, so we tacitly assume its hypotheses. To this aim, we first provide uniform estimates on the discrete interpolants \eqref{eq:definterpolants} which will also be used in the next section to prove Theorem~\ref{thm:vaninert}. We stress that we are assuming that \eqref{eq:zeta=theta} is in force, therefore \eqref{eq:barrho} applies.
	
	\begin{proposition}\label{prop:uniformbounds}
		Under the assumptions of Theorem~\ref{thm:maincont}, the following uniform bounds hold:
		\begin{itemize}
			\item $\|\rho^N\|_{L^\infty((0,T)\times \R)}\le \bar \rho$;
			\item $\|e_k^N\|_{L^\infty((0,T)\times \R)}\le C$ for $k=1,2$;
			\item $\left\|\frac{e_1^N}{\vartheta(\rho^N)}\right\|_{L^1((0,T)\times\R)}\le C$;
		\end{itemize}
		where the constant $C>0$ depends neither on $N$ nor on $\eps$.
		
		In particular, either if $\eps$ is fixed or if $\eps=\eps_N\to 0$, up to nonrelabelled subsequences, as $N\to \infty$ we infer
		\begin{subequations}
			\begin{align}
				& \rho^N\overset{*}{\rightharpoonup} \rho,&& e_k^N\overset{*}{\rightharpoonup}e_k,\qquad\,\,\text{weakly$^*$ in $L^\infty((0,T)\times\R)$, for $k=1,2$};\label{eq:lim1}\\
				&\frac{e_1^N}{\vartheta(\rho^N)}dtdx\overset{*}{\rightharpoonup} \lambda,&& \qquad\qquad\qquad\,\,\,\text{weakly$^*$ in $\mathcal M([0,T]\times\R)$}\label{eq:lim2}.
			\end{align} 
		\end{subequations}
	\end{proposition}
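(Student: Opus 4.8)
The plan is to read every bound off the pointwise behaviour of the microscopic trajectory $x^{(N)}$ supplied by Theorem~\ref{thm:maindiscrete} and Proposition~\ref{prop:nosaturated}, combined with the uniform control on the data in \eqref{eq:initialassumption}. A preliminary observation used throughout: by monotonicity of $x_0^{(N)}$ and $x_N^{(N)}$ together with \eqref{eq:initialassumption}, for every $t\in[0,T]$ all the cells $[x_i^{(N)}(t),x_{i+1}^{(N)}(t))$ lie in the fixed compact window $[s,S]$, so that all four interpolants in \eqref{eq:definterpolants} are supported in $[0,T]\times[s,S]$, uniformly in $N$. Now, $\rho^N(t,\cdot)$ equals $\rho_i^{(N)}(t)$ on the $i$-th cell and vanishes elsewhere, so Proposition~\ref{prop:nosaturated} gives $\rho_i^{(N)}(t)<\bar\rho$, whence $\|\rho^N\|_{L^\infty}\le\bar\rho$. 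Since \eqref{eq:uniflipbound} does not depend on $\eps$, while $\widetilde v_i^0\le C$ by \eqref{eq:initialassumption} and $\max_{[0,T]\times[x_0^{0(N)},x_N^{(N)}(T)]}F\le\max_{[0,T]\times[s,S]}F$, estimate \eqref{eq:uniflipbound} together with the monotonicity of $x_i^{(N)}$ yields $0\le\dot x_i^{(N)}(t)\le C$ for a.e.\ $t$ and all $i=0,\dots,N-1$, with $C$ independent of $i$, $N$ and $\eps$; also $\dot x_N^{(N)}\le C$ directly by \eqref{eq:initialassumption}. Hence $e_1^N=\rho_i^{(N)}\dot x_i^{(N)}\le\bar\rho\,C$ on each cell, and the affine factor appearing in the definition of $e_2^N$ equals, for $x\in[x_i^{(N)},x_{i+1}^{(N)})$, the convex combination $(1-\theta)\dot x_i^{(N)}+\theta\,\dot x_{i+1}^{(N)}$ with $\theta=\tfrac{x-x_i^{(N)}}{x_{i+1}^{(N)}-x_i^{(N)}}\in[0,1)$, hence it too is bounded by $C$ and $e_2^N\le\bar\rho\,C^2$. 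This settles the $L^\infty$ bounds on $\rho^N$, $e_1^N$ and $e_2^N$.

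The key point is the $L^1$ bound, where the hypothesis \eqref{eq:zeta=theta} (hence \eqref{eq:barrho}) is decisive. Integrating in space at fixed $t$ and using $\rho_i^{(N)}(t)\,d_i^{(N)}(t)=1/N$,
\[
\int_\R\frac{e_1^N(t,x)}{\vartheta(\rho^N(t,x))}\,dx=\frac1N\sum_{i=0}^{N-1}\frac{\dot x_i^{(N)}(t)}{\vartheta(\rho_i^{(N)}(t))},
\]
the division being licit because $\rho_i^{(N)}(t)<\bar\rho$, hence $\vartheta(\rho_i^{(N)}(t))>0$, for $t\in(0,T]$ by Proposition~\ref{prop:nosaturated}. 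In the regime $\zeta\equiv\vartheta$ one has $x_i^{(N)}\in C^1([0,T])\cap C^2((0,T])$ and $\eps\,\vartheta(\rho_i)\ddot x_i+\gamma\dot x_i=\vartheta(\rho_i)F(t,x_i)$ on $(0,T)$; dividing by $\vartheta(\rho_i)>0$ and integrating on $(\sigma,T)\subset(0,T)$ produces the cancellation identity
\[
\gamma\int_\sigma^T\frac{\dot x_i^{(N)}(\tau)}{\vartheta(\rho_i^{(N)}(\tau))}\,d\tau=\int_\sigma^T F(\tau,x_i^{(N)}(\tau))\,d\tau-\eps\big(\dot x_i^{(N)}(T)-\dot x_i^{(N)}(\sigma)\big),
\]
in which the degenerate coefficient in front of $\ddot x_i$ has disappeared. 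Sending $\sigma\to0^+$, the left-hand side converges by monotone convergence (its integrand being nonnegative) and the right-hand side by continuity of $F$ and of $\dot x_i^{(N)}$ on $[0,T]$, whence $\gamma\int_0^T\frac{\dot x_i^{(N)}}{\vartheta(\rho_i^{(N)})}\,d\tau\le T\max_{[0,T]\times[s,S]}F+\eps\,\dot x_i^{(N)}(0)\le C$, uniformly in $i$, $N$ and $\eps$ (using $\dot x_i^{(N)}(T)\ge0$ and $\dot x_i^{(N)}(0)\le C$, the latter by \eqref{eq:initialassumption} if $i\notin\Sigma^0$ and by Proposition~\ref{prop:nosaturated} if $i\in\Sigma^0$). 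Summing over $i$ and dividing by $N$ gives $\|e_1^N/\vartheta(\rho^N)\|_{L^1((0,T)\times\R)}\le C$.

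Finally, the $L^\infty$ bounds together with the equi-compact supports give, by Banach--Alaoglu and separability of $L^1$, weak$^*$ sequential precompactness of $\{\rho^N\}$, $\{e_1^N\}$, $\{e_2^N\}$ in $L^\infty((0,T)\times\R)$, while $\{e_1^N/\vartheta(\rho^N)\,dtdx\}$ is a sequence of nonnegative measures with uniformly bounded mass and supports in the fixed compact set $[0,T]\times[s,S]$, hence weak$^*$ precompact in $\mathcal M([0,T]\times \R)$ with a nonnegative, compactly supported limit; a diagonal extraction then selects a single subsequence along which \eqref{eq:lim1}--\eqref{eq:lim2} hold. The step I expect to be the real obstacle is the $L^1$ bound: one must exploit $\zeta\equiv\vartheta$ to eliminate the degenerate coefficient \emph{before} integrating --- otherwise the $\ddot x_i$-term is uncontrolled --- and then handle $\sigma\to0^+$ with care, since $\vartheta(\rho_i)$ may itself vanish at $t=0$ when $\rho_i^{0}=\bar\rho$, so that even the finiteness of $\int_0^T\dot x_i^{(N)}/\vartheta(\rho_i^{(N)})\,d\tau$ is not clear a priori and has to be extracted from the cancellation identity; it is also here that one verifies the bound remains uniform as $\eps\to0$.
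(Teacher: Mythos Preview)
Your proof is correct and follows essentially the same route as the paper: the $L^\infty$ bounds are read off from \eqref{eq:uniflipbound}, \eqref{eq:initialassumption} and $\rho_i<\bar\rho$, while the $L^1$ bound comes from dividing the equation (with $\zeta=\vartheta$) by $\vartheta(\rho_i)$ so that the second-order term becomes a perfect derivative, then integrating in time and averaging in $i$. Your extra care with the limit $\sigma\to0^+$ (justified via monotone convergence, since $\dot x_i/\vartheta(\rho_i)\ge0$) is a legitimate refinement --- the paper simply writes $\int_0^T\ddot x_i\,dt=\dot x_i(T)-\dot x_i(0)$ directly, which tacitly relies on the same limiting argument given that a priori $x_i$ is only $C^2$ on $(0,T]$.
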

	
	\begin{proof}
		By using condition~\ref{hiii} in Definition~\ref{def:sol} it immediately follows that \[
		\|\rho^N\|_{L^\infty((0,T)\times \R)}\leq \bar \rho. 
		\]
		Moreover, the latter bound together with \eqref{eq:uniflipbound}, \eqref{eq:initialassumption} and the very definition of the interpolants \eqref{eq:definterpolants}, imply that
		\[
		\|e_k^N\|_{L^\infty((0,T)\times\R)}\leq C,\qquad \text{ for $k=1,2$. }
		\]
		Finally, we note that, by using also equation \eqref{eq:systempos} (recall that here $\zeta=\vartheta$), there holds \[
		\begin{split}
			\gamma \left\|\frac{e_1^N}{\vartheta(\rho^N)}\right\|_{L^1((0,T)\times\R)}&=\sum_{i=0}^{N-1}\int_0^T\int_{x_i(t)}^{x_{i+1}(t)}\gamma\frac{\rho_i(t)\dot{x}_i(t)}{\vartheta(\rho_i(t))}\,dx\,dt=\frac{1}{N}\sum_{i=0}^{N-1}\int_0^T\gamma\frac{\dot{x}_i(t)}{\vartheta(\rho_i(t))}\,dt\\
			&= \frac1N\sum_{i=0}^{N-1}\int_0^T(F(t,x_i(t))-\eps\ddot{x}_i(t))\,dt\leq {T} \max_{[0,T]\times[s,S]}F-\frac{\eps}{N}\sum_{i=0}^{N-1}(\dot{x}_i(T)-\dot{x}_i(0))\\
			&\leq  {T} \max_{[s,S]}F+{\eps}\sup_{i\notin \Sigma^0}v_i^0 \leq C.
		\end{split}
		\]
		So all the listed uniform bounds are proved. The weak$^*$ convergences in \eqref{eq:lim1} and \eqref{eq:lim2} now directly follow from classical weak$^*$ compactness.
	\end{proof}

	We now compute the remainder terms which appear when plugging the previously introduced discrete interpolants in \eqref{eq:contlimit} (actually in \eqref{eq:weakcontlimit}). We will then prove that such remainders vanish as $N\to \infty$.
	
	To this aim, we will make use several times of the following identity
	\begin{equation}\label{eq:rhodot}
		\dot{\rho}_i(t)=-\frac1N\frac{\dot{x}_{i+1}(t)-\dot{x}_i(t)}{(x_{i+1}(t)-x_i(t))^2}=-\rho_i(t)\frac{\dot{x}_{i+1}(t)-\dot{x}_i(t)}{x_{i+1}(t)-x_i(t)},\quad \text{for $t\in(0,T)$ and $i=0,\ldots,N-1$}.
	\end{equation}
	We also recall the Leibniz rule, namely for $\varphi\in C^1([0,T]\times\R)$ there holds
	\begin{equation}\label{eq:Leibniz}
		\frac{d}{dt}\left(\int_{x_i(t)}^{x_{i+1}(t)}\varphi(t,x)\,dx\right)=\int_{x_i(t)}^{x_{i+1}(t)}\partial_t{\varphi}(t,x)\,dx+\dot{x}_{i+1}(t)\varphi(t,x_{i+1}(t))-\dot{x}_i(t)\varphi(t,x_i(t)).
	\end{equation}
	
	Finally, let us state the following simple summation by parts lemma, which will be used later on.
	\begin{lemma}\label{le:sumparts}
		For any real sequences $\{a_i\}, \{b_i\}$, and for all $N\ge 1$ one has:\[
		\sum_{i=0}^{N-1}(a_{i+1}-a_i)b_i=a_Nb_{N-1}-a_0b_0-\sum_{i=0}^{N-2}(b_{i+1}-b_i)a_{i+1}.
		\]
	\end{lemma}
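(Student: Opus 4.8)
The identity is a purely algebraic manipulation — an instance of Abel's summation by parts — and the plan is to prove it by telescoping rather than by induction (though induction on $N$ works equally well).

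First I would record the pointwise identity
\[
(a_{i+1}-a_i)b_i=(a_{i+1}b_{i+1}-a_ib_i)-a_{i+1}(b_{i+1}-b_i),
\]
valid for every index $i$ and verified by expanding the right-hand side. Summing this over $i=0,\dots,N-1$ splits the left-hand side of the claimed identity into two pieces.

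The first piece $\sum_{i=0}^{N-1}(a_{i+1}b_{i+1}-a_ib_i)$ telescopes to $a_Nb_N-a_0b_0$. For the second piece I would peel off its top ($i=N-1$) term, writing
\[
\sum_{i=0}^{N-1}a_{i+1}(b_{i+1}-b_i)=a_N(b_N-b_{N-1})+\sum_{i=0}^{N-2}a_{i+1}(b_{i+1}-b_i).
\]
Substituting both expressions back, the contribution $a_Nb_N$ cancels and one is left exactly with $a_Nb_{N-1}-a_0b_0-\sum_{i=0}^{N-2}(b_{i+1}-b_i)a_{i+1}$, which is the assertion.

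There is no genuine obstacle here: the argument is routine bookkeeping and could be stated in two lines. The only point deserving a moment's care is that the boundary term is $a_Nb_{N-1}$ rather than $a_Nb_N$; this is precisely what the extraction of the $i=N-1$ summand from the second piece produces, and getting this index right is really the whole content of the lemma — it is what puts the formula in the shape of summation by parts that will be convenient for the discrete interpolant estimates carried out later in the many-particle limit.
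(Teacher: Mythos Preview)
Your proof is correct and is essentially the same approach as the paper's: both are elementary rearrangements amounting to Abel summation. The paper proceeds by splitting $\sum(a_{i+1}-a_i)b_i$ into $\sum a_{i+1}b_i-\sum a_ib_i$ and then peeling off the boundary terms $a_Nb_{N-1}$ and $a_0b_0$ by shifting the index, whereas you first telescope $a_ib_i$ and then extract the $i=N-1$ term; the only cosmetic difference is that your route temporarily introduces $b_N$ (which cancels), while the paper's manipulation never refers to any $b_i$ beyond $i=N-1$.
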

	
	\begin{proof}
		It is enough to compute\[
		\begin{split}
			\sum_{i=0}^{N-1}(a_{i+1}-a_i)b_i=\sum_{i=0}^{N-1}a_{i+1}b_i-\sum_{i=0}^{N-1}a_ib_i=a_Nb_{N-1}+\sum_{i=0}^{N-2}a_{i+1}b_i-a_0b_0-\sum_{i=0}^{N-2}a_{i+1}b_{i+1},
		\end{split}
		\]
		and we conclude.
	\end{proof}
	We start our analysis by showing that the pair $(\rho^N,e_1^N)$ is an almost solution of the transport equation, i.e. the first equation in \eqref{eq:weakcontlimit}.
	\begin{proposition}\label{prop:2}
		For all $\varphi\in C^2([0,T]\times \R)$ with $\varphi(T)\equiv 0$, it holds 
		\begin{equation}\label{eq:ep2}
			\int_0^T\int_{\R}\Big(\rho^N\partial_t{\varphi}+e^N_1\partial_x\varphi\Big)\,dxdt+\int_{\R}\rho^N(0)\varphi(0)\,dx=\frac{\mathcal R^N(\varphi)}{N},
		\end{equation}
		where the term $\mathcal R^N(\varphi)$ is given by \[
		\mathcal R^N(\varphi)=\frac12\sum_{i=0}^{N-1}\int_0^T\Big(\dot{x}_{i+1}(t)-\dot{x}_i(t)\Big)\left(\int_{x_i(t)}^{x_{i+1}(t)}\partial_{xx}\varphi(t,x)\displaystyle\left(\frac{x-x_i(t)}{x_{i+1}(t)-x_i(t))}\right)^2dx-\partial_x\varphi(t,x_{i+1}(t))\right)\,dt.
		\]
	\end{proposition}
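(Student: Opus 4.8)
The plan is to compute directly the left-hand side of \eqref{eq:ep2} using the piece-wise structure of the interpolants \eqref{eq:definterpolants}, and to recognize which terms cancel exactly (yielding $0$) and which ones survive as the remainder $\mathcal R^N(\varphi)/N$. First I would write $\int_0^T\int_\R \rho^N \partial_t\varphi\,dxdt = \sum_{i=0}^{N-1}\int_0^T \rho_i(t)\int_{x_i(t)}^{x_{i+1}(t)}\partial_t\varphi(t,x)\,dx\,dt$, and similarly $\int_0^T\int_\R e_1^N \partial_x\varphi\,dxdt = \sum_{i=0}^{N-1}\int_0^T \rho_i(t)\dot x_i(t)\int_{x_i(t)}^{x_{i+1}(t)}\partial_x\varphi(t,x)\,dx\,dt$. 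For the first of these I would use the Leibniz rule \eqref{eq:Leibniz} to replace $\rho_i(t)\int_{x_i}^{x_{i+1}}\partial_t\varphi\,dx$ by $\rho_i(t)\tfrac{d}{dt}\!\left(\int_{x_i}^{x_{i+1}}\varphi\,dx\right) - \rho_i(t)\big(\dot x_{i+1}\varphi(t,x_{i+1}) - \dot x_i\varphi(t,x_i)\big)$, then integrate by parts in $t$: since $\varphi(T)\equiv 0$ the boundary term at $T$ vanishes, the boundary term at $0$ produces $-\int_\R \rho^N(0)\varphi(0)\,dx$ (which will cancel the last term on the left of \eqref{eq:ep2}), and there remains $-\int_0^T \dot\rho_i(t)\int_{x_i}^{x_{i+1}}\varphi\,dx\,dt$.

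Next I would insert \eqref{eq:rhodot}, i.e. $\dot\rho_i = -\rho_i \tfrac{\dot x_{i+1}-\dot x_i}{x_{i+1}-x_i}$, so this term becomes $+\int_0^T \rho_i\tfrac{\dot x_{i+1}-\dot x_i}{x_{i+1}-x_i}\int_{x_i}^{x_{i+1}}\varphi\,dx\,dt$. Collecting everything, the left-hand side of \eqref{eq:ep2} becomes a sum over $i$ of time integrals of
\[
\rho_i\,\frac{\dot x_{i+1}-\dot x_i}{x_{i+1}-x_i}\int_{x_i}^{x_{i+1}}\!\varphi\,dx \;-\;\rho_i\big(\dot x_{i+1}\varphi(t,x_{i+1})-\dot x_i\varphi(t,x_i)\big)\;+\;\rho_i\dot x_i\int_{x_i}^{x_{i+1}}\!\partial_x\varphi\,dx.
\]
Using $\int_{x_i}^{x_{i+1}}\partial_x\varphi\,dx = \varphi(t,x_{i+1})-\varphi(t,x_i)$, the last two groups combine to $-\rho_i(\dot x_{i+1}-\dot x_i)\varphi(t,x_{i+1})$. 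Since $\rho_i = \tfrac{1}{N(x_{i+1}-x_i)}$, the surviving expression is $\tfrac{1}{N}(\dot x_{i+1}-\dot x_i)\left(\tfrac{1}{(x_{i+1}-x_i)^2}\int_{x_i}^{x_{i+1}}\varphi\,dx - \tfrac{1}{x_{i+1}-x_i}\varphi(t,x_{i+1})\right)$. To match the stated form of $\mathcal R^N(\varphi)$, which features $\partial_{xx}\varphi$ rather than $\varphi$, I would Taylor-expand $\varphi(t,x)$ around $x = x_{i+1}(t)$, or more cleanly perform two integrations by parts in the $x$-variable on $\int_{x_i}^{x_{i+1}}\varphi\,dx$: writing $\varphi(t,x) = \varphi(t,x_{i+1}) + \int_{x_{i+1}}^x \partial_x\varphi(t,s)\,ds$ and iterating, one obtains $\tfrac{1}{(x_{i+1}-x_i)^2}\int_{x_i}^{x_{i+1}}\varphi\,dx - \tfrac{1}{x_{i+1}-x_i}\varphi(t,x_{i+1}) = \tfrac12\int_{x_i}^{x_{i+1}}\partial_{xx}\varphi(t,x)\left(\tfrac{x-x_i}{x_{i+1}-x_i}\right)^2 dx - \tfrac12\partial_x\varphi(t,x_{i+1})$ — this is the elementary identity that needs to be checked carefully (it is just the integral form of Taylor's theorem for the first-order term, rearranged). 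Summing over $i$ and multiplying by the global factor then gives exactly $\mathcal R^N(\varphi)/N$ with the stated integrand (the factor $\tfrac12$ and the two terms inside the parentheses appearing as written).

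The main obstacle is purely bookkeeping: one must be careful with the boundary terms in the time integration by parts (the initial-datum term must come out with the correct sign so it cancels the explicitly written $+\int_\R \rho^N(0)\varphi(0)\,dx$), and one must get the constant $\tfrac12$ and the precise remainder identity right. The requirement $\varphi\in C^2$ (rather than merely $C^1$) enters exactly here, since the remainder involves $\partial_{xx}\varphi$; the regularity in $t$ is only needed to justify the Leibniz rule \eqref{eq:Leibniz} and the integration by parts, for which $C^1$ in $t$ suffices. No compactness or limit argument is required for this proposition — it is an exact algebraic identity valid for every fixed $N$ — so the only real work is the careful manipulation of the piece-wise constant/linear interpolants together with \eqref{eq:Leibniz} and \eqref{eq:rhodot}.
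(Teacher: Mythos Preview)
Your proposal is correct and follows essentially the same route as the paper: expand the left-hand side via the piecewise definitions, apply the Leibniz rule \eqref{eq:Leibniz} and integrate by parts in time, substitute \eqref{eq:rhodot}, and then rewrite the resulting bracket $\tfrac{1}{(x_{i+1}-x_i)^2}\int_{x_i}^{x_{i+1}}\varphi\,dx - \tfrac{1}{x_{i+1}-x_i}\varphi(t,x_{i+1})$ using the Taylor/integration-by-parts identity you state (which the paper calls ``Taylor expansion with integral remainder''). The only cosmetic difference is that the paper combines the $e_1^N$ term with the boundary-endpoint terms from Leibniz before integrating by parts in $t$, whereas you integrate by parts first and combine afterwards; the algebra is the same.
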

	
	\begin{proof}
		Using the definitions of $\rho^N$ and $e_1^N$ we obtain\[
		\begin{split}
			&\int_0^T\int_{\R}\Big(\rho^N\partial_t{\varphi}+e_1^N\partial_x\varphi\Big)\,dxdt\\
			&=\sum_{i=0}^{N-1}\int_0^T\rho_i(t)\left(\int_{x_i(t)}^{x_{i+1}(t)}\partial_t{\varphi}(t,x)\,dx+\dot{x}_i(t)\Big(\varphi(t,x_{i+1}(t))-\varphi(t,x_{i}(t))\Big)\right)\,dt=(*).
		\end{split}
		\]
		By exploiting \eqref{eq:Leibniz} and integrating by parts in time we continue the above equality
		\[
		\begin{split}
			(*)=&-\sum_{i=0}^{N-1}\rho_i^0\int_{x_i^0}^{x_{i+1}^0}\varphi(0,x)\,dx\\
			&-\sum_{i=0}^{N-1}\int_0^T\left(\dot{\rho}_i(t)\int_{x_i(t)}^{x_{i+1}(t)}\varphi(t,x)\,dx+\rho_i(t)\varphi(t,x_{i+1}(t))\Big(\dot{x}_{i+1}(t)-\dot{x}_i(t)\Big)\right)\,dt.
		\end{split}
		\]
		By using \eqref{eq:rhodot}, we now infer
		\[
		\begin{split}
			(*)=&-\int_{\R}\rho^N(0)\varphi(0,x)\,dx\\
			&+\frac1N\sum_{i=0}^{N-1}\displaystyle\int_0^T(\dot{x}_{i+1}(t)-\dot{x}_i(t))\displaystyle\frac{\displaystyle\int_{x_i(t)}^{x_{i+1}(t)}\varphi(t,x)\,dx-\Big({x}_{i+1}(t)-{x}_i(t)\Big)\varphi(t,x_{i+1}(t))}{(x_{i+1}(t)-x_i(t))^2}\,dt.
		\end{split}
		\]
		We can then conclude by using Taylor expansion with integral remainder:\[
		\begin{split}
			&\displaystyle\frac{\displaystyle\int_{x_i(t)}^{x_{i+1}(t)}\varphi(t,x)\,dx-\Big({x}_{i+1}(t)-{x}_i(t)\Big)\varphi(t,x_{i+1}(t))}{(x_{i+1}(t)-x_i(t))^2}\\
			&=\frac12\left(\int_{x_i(t)}^{x_{i+1}(t)}\partial_{xx}\varphi(t,x)\displaystyle\frac{(x-x_i(t))^2}{(x_{i+1}(t)-x_i(t))^2}\,dx-\partial_x\varphi(t,x_{i+1}(t))\right).
		\end{split}
		\]
	\end{proof}
	Let us now show that the numerator $\mathcal R^N(\varphi)$ of the remainder term in \eqref{eq:ep2} is uniformly bounded.
	\begin{proposition}\label{prop:3}
		With the notations and the assumptions of Proposition~\ref{prop:2}, we  have that 
		\begin{equation}\label{eq:boundR}
			|\mathcal R^N(\varphi)|\le  C \Big(  \| \partial_x \varphi\|_{L^\infty((0,T)\times(s,S))} +  \| \partial_{xx} \varphi\|_{L^\infty((0,T)\times(s,S))} \Big), 
		\end{equation}
		where the constant $C$ depends neither on $N$ nor on $\eps$.
	\end{proposition}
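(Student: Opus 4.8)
The plan is to estimate $\mathcal R^N(\varphi)$ term by term in the sum over $i$, exploiting the uniform bounds already at our disposal: the velocities $\dot x_i$ are bounded by a constant independent of $N$ and $\eps$ (by \eqref{eq:uniflipbound} and \eqref{eq:initialassumption}), and all the trajectories $x_i(t)$ stay inside the fixed interval $[s,S]$ (again by \eqref{eq:initialassumption} together with monotonicity and $x_i(t)\le x_N(t)\le x_N(T)\le S$, $x_i(t)\ge x_0^0\ge s$). In particular the points $x_{i+1}(t)$ at which $\partial_x\varphi$ is evaluated, and the intervals $[x_i(t),x_{i+1}(t)]$ over which $\partial_{xx}\varphi$ is integrated, all lie in $(s,S)$, so the $L^\infty((0,T)\times(s,S))$ norms of $\partial_x\varphi$ and $\partial_{xx}\varphi$ are the relevant quantities.

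First I would bound the inner bracket in the definition of $\mathcal R^N(\varphi)$. Since $0\le\frac{x-x_i(t)}{x_{i+1}(t)-x_i(t)}\le 1$ on $[x_i(t),x_{i+1}(t)]$, the integral term is bounded by $(x_{i+1}(t)-x_i(t))\|\partial_{xx}\varphi\|_{L^\infty((0,T)\times(s,S))}$, and the boundary term by $\|\partial_x\varphi\|_{L^\infty((0,T)\times(s,S))}$; hence the whole bracket is bounded by $(x_{i+1}(t)-x_i(t))\|\partial_{xx}\varphi\|_{L^\infty}+\|\partial_x\varphi\|_{L^\infty}$. Next, the factor $\dot x_{i+1}(t)-\dot x_i(t)$ is controlled in absolute value by $2V$, where $V$ is the uniform velocity bound; but it is better to keep its sign structure, since $\dot x_i\ge 0$, and write $|\dot x_{i+1}(t)-\dot x_i(t)|\le \dot x_{i+1}(t)+\dot x_i(t)$. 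Then
\[
|\mathcal R^N(\varphi)|\le \frac12\sum_{i=0}^{N-1}\int_0^T\big(\dot x_{i+1}(t)+\dot x_i(t)\big)\Big((x_{i+1}(t)-x_i(t))\|\partial_{xx}\varphi\|_{L^\infty}+\|\partial_x\varphi\|_{L^\infty}\Big)\,dt.
\]
For the $\|\partial_{xx}\varphi\|_{L^\infty}$ contribution I split $(\dot x_{i+1}+\dot x_i)(x_{i+1}-x_i)$ and sum: using $\dot x_i\le V$, $\sum_i (x_{i+1}(t)-x_i(t))=x_N(t)-x_0(t)\le S-s$, so this part is bounded by $V(S-s)T\|\partial_{xx}\varphi\|_{L^\infty}$. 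For the $\|\partial_x\varphi\|_{L^\infty}$ contribution, the naive estimate $\sum_i(\dot x_{i+1}+\dot x_i)\le 2NV$ carries a factor $N$ — this is the one genuinely delicate point.

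The main obstacle is therefore controlling $\sum_{i=0}^{N-1}\int_0^T(\dot x_{i+1}(t)+\dot x_i(t))\,dt$ without picking up a factor of $N$; so a cruder pointwise bound on $\dot x_{i+1}-\dot x_i$ will not do, and one must use a telescoping/cancellation argument. The point is that $\sum_i(\dot x_{i+1}-\dot x_i)$ telescopes to $\dot x_N-\dot x_0$, which is bounded; so I would go back and treat the $\|\partial_x\varphi\|_{L^\infty}$ term differently, namely apply the summation by parts Lemma~\ref{le:sumparts} with $a_i=\dot x_i(t)$ and $b_i=\partial_x\varphi(t,x_{i+1}(t))$ (or rather reorganize the original sum before estimating), transferring the difference onto $\varphi$: $\partial_x\varphi(t,x_{i+1}(t))-\partial_x\varphi(t,x_i(t))$, whose absolute value is $\le (x_{i+1}(t)-x_i(t))\|\partial_{xx}\varphi\|_{L^\infty}$. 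After this resummation every remaining sum either telescopes (giving boundary terms in $\dot x_0,\dot x_N$, uniformly bounded by \eqref{eq:initialassumption}) or contains a factor $x_{i+1}(t)-x_i(t)$ that makes $\sum_i$ bounded by $S-s$, with the surviving $\dot x$ factors bounded by $V$. Integrating in $t$ over $[0,T]$ and collecting constants yields \eqref{eq:boundR} with $C$ depending only on $V$, $S$, $s$ and $T$ — in particular on neither $N$ nor $\eps$, since $V$ and the bounds in \eqref{eq:initialassumption} are $\eps$-independent. (Indeed, one should note in passing that the $\varphi$ appearing in Proposition~\ref{prop:2} is $C^2$, so $\partial_{xx}\varphi$ is continuous and the above norms are finite; the factor $\tfrac12$ and any combinatorial constants from Lemma~\ref{le:sumparts} are harmless.)
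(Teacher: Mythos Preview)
Your proposal is correct and follows essentially the same strategy as the paper. The only organizational difference is that the paper splits $\mathcal R^N(\varphi)$ into the two pieces $I^N$ (the $\partial_{xx}\varphi$-integral) and $II^N$ (the $\partial_x\varphi$-boundary term) \emph{before} taking absolute values, applying $|\dot x_{i+1}-\dot x_i|\le \dot x_{i+1}+\dot x_i$ only to $I^N$ and keeping the signed difference $\dot x_{i+1}-\dot x_i$ in $II^N$ so that Lemma~\ref{le:sumparts} can be used immediately---exactly the ``go back'' step you arrive at after noticing that the naive $\|\partial_x\varphi\|_\infty$ estimate would cost a factor $N$.
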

	
	\begin{proof}
		By the explicit expression of $\mathcal R^N(\varphi)$ we can estimate\[
		\begin{split}
			|\mathcal R^N(\varphi)|&\leq \frac12\sum_{i=0}^{N-1}\int_0^T\Big(\dot{x}_{i+1}(t)+\dot{x}_i(t)\Big)\int_{x_i(t)}^{x_{i+1}(t)}|\partial_{xx}\varphi(t,x)|\left(\frac{x-x_i(t)}{x_{i+1}(t)-x_i(t)}\right)^2\,dx\,dt\\
			&\hspace{1cm}+\frac12\left|\sum_{i=0}^{N-1}\int_0^T\Big(\dot{x}_{i+1}(t)-\dot{x}_i(t)\Big)\partial_x\varphi(t,x_{i+1}(t))\,dt\right|=:I^N+II^N.
		\end{split}
		\]
		For the sake of clarity, in the following estimates we will use the shortcut $\|\cdot\|_\infty$ in place of the heavy expression $\|\cdot\|_{L^\infty((0,T)\times(s,S))}$. The first term $I^N$ can be bounded by
		\begin{align}
			I^N\le &\frac{\|\partial_{xx}\varphi\|_{\infty}}{2}\sum_{i=0}^{N-1}\int_0^T (\dot{x}_{i+1}(t)+\dot{x}_i(t))({x}_{i+1}(t)-{x}_i(t))\,dt\nonumber\\
			=& \frac{\|\partial_{xx}\varphi\|_{\infty}}{2}\left(\int_0^T (\dot{x}_{N}(t){+}\dot{x}_{N-1}(t))({x}_{N}(t){-}{x}_{N-1}(t))\,dt+\sum_{i=0}^{N-2}\int_0^T (\dot{x}_{i+1}(t){+}\dot{x}_i(t))({x}_{i+1}(t){-}{x}_i(t))\,dt\right)\nonumber\\
			\le &\frac{\|\partial_{xx}\varphi\|_{\infty}}{2}\left[(S{-}s)(x_N(T){-}x_N^0{+}x_{N-1}(T){-}x_{N-1}^0){+}2\sup_{i=0,\ldots,N-1}\|\dot x_i\|_{L^\infty(0,T)}\int_0^T(x_{N-1}(t){-}x_0(t))\, dt\right]\nonumber\\
			\le &\|\partial_{xx}\varphi\|_{\infty}(S-s)\left(S-s+T\sup_{i=0,\ldots,N-1}\|\dot x_i\|_{L^\infty(0,T)}\right).\label{eq:estimate1}
		\end{align}
		Concerning the second term $II^N$ we exploit Lemma~\ref{le:sumparts} obtaining
		\begin{align}\label{eq:estimate2}
			II^N=&\frac 12 \left|\int_0^T\!\!(\dot x_N(t)\partial_x\varphi(t,x_N(t)){-}\dot x_0(t)\partial_x\varphi(t,x_1(t))\,dt{-}\!\sum_{i=0}^{N-2}\!\int_0^T\!\!(\partial_x\varphi(t,x_{i+2}(t){-}\partial_x\varphi(t,x_{i+1}(t)) \dot x_{i+1}(t)\, dt\right|\nonumber\\
			\le &\frac{\|\partial_{x}\varphi\|_{\infty}}{2}(x_N(T)-x_N^0+x_0(T)-x_0^0)+\frac{\|\partial_{xx}\varphi\|_{\infty}}{2}\int_0^T\sum_{i=0}^{N-2}\dot x_{i+1}(t)(x_{i+2}(t)-x_{i+1}(t))\,dt\nonumber\\
			\le &\|\partial_{x}\varphi\|_{\infty}(S-s)+\frac{\|\partial_{xx}\varphi\|_{\infty}}{2}\sup_{i=0,\ldots,N-1}\|\dot x_i\|_{L^\infty(0,T)}\int_0^T(x_N(t)-x_1(t))\,dt\nonumber\\
			\le& (S-s)\left(\|\partial_{x}\varphi\|_{\infty}+T\frac{\|\partial_{xx}\varphi\|_{\infty}}{2}\sup_{i=0,\ldots,N-1}\|\dot x_i\|_{L^\infty(0,T)} \right).
		\end{align}
		By combining \eqref{eq:estimate1} and \eqref{eq:estimate2} we now conclude by means of \eqref{eq:uniflipbound} and \eqref{eq:initialassumption}.
	\end{proof}
	We now focus on the second equation in \eqref{eq:weakcontlimit}.
	\begin{proposition}\label{prop:4}
		For all $\varphi\in C^1([0,T]\times \R)$ with $\varphi(T)\equiv 0$, it holds
		\begin{equation}\label{eq:ep3}
			\int_0^T\int_{\R}\left[\eps(e_1^N\partial_t{\varphi}+e_2^N\partial_x\varphi)+\rho^NF\varphi-\gamma\frac{e_1^N}{\vartheta(\rho^N)}\varphi\right]\,dxdt+\eps\int_{\R}e^N_1(0)\varphi(0)\,dx=\frac{\mathcal S^N(\varphi)}{N},   
		\end{equation}
		where the term $\mathcal S^N(\varphi)$ is given by \[
		\begin{split}
			\mathcal S^N(\varphi)=N\sum_{i=0}^{N-1}\int_0^T\rho_i(t)\int_{x_i(t)}^{x_{i+1}(t)}\Big(F(t,x)-F(t,x_i(t))\Big)\varphi(t,x)\,dxdt.
		\end{split}
		\]
	\end{proposition}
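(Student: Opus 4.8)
The plan is to insert the interpolants \eqref{eq:definterpolants} into the left-hand side of \eqref{eq:ep3}, reduce every integral to a finite sum of time integrals over the moving cells $(x_i(t),x_{i+1}(t))$, and then move each derivative off $\varphi$ by the Leibniz rule \eqref{eq:Leibniz} and by integration by parts in $t$ and in $x$, using the microscopic equation \eqref{eq:systempos} to eliminate the second derivatives $\ddot x_i$. Only one spatial and one temporal derivative of $\varphi$ ever occur, so $\varphi\in C^1$ is indeed enough. It is convenient to abbreviate $\Phi_i(t):=\int_{x_i(t)}^{x_{i+1}(t)}\varphi(t,x)\,dx$.

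First I would treat $\eps\int_0^T\!\!\int_\R e_1^N\partial_t\varphi\,dx\,dt=\eps\sum_i\int_0^T\rho_i\dot x_i\big(\int_{x_i}^{x_{i+1}}\partial_t\varphi\,dx\big)\,dt$: using \eqref{eq:Leibniz} to replace $\int_{x_i}^{x_{i+1}}\partial_t\varphi\,dx$ by $\dot\Phi_i-\dot x_{i+1}\varphi(\cdot,x_{i+1})+\dot x_i\varphi(\cdot,x_i)$ and integrating the $\dot\Phi_i$ term by parts in time, the endpoint at $t=T$ vanishes because $\varphi(T)\equiv0$, while the endpoint at $t=0$ yields $-\eps\int_\R e_1^N(0)\varphi(0)\,dx$, which cancels the $\eps\int_\R e_1^N(0)\varphi(0)\,dx$ term of \eqref{eq:ep3}. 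In the leftover integral of $\frac{d}{dt}(\rho_i\dot x_i)\Phi_i=(\dot\rho_i\dot x_i+\rho_i\ddot x_i)\Phi_i$ I would substitute \eqref{eq:rhodot} for $\dot\rho_i$ and \eqref{eq:systempos} for $\ddot x_i$; here one uses that, under the standing hypotheses, Remark~\ref{rmk:nosaturated} gives $\rho_i(t)<\bar\rho$, hence $\vartheta(\rho_i(t))>0$, for all $t\in(0,T]$, so that \eqref{eq:systempos} may be written as $\eps\rho_i\ddot x_i=\rho_i F(\cdot,x_i)-\gamma\,\rho_i\dot x_i/\vartheta(\rho_i)$ on $(0,T)$. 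Besides the initial term already cancelled, this generates the term $-\sum_i\int_0^T\rho_i F(\cdot,x_i)\Phi_i\,dt$, the term $+\gamma\sum_i\int_0^T\tfrac{\rho_i\dot x_i}{\vartheta(\rho_i)}\Phi_i\,dt$, and three further contributions: $-\eps\sum_i\int_0^T\rho_i\dot x_i\dot x_{i+1}\varphi(\cdot,x_{i+1})\,dt$, $+\eps\sum_i\int_0^T\rho_i\dot x_i^2\varphi(\cdot,x_i)\,dt$, and $+\eps\sum_i\int_0^T\rho_i\dot x_i\tfrac{\dot x_{i+1}-\dot x_i}{x_{i+1}-x_i}\Phi_i\,dt$ (this last one from $-\dot\rho_i$ via \eqref{eq:rhodot}).

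Then I would handle $\eps\int_0^T\!\!\int_\R e_2^N\partial_x\varphi\,dx\,dt$ by integrating by parts in $x$ on each cell. The relevant algebraic identity is that the affine weight in $e_2^N$ takes the value $\dot x_i+\tfrac{\dot x_{i+1}-\dot x_i}{x_{i+1}-x_i}(x_{i+1}-x_i)=\dot x_{i+1}$ at the right endpoint and $\dot x_i$ at the left one, so the boundary contribution is $\dot x_{i+1}\varphi(\cdot,x_{i+1})-\dot x_i\varphi(\cdot,x_i)$ and the interior one is $-\tfrac{\dot x_{i+1}-\dot x_i}{x_{i+1}-x_i}\Phi_i$. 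Multiplying by $\eps\rho_i\dot x_i$ and summing, these cancel — term by term in $i$ — precisely the three further contributions isolated above. Cancelling also the two $\gamma e_1^N/\vartheta(\rho^N)$ contributions (the one just produced against the $-\gamma e_1^N/\vartheta(\rho^N)$ term of \eqref{eq:ep3}), and combining the $\rho^N F\varphi$ term of \eqref{eq:ep3} with the remaining $-\sum_i\int_0^T\rho_i F(\cdot,x_i)\Phi_i\,dt$, the whole left-hand side of \eqref{eq:ep3} collapses to
\[
\sum_{i=0}^{N-1}\int_0^T\rho_i(t)\left(\int_{x_i(t)}^{x_{i+1}(t)}F(t,x)\varphi(t,x)\,dx-F(t,x_i(t))\,\Phi_i(t)\right)dt=\sum_{i=0}^{N-1}\int_0^T\rho_i(t)\int_{x_i(t)}^{x_{i+1}(t)}\big(F(t,x)-F(t,x_i(t))\big)\varphi(t,x)\,dx\,dt,
\]
which is exactly $\mathcal S^N(\varphi)/N$.

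The only genuinely analytic point is the justification of the integration by parts in time, namely that $t\mapsto\rho_i(t)\dot x_i(t)$ is absolutely continuous on the \emph{closed} interval $[0,T]$, so that the endpoint terms at $t=0,T$ are meaningful and no interior contribution is dropped. Proposition~\ref{prop:nosaturated} already gives $\rho_i\in C^1([0,T])$ and $\dot x_i\in C^0([0,T])\cap C^1((0,T])$; the missing ingredient is the integrability of $\ddot x_i$ on $(0,T)$ — equivalently, by the rewritten equation, of $\dot x_i/\vartheta(\rho_i)$ near $t=0$ — which is exactly what is obtained in the course of the proof of Proposition~\ref{prop:uniformbounds} and which upgrades $\dot x_i$, hence $\rho_i\dot x_i$, to $W^{1,1}(0,T)$. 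Apart from this, everything is bookkeeping of the cancellations; in particular, unlike in Proposition~\ref{prop:3}, the summation-by-parts Lemma~\ref{le:sumparts} is not needed, because every cancellation occurs index by index.
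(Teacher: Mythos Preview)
Your proof is correct and follows the same approach as the paper: expand the interpolants, apply the Leibniz rule \eqref{eq:Leibniz} and integrate by parts in time on the $e_1^N\partial_t\varphi$ term, integrate by parts in $x$ on the $e_2^N\partial_x\varphi$ term (exploiting that the affine weight equals $\dot x_{i+1}$ and $\dot x_i$ at the cell endpoints), and use \eqref{eq:systempos} together with \eqref{eq:rhodot} to obtain the index-by-index cancellations. You additionally spell out the justification for the temporal integration by parts at $t=0$ via the $L^1$ bound on $\dot x_i/\vartheta(\rho_i)$ from Proposition~\ref{prop:uniformbounds}, a point the paper leaves implicit.
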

	
	\begin{proof}
		By using the definitions of $e^N_1$ and $e^N_2$ we compute 
		\[
		\begin{split}
			&\int_0^T\int_{\R}\left[\eps(e_1^N\partial_t{\varphi}+e_2^N\partial_x\varphi)+\rho^NF\varphi-\gamma\frac{e_1^N}{\vartheta(\rho^N)}\varphi\right]\,dxdt\\
			&=\sum_{i=0}^{N-1}\int_0^T\Big[\eps\rho_i(t)\dot{x}_i(t)\int_{x_i(t)}^{x_{i+1}(t)}\partial_t{\varphi}(t,x)\,dx\\
			&\hspace{3cm}+\eps \rho_i(t)\dot{x}_i(t)\int_{x_i(t)}^{x_{i+1(t)}} \left(\dot{x}_i(t)+\frac{\dot{x}_{i+1}(t)-\dot{x}_{i}(t)}{x_{i+1}(t)-x_i(t)}(x-x_i(t))\right)\partial_x\varphi(t,x)\,dx\\
			&\hspace{3cm}+\rho_i(t)\int_{x_i(t)}^{x_{i+1}(t)}F(t,x)\varphi(t,x)\,dx-\gamma\rho_i(t)\frac{\dot{x}_i(t)}{\vartheta(\rho_i(t))}\int_{x_i(t)}^{x_{i+1}(t)}\varphi(t,x)\,dx\Big]\,dt=(*).
		\end{split}
		\]
		Exploiting \eqref{eq:rhodot}, \eqref{eq:Leibniz}, and integrating by parts in time the first term of the right-hand side and in space the second one, recalling also that $\varphi(T)\equiv 0$, we can continue the above chain of equalities obtaining
		\[
		\begin{split}
			&(*)=-\eps\sum_{i=0}^{N-1}\rho_i^0\dot{x}_i(0)\int_{x_i^0}^{x_{i+1}^0}\varphi(0,x)\,dx\\
			&\hspace{1cm}-\eps\sum_{i=0}^{N-1}\int_0^T\Big[\Big(\dot{\rho}_i(t)\dot{x}_i(t)+\rho_i(t)\ddot{x}_i(t)\Big)\int_{x_i(t)}^{x_{i+1}(t)}\varphi(t,x)\,dx\\
			&\hspace{3cm}+\rho_i(t)\dot{x}_i(t)\Big(\dot{x}_{i+1}(t)\varphi(t,x_{i+1}(t))-\dot{x}_{i}(t)\varphi(t,x_i(t))\Big)\Big]\,dt\\
			&\hspace{1cm}+\eps\sum_{i=0}^{N-1}\int_0^T\rho_i(t)\dot{x}_i(t)\left(\dot{x}_{i+1}(t)\varphi(t,x_{i+1}(t)){-}\dot{x}_{i}(t)\varphi(t,x_i(t)){-}\frac{\dot{x}_{i+1}(t){-}\dot{x}_i(t)}{x_{i+1}(t){-}x_i(t)}\int_{x_i(t)}^{x_{i+1}(t)}\!\!\!\!\!\varphi(t,x)\,dx\right)\,dt\\
			&\hspace{1cm}+\sum_{i=0}^{N-1}\int_0^T\Big[\rho_i(t)\int_{x_i(t)}^{x_{i+1}(t)}F(t,x)\varphi(t,x)\,dx-\gamma\rho_i(t)\frac{\dot{x}_i(t)}{\vartheta(\rho_i(t))}\int_{x_i(t)}^{x_{i+1}(t)}\varphi(t,x)\,dx\Big]\,dt
		\end{split}
		\]
		\[
		\begin{split}
			&\hspace{-3cm}=-\eps\int_{\R}e_1^N(0,x)\varphi(0,x)\,dx-\sum_{i=0}^{N-1}\int_0^T\rho_i(t)\Big(\eps\ddot{x}_i(t)+\gamma\frac{\dot{x}_i(t)}{\vartheta(\rho_i(t))}\Big)\int_{x_i(t)}^{x_{i+1}(t)}\varphi(t,x)\,dx\\
			&\hspace{1cm}+\sum_{i=0}^{N-1}\int_0^T\rho_i(t)\int_{x_i(t)}^{x_{i+1}(t)}F(t,x)\varphi(t,x)\,dxdt.\\
		\end{split}
		\]
		By using equation \eqref{eq:systempos}, with $\zeta=\vartheta$, and recalling that we may divide by $\vartheta(\rho_i(t))$ since it is positive in $(0,T]$ by Proposition~\ref{prop:nosaturated} and Remark~\ref{rmk:nosaturated} we finally infer
		\[
		\begin{split}
			&\hspace{-3cm}(*)=-\eps\int_{\R}e_1^N(0,x)\varphi(0,x)\,dx+\sum_{i=0}^{N-1}\int_0^T\rho_i(t)\int_{x_i(t)}^{x_{i+1}(t)}\Big(F(t,x)-F(t,x_i(t))\Big)\varphi(t,x)\,dxdt,\\
		\end{split}
		\]
		and we conclude.
	\end{proof}
	We finally prove that also the term $\mathcal S^N(\varphi)$ is uniformly bounded.
	\begin{proposition}\label{prop:5}
		With the notations and the assumptions of Proposition~\ref{prop:4}, we have that
		\begin{equation}\label{eq:boundS}
			|\mathcal S^N(\varphi)|\le L_FT(S-s)\|\varphi\|_{L^\infty((0,T)\times(s,S))},
		\end{equation}
		where $L_F$ denotes the supremum in $[0,T]$ of the Lipschitz constants of the map $x\mapsto F(t,x)$.
	\end{proposition}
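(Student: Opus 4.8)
The plan is a direct pointwise estimate inside the double integral defining $\mathcal S^N(\varphi)$, followed by a telescoping sum over the cells and a trivial integration in time; the factor $N$ appearing in front of $\mathcal S^N$ will be compensated exactly by the identity $N\rho_i(t)d_i(t)=1$, which is nothing but the definition \eqref{eq:rho} of $\rho_i$ (recall $d_i(t)=x_{i+1}(t)-x_i(t)>0$ by \ref{hiii}).

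First I would observe that the spatial integral in $\mathcal S^N(\varphi)$ effectively takes place over $[s,S]$: by monotonicity of each trajectory (condition \ref{hii}) together with \eqref{eq:initialassumption}, for every $i$ and every $t$ one has $s\le x_0^0\le x_0(t)\le x_i(t)\le x_N(t)\le x_N(T)\le S$, hence $[x_i(t),x_{i+1}(t)]\subseteq[s,S]$, so that $|\varphi(t,x)|\le\|\varphi\|_{L^\infty((0,T)\times(s,S))}$ throughout the region of integration. Next, since by the hypotheses of Theorem~\ref{thm:maincont} the map $x\mapsto F(t,x)$ is Lipschitz with constant at most $L_F$ uniformly in $t$, for $x\in[x_i(t),x_{i+1}(t)]$ we get $|F(t,x)-F(t,x_i(t))|\le L_F\,|x-x_i(t)|\le L_F\,d_i(t)$.

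Combining the two bounds and using $\rho_i(t)d_i(t)=\tfrac1N$, for each fixed $i$ and $t$ one estimates
\[
N\rho_i(t)\int_{x_i(t)}^{x_{i+1}(t)}\big|F(t,x)-F(t,x_i(t))\big|\,|\varphi(t,x)|\,dx\le N\rho_i(t)\,L_F\,d_i(t)\,\|\varphi\|_{L^\infty((0,T)\times(s,S))}\,d_i(t)=L_F\,\|\varphi\|_{L^\infty((0,T)\times(s,S))}\,d_i(t).
\]
Summing over $i=0,\dots,N-1$ the cell lengths telescope, $\sum_{i=0}^{N-1}d_i(t)=x_N(t)-x_0(t)\le S-s$; integrating over $t\in(0,T)$ then produces the final factor $T$, which yields precisely \eqref{eq:boundS}.

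There is essentially no serious obstacle in this argument; the only point deserving care is that the supremum of $\varphi$ must be taken over the bounded strip $(0,T)\times(s,S)$ rather than over $(0,T)\times\R$, which is exactly why the uniform confinement \eqref{eq:initialassumption} of the particle trajectories is needed. Everything else reduces to the elementary cancellation $N\rho_i(t)d_i(t)=1$ built into the definition of the discrete density, together with the Lipschitz bound on $F$ and a telescoping sum.
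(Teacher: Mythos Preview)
Your proof is correct and follows essentially the same approach as the paper's: both use $N\rho_i(t)=1/d_i(t)$, the Lipschitz bound $|F(t,x)-F(t,x_i(t))|\le L_F\,d_i(t)$ on each cell, the uniform bound on $\varphi$, and the telescoping identity $\sum_i d_i(t)=x_N(t)-x_0(t)\le S-s$ before integrating in time. Your explicit justification that $[x_i(t),x_{i+1}(t)]\subseteq[s,S]$ is a nice clarification the paper leaves implicit.
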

	
	\begin{proof}
		It is enough to observe that
		\[
		\begin{split}
			{|\mathcal S^N(\varphi)|}&\leq \sum_{i=0}^{N-1}\int_0^T\int_{x_i(t)}^{x_{i+1}(t)}\frac{|F(t,x)-F(t,x_{i}(t))|}{x_{i+1}(t)-x_i(t)}|\varphi(t,x)|\,dxdt\\
			&\leq L_F\|\varphi\|_{L^\infty((0,T)\times(s,S))}\sum_{i=0}^{N-1}\int_0^T(x_{i+1}(t)-x_i(t))\,dt\\
			&=L_F\|\varphi\|_{L^\infty((0,T)\times(s,S))}\int_0^T(x_N(t)-x_0(t))\,dt\leq L_F\|\varphi\|_{L^\infty((0,T)\times\R)}T(S-s).
		\end{split}
		\]
	\end{proof}
	We are now in position to prove Theorem~\ref{thm:maincont}.
	\begin{proof}[Proof of Theorem~\ref{thm:maincont}]
		The uniform bounds on the initial data and on $x_N$ \eqref{eq:initialassumption} together with the uniform estimate in \eqref{eq:uniflipbound} directly yield \eqref{eq:limitsa}, by means of Ascoli-Arzel\`a Theorem. The convergences in \eqref{eq:limitsc} and \eqref{eq:limitsd} have already been proved in Proposition~\ref{prop:uniformbounds}, and similarly one can show \eqref{eq:limitsb}. The statements on the supports then follow by weak lower semicontinuity, since by definition the supports of $\rho^N, e_1^N$ and $e_2^N$ are contained in
		\[
		\bigcup_{t\in[0,T]}\{t\}\times [x_0(t),x_N(t)].
		\]
		
		We are just let to prove that the limit quadruple $(\rho,e_1,e_2,\lambda)$ solves~\eqref{eq:contlimit} in the sense of Definition~\ref{def:solcont}. In particolar, we only need to check condition~\ref{ciii}, since conditions~\ref{ci} and~\ref{cii} are automatically fulfilled because the interpolants \eqref{eq:definterpolants} are nonnegative functions (with uniformly bounded supports).
		To do so, we let $N\to \infty$ in \eqref{eq:ep2} and \eqref{eq:ep3}. By exploiting Propositions~\ref{prop:3} and \ref{prop:5}, together with the known weak convergence of the involved quantities, we deduce that the limit functions solve system \eqref{eq:weakcontlimit} for all $\varphi\in C^2([0,T]\times \R)$ with $\varphi(T)\equiv 0$. A simple density argument now shows that the equation is satisfied also for $\varphi\in C^1([0,T]\times\R)$ with $\varphi(T)\equiv 0$, and so we conclude.
	\end{proof}

	\section{First order approximation} \label{sec:vaninertia}
	
	In this last section we prove Theorem~\ref{thm:vaninert}. We recall that here the parameter $\eps=\eps_N$ vanishes as $N\to \infty$. The strategy of the proof follows the lines of previous section, with the only difference that in the current situation we will show in addition that all terms multiplied by $\eps_N$ actually go to zero in the many-particle limit.
	
	\begin{proof}[Proof of Theorem~\ref{thm:vaninert}]
		Thanks to the assumption \eqref{eq:initialassumption} and by the uniform estimate \eqref{eq:uniflipbound} we deduce that $x_0$ and $x_N$ are bounded in $W^{1,\infty}(0,T)$, hence by Ascoli-Arzel\`a Theorem we infer \eqref{eq:limits4}. In Proposition~\ref{prop:uniformbounds} we already proved the validity of \eqref{eq:limits4c} and \eqref{eq:limits4d}, and similarly also \eqref{eq:limits4b} can be deduced. Moreover, the statements on the sign and on the supports can easily be checked by arguing as in the proof of Theorem~\ref{thm:maincont}.
		
		Proposition~\ref{prop:uniformbounds} also ensures that the sequence $\frac{e_1^N}{\vartheta(\rho^N)}\,dtdx$ weakly$^*$ converges in the sense of measures to some (positive) Radon measure $\lambda$ on $[0,T]\times \R$. In order to conclude, we thus need to show that $\lambda = \frac 1\gamma\rho F\,dtdx$ and prove that $\rho$ and $e_1$ satisfy ~\eqref{eq:CE}. 
		
		To this aim, we first fix $\varphi \in C^2([0,T]\times \R)$ with $\varphi(T) \equiv 0$ and we recall that Proposition~\ref{prop:2} established that 
		\[ \int_0^T\int_{\R}\Big(\rho^N\partial_t{\varphi}+e^N_1\partial_x\varphi\Big)\,dxdt+\int_{\R}\rho^N(0)\varphi(0)\,dx=\frac{\mathcal R^N(\varphi)}{N}.
		\]
		Now, by \eqref{eq:boundR} we deduce that 
		\[  \frac{\mathcal |R^N(\varphi)|}{N} \leq C \frac{\| \partial_x \varphi\|_{L^\infty((0,T)\times(s,S))} + \| \partial_{xx} \varphi\|_{L^\infty((0,T)\times(s,S))}}{N} \, \longrightarrow\, 0,\qquad \text{as }N\to \infty, \]
		thus \eqref{eq:CE} is proved (by density one deduces its validity for $\varphi \in C^1([0,T]\times \R)$).
		
		On the other hand, arguing similarly as in Propositions~\ref{prop:4}, for any $\varphi \in C^1([0,T]\times \R)$ (without the request on the final time condition) we obtain
		\begin{equation}\label{eq:intestep vanishing inertia}
			\begin{aligned}
				\eps_N\int_0^T\int_{\R} (e_1^N\partial_t{\varphi}+e_2^N\partial_x\varphi)\,dxdt &+ \int_0^T\int_{\R} (\rho^NF-\gamma\frac{e_1^N}{\vartheta(\rho^N)})\varphi\,dxdt\\
				&+\eps_N\int_{\R} \big(e^N_1(0)\varphi(0) - e^N_1(T)\varphi(T)\big)\,dx =\frac{\mathcal S^N(\varphi)}{N},
			\end{aligned}
		\end{equation}
		where the remainder $\mathcal S^N(\varphi)/N $ vanishes as $N\to \infty$ due to \eqref{eq:boundS}. By using \eqref{eq:uniflipbound}, the uniform bounds of Proposition~\ref{prop:uniformbounds}, and recalling \eqref{eq:initialassumption}, it is then easy to see that all the term multiplied by $\eps_N$ in \eqref{eq:intestep vanishing inertia} are bounded by a universal constant. Since $\eps_N\to 0$, from the weak$^*$-type convergences $\rho^N\overset{*}{\rightharpoonup}  \rho$ and $\frac{e_1^N}{\vartheta(\rho^N)}\,dtdx\overset{*}{\rightharpoonup} \lambda$, we deduce that for every $\varphi \in C^1([0,T] \times \R)$ the following identity holds true:
		\[ \int_0^T \int_\R \rho F \varphi \, dx dt = \gamma\int_{[0,T] \times \R} \varphi\, d\lambda.  \] 
		Since, by density, the above equality is still true for all $\varphi \in C^0 ([0,T] \times\R)$ we finally infer that $\lambda = \frac 1\gamma\rho F\,dtdx$ and we conclude the proof.
	\end{proof}

	\bigskip
	
	\noindent\textbf{Acknowledgements.}
	We warmly thank Marco Di Francesco for some useful exchanges on the topic of the paper.  
	E. Radici acknowledges the organisers of the Workshop \lq\lq Particle Systems in Dynamics, Optimization, and Learning\rq\rq which took place at the Lagrange Mathematics and Computation Research Center of Paris, where some improvements of an earlier version of this work have been suggested. F. Riva acknowledges the Department of Mathematics ``F. Casorati'' of the University of Pavia, to which he was affiliated while most of the research of this work was carried out. E. Radici and F. Riva have been partially supported by the INdAM-GNAMPA project 2023  CUP\_E53C22001930001.
	D. Mazzoleni has been partially supported by the MUR via PRIN projects P2022R537CS and P2020F3NCPX, and by the INdAM-GNAMPA project 2024 CUP\_E53C23001670001.

	\bigskip

	\bibliography{bibliotraffic2}{}
	\bibliographystyle{siam}

	\medskip
	\small
	\begin{flushleft}
		\noindent \verb"dario.mazzoleni@unipv.it"\\
		Dipartimento di Matematica  ``F. Casorati'', \\
		Universit\`a di Pavia,\\
		via Ferrata 5, 27100 Pavia, Italy\\
		\smallskip
		\noindent \verb"emanuela.radici@univaq.it"\\
		DISIM - Dipartimento di Ingegneria e Scienze dell'Informazione e Matematica, \\
		Universit\`a de L'Aquila, \\
		via Vetoio 1 (Coppito), 67100 L'Aquila, Italy\\
		\smallskip
		\noindent \verb"filippo.riva@unibocconi.it"\\
		Department of Decision Sciences, \\
		Bocconi University,\\
		via Roentgen 1, 20136 Milano, Italy\\
	\end{flushleft}
	
\end{document}